\newcommand{\A}{\mathcal A} 
\newcommand{\V}{\mathcal V} 
\newtheorem*{remark}{Remark}
\DeclareMathOperator*{\argmin}{argmin}
\DeclareMathOperator*{\argmax}{argmax}
\DeclareMathOperator*{\conv}{conv}
\newcommand{\journal}[1]{}
\newcommand{\oldproof}{0} 
\newcommand{\reals}{\mathbb{R}}
\newcommand{\minimize}{\mathop{\textup{minimize}}}
\newcommand{\maximize}{\mathop{\textup{maximize}}}
\newcommand{\st}{\mathop{\textup{subject to}}}
\newcommand{\bopt}{\begin{array}{ll}}
\newcommand{\eopt}{\end{array}}
\newcommand{\beas}{\begin{eqnarray*}}
\newcommand{\eeas}{\end{eqnarray*}}
\newcommand{\bea}{\begin{eqnarray}}
\newcommand{\eea}{\end{eqnarray}}
\newcommand{\beq}{\begin{equation}}
\newcommand{\eeq}{\end{equation}}
\newcommand{\bit}{\begin{itemize}}
\newcommand{\eit}{\end{itemize}}
\newcommand{\ben}{\begin{enumerate}}
\newcommand{\een}{\end{enumerate}}
\title{Limited memory Kelley's Method Converges for Composite Convex and Submodular Objectives}
\author{Song Zhou\thanks{Cornell University, Ithaca, NY (\email{sz557@cornell.edu}).}
\and Swati Gupta\thanks{Georgia Institute of Technology, Atlanta, GA (\email{swatig@gatech.edu}).}
\and Madeleine Udell\thanks{Cornell University, Ithaca, NY (\email{udell@cornell.edu}).}}
\begin{document}

\maketitle

\begin{abstract}
The original simplicial method ({\sc OSM}), a variant of the classic Kelley's cutting plane method,
has been shown to converge to the minimizer of a composite convex and submodular objective,
though no rate of convergence for this method was known. Moreover, {\sc OSM} is required to solve subproblems in each iteration
whose size grows linearly in the number of iterations. We propose a limited memory version of Kelley's method ({\sc L-KM}) and of {\sc OSM} that requires
limited memory (at most $n+1$ constraints for an $n$-dimensional problem) independent of the iteration. We prove convergence for {\sc L-KM} when the convex part of the objective ($g$) is strongly convex
and show it converges linearly when $g$ is also smooth. Our analysis relies on duality between minimization of the composite objective and minimization of a convex function over the corresponding submodular base polytope. We introduce a limited memory version, {\sc L-FCFW},
of the Fully-Corrective Frank-Wolfe ({\sc FCFW}) method with approximate correction,
to solve the dual problem. We show that {\sc L-FCFW} and {\sc L-KM} are dual algorithms
that produce the same sequence of iterates;
hence both converge linearly (when $g$ is smooth and strongly convex) and with limited memory. We propose {\sc L-KM} to minimize composite convex and submodular objectives;
however, our results on {\sc L-FCFW} hold for general polytopes and may be of independent interest.
\end{abstract}

\begin{keywords}
  Kelley's cutting plane method, submodular functions, Lov\'{a}sz extension, Fully corrective Frank-Wolfe, limited memory simplicial method
\end{keywords}

\begin{AMS}
  90C25, 90C27, 90C30
\end{AMS}

\section{Introduction} One of the earliest and fundamental methods to minimize non-smooth convex objectives is Kelley's method, which minimizes the maximum of lower bounds on the convex function given by the supporting hyperplanes to the function at each  previously queried point. An approximate solution to the minimization problem is found by minimizing this piecewise linear approximation, and the approximation is then strengthened by adding the supporting hyperplane at the current approximate solution \cite{Kelley1960,Cheney1959}. Many variants of Kelley's method have been analyzed in the literature \cite[for e.g.]{Lemarechal1995, Kiwiel1995, Drori2016}. Kelley's method and its variants are a natural fit for problem involving a piecewise linear function, such as composite convex and submodular objectives. This paper defines a new limited memory version of Kelley's method adapted to composite convex and submodular objectives, and establishes the first convergence rate for such a method, solving the open problem proposed in \cite{Bach2015,Bach2013}.

Submodularity is a discrete analogue of convexity and has been used to model combinatorial constraints in a wide variety of machine learning applications, such as MAP inference, document summarization, sensor placement, clustering, image segmentation \cite[and references therein]{Bach2013}. Submodular set functions are defined with respect to a ground set of elements $V$, which we may identify with $\{1,\ldots,n\}$ where $|V|=n$. These functions capture the property of diminishing returns: \(F: \{0,1\}^n \rightarrow \mathbb{R}\) is said to be submodular if \(F(A\cup \{e\}) - F(A) \geq F(B \cup \{e\}) - F(B)\) for all \(A \subseteq B \subseteq V\), $e \notin B$. Lov\'{a}sz gave a convex extension $f: [0,1]^n \rightarrow \mathbb{R}$ of the submodular set functions which takes the value of the set function on the vertices of the $[0,1]^n$ hypercube, i.e. $f(\mathbf{1}_{S}) = F(S)$, where $\mathbf{1}_S$ is the indicator vector of the set $S \subseteq V$ \cite{Lovasz1983}. (See Section \ref{sec:background} for details.)

In this work, we propose a variant of Kelley's method,
{\sc Limited Memory Kelley's Method} ({\sc L-KM}),
to minimize the composite convex and submodular objective
\begin{equation}
\label{eq:primal}
\tag{$P$}
\minimize \quad g(x) + f(x),
\end{equation}
where \(g: \mathbb{R}^n \rightarrow \mathbb{R}\) is a closed strongly convex proper function and \(f: \mathbb{R}^n \rightarrow \mathbb{R}\) is the Lov\'{a}sz extension (see Section \ref{sec:background} for details) of a given submodular function \(F: 2^{|E|} \rightarrow \mathbb{R}\).
Composite convex and submodular objectives have been used extensively in sparse learning, where the support of the model must satisfy certain combinatorial constraints.
{\sc L-KM} builds on the {\sc Original Simplicial Method} ({\sc OSM}),
proposed by Bach \cite{Bach2013} to minimize such composite objectives.
At the $i$th iteration, {\sc OSM} approximates the Lov\'{a}sz extension by a piecewise linear function $f_{(i)}$
whose epigraph is the maximum of the supporting hyperplanes to the function at each  previously queried point.
It is natural to approximate the submodular part of the objective by a piecewise linear function,
since the Lov\'{a}sz extension is piecewise linear (with possibly an exponential number of pieces).
{\sc OSM} terminates once the algorithm reaches the optimal solution,
in contrast to a subgradient method, which might continue to oscillate.
Contrast {\sc OSM} with Kelley's Method:
Kelley's Method approximates the full objective function using a piecewise linear function,
while {\sc OSM} only approximates the Lov\'{a}sz extension \(f\) and uses the exact form of $g$.
In \cite{Bach2013}, the authors show that {\sc OSM} converges to the optimum; however, no rate of convergence is given. Moreover, {\sc OSM} maintains an approximation of the Lov\'{a}sz extension by maintaining a set of linear constraints whose size grows linearly with the number of iterations. Hence the subproblems are harder to solve with each iteration.

This paper introduces {\sc L-KM}, a variant of {\sc OSM} that
uses no more than $n+1$ linear constraints in each approximation $f_{(i)}$ (and often, fewer)
and provably converges when $g$ is strongly convex.
When in addition $g$ is smooth, our new analysis of {\sc L-KM} shows that it converges linearly, and,
as a corollary, that {\sc OSM} also converges linearly, which was previously unknown.

To establish this result, we introduce the algorithm {\sc L-FCFW} to solve a problem dual to \eqref{eq:primal}:
\begin{equation}
\label{eq:dual}
\tag{$D$}
\bopt
\maximize & h(w) \\
\st & w \in B(F),
\eopt
\end{equation}
where $h: \mathbb{R}^n \to \mathbb{R}$ is a smooth concave function
and $B(F) \subset \mathbb{R}^n$ is the submodular base polytope corresponding to a given submodular function $F$ (defined below).
We show {\sc L-FCFW} is a limited memory version of the {\sc Fully-Corrective Frank-Wolfe} ({\sc FCFW}) method with approximate correction \cite{Lacoste2015},
and hence converges linearly to a solution of \eqref{eq:dual}.

We show that {\sc L-KM} and {\sc L-FCFW} are \emph{dual algorithms} in the sense that
both algorithms produce the same sequence of primal iterates and lower and upper bounds on the objective.
This connection
immediately implies that {\sc L-KM} converges linearly.
Furthermore, when $g$ is smooth as well as strongly convex,
we can recover the dual iterates of {\sc L-FCFW} from the primal iterates of {\sc L-KM}.




\textbf{Related Work:} The Original Simplicial Method was proposed by Bach (2013) \cite{Bach2013}.
As mentioned earlier, it converges finitely but no known rate of convergence was known before the present work.
In 2015, Lacoste-Julien and Jaggi proved global linear convergence of variants of the Frank-Wolfe algorithm,
including the Fully Corrective Frank-Wolfe ({\sc FCFW}) with approximate correction \cite{Lacoste2015}.
{\sc L-FCFW}, proposed in this paper, can be shown to be a limited memory special case of the latter,
which proves linear convergence of both {\sc L-KM} and {\sc OSM}.

Many authors have studied convergence guarantees and reduced memory requirements for variants of Kelley's method \cite{Kelley1960,Cheney1959}.
These variants are computationally disadvantaged compared to {\sc OSM}
unless these variants allow approximation of only part of the objective.
Among the earliest work on bounded storage in proximal level bundle methods is a paper by Kiwiel (1995) \cite{Kiwiel1995}.
This method projects iterates onto successive approximations of the level set of the objective;
however, unlike our method, it is sensitive to the choice of parameters (level sets)
and oblivious to any simplicial structure:
iterates are often not extreme points of the epigraph of the function.
Subsequent work on the proximal setup uses  trust regions, penalty functions, level sets,
and other more complex algorithmic tools;
we refer the reader to \cite{Makela2002} for a survey on bundle methods.
For the dual problem, a paper by Von Hohenbalken (1977) \cite{Von1977} shares some elements of our proof techniques. However, their results only apply to differentiable objectives and do not bound the memory. Another restricted simplicial decomposition method was proposed by Hearn et. al. (1987) \cite{Hearn1987},
which limits the constraint set by user-defined parameters
(e.g., $r=1$ reduces to the Frank-Wolfe algorithm \cite{FrankWolfe1956}):
it can replace an atom with minimal weight in the current convex combination with a prior iterate of the algorithm, which may be strictly inside the feasible region. In contrast, {\sc L-FCFW} obeys a known upper bound ($n+1$) on the number of vertices, and hence requires no parameter tuning. 

\textbf{Applications:} Composite convex and submodular objectives
have gained popularity over the last few years in a large number of machine learning applications such as structured regularization or empirical risk minimization \cite{Bach2010}: $\min_{w\in \mathbb{R}^n} \sum_{i} l(y_i, w^{\top}x_i) + \lambda \Omega(w)$, where $w$ are the model parameters and $\Omega: \mathbb{R}^n \to \mathbb{R}$ is a regularizer.
The Lov\'{a}sz extension can be used to obtain a convex relaxation of a regularizer that penalizes the support of the solution $w$ to achieve structured sparsity, which improves model interpretable or encodes knowledge about the domain. For example, fused regularization uses $\Omega(w) = \sum_{i}|w_i-w_{i+1}|$, which is the Lov\'{a}sz extension of the generalized cut function, and group regularization uses $\Omega(w) = \sum_{g} d_g \|w_g\|_{\infty}$, which is the Lov\'{a}sz extension of the coverage submodular function. (See Appendix \ref{app:background}, Table \ref{tab:listofproblems} for details on these and other submodular functions.)

Furthermore, minimizing a composite convex and submodular objective is dual to minimizing a convex objective over a submodular polytope (under mild conditions).
This duality is central to the present work.
First-order projection-based methods like online stochastic mirror descent and its variants
require computing a Bregman projection $\min\{\omega(x) + \nabla \omega(y)^{\top}(x-y): x \in P\}$
to minimize a strictly convex function $\omega: \reals^n \to \reals$ over the set $P \subseteq \mathbb{R}^n$.
Computing this projection is often difficult, and prevents practical application of these methods,
though this class of algorithms is known to obtain near optimal convergence guarantees in various settings \cite{Nemirovski1983,Audibert2013}.
Using {\sc L-FCFW} to compute these projections
can reduce the memory requirements in variants of online mirror descent used for learning over spanning trees to reduce communication delays in networks, \cite{Koolen2010}), permutations to model scheduling delays \cite{Yasutake2011}, and k-sets for principal component analysis \cite{Warmuth2006}, to give a few examples of submodular online learning problems. Other example applications of convex minimization over submodular polytopes include computation of densest subgraphs \cite{Nagano2011size}, computation of a lower bound for the partition function of log-submodular distributions \cite{Djolonga2014} and distributed routing \cite{Krichene2015convergence}.

\textbf{Summary of contributions:} We discuss  background and the problem formulations in  \cref{sec:background}.
\cref{sec:lkm} describes {\sc L-KM}, our proposed limited memory version of {\sc OSM},
and shows that {\sc L-KM} converges and solves a problem over $\reals^n$
using subproblems with at most $n+1$ constraints.
We introduce duality between our primal and dual problems in \cref{s:duality}.
\cref{s:solving-dual} introduces a limited memory (and hence faster)
version of Fully-Corrective Frank-Wolfe, {\sc L-FCFW},
and proves linear convergence of {\sc L-FCFW}.
We establish the duality between {\sc L-KM} and {\sc L-FCFW} in \cref{s:primal-from-dual} and
thereby show {\sc L-KM} achieves linear convergence and
{\sc L-FCFW} solves subproblems over no more than $n+1$ vertices.
We present preliminary experiments in Section \ref{sec:experiments} that highlight the reduced memory usage
of both {\sc L-KM} and {\sc L-FCFW} and show that their performance compares favorably with
{\sc OSM} and {\sc FCFW} respectively.

\section{Background and Notation}\label{sec:background}
Consider a ground set \(V\) of $n$ elements on which the submodular function $F:2^V \rightarrow \mathbb{R}$ is defined. The function $F$ is said to be submodular if \(F(A) \mathop{+} F(B) \geq F(A \mathop{\cup} B) \mathop{+} F(A \mathop{\cap} B)\) for all \(A,\ B \subseteq V\).
This is equivalent to the diminishing marginal returns characterization mentioned before. Without loss of generality, we assume \(F(\emptyset) = 0\).
For \(x \in \mathbb{R}^n\), \(A \subseteq V\), we define \(x(A) = \sum_{k \mathop{\in} A} x(k) = \mathbf{1}_{A}^{\top}x \), where \(\mathbf{1}_{A} \in \mathbb{R}^n\) is the indicator vector of \(A\), and let both \(x(k)\) and \(x_k\) denote the $k$th element of \(x\).

Given a submodular set function $F: V \rightarrow \mathbb{R}$, the submodular polyhedron
and the base polytope are defined as \(P(F) = \{w \in \mathbb{R}^n : w(A) \leq F(A),\ \mathop{\mathop{\forall}} A \subseteq V\},\) and
\(B(F) = \{w \in \mathbb{R}^n : w(V) = F(V), w \in P(F)\}\), respectively.
We use $\text{vert}(B(F))$ to denote the vertex set of $B(F)$.
The Lov\'{a}sz extension of $F$
is the piecewise linear function \cite{Lovasz1983}
\begin{equation}
f(x) = \max_{w \mathop{\in} B(F)} w^{\top} x.
\label{LPforLE}
\end{equation}
The Lov\'{a}sz extension can be computed using Edmonds' greedy algorithm for maximizing linear functions over the base polytope (in $O(n \log n + n \gamma)$ time, where $\gamma$ is the time required to compute the submodular function value). This extension can be defined for any set function, however it is convex if and only if the set function is submodular \cite{Lovasz1983}. We call a permutation $\pi$ over $[n]$ \emph{consistent}\footnote{Therefore, the Lov\'{a}sz extension can also be written as
$f(x) = \sum_{k} x_{\pi_k}[F(\{\pi_1, \pi_2, \dots, \pi_k\}) - F(\{\pi_1, \pi_2, \dots, \pi_{k-1}\})]$ where $\pi$ is a permutation consistent with $x$ and $F(\emptyset)=0$ by assumption.}
with $x\in \mathbb{R}^n$ if $x_{\pi_i} \geq x_{\pi_j}$ whenever $i \leq j$.
Each permutation \(\pi\) corresponds to an extreme point
$x_{\pi_k} = F(\{\pi_1, \pi_2, \dots, \pi_k\}) - F(\{\pi_1, \pi_2, \dots, \pi_{k-1}\})$ of the base polytope.
For $x \in \mathbb{R}^n$, let $\mathcal{V}(x)$ be the set of vertices $B(F)$ that correspond to permutations consistent with $x$.

Note that \begin{equation}
\label{lemma:lovasz_attainment}
    \partial f(x) = \mathop{\mathop{\mathrm{conv}}}(\mathcal{V}(x)) = \argmax_{w \mathop{\in} B(F)}w^{\top}x,
\end{equation}
where \(\partial f(x)\) is the subdifferential of \(f\) at \(x\) and conv$(S)$ represents the convex hull of the set $S$.

We assume all convex functions in this paper are closed and proper \cite{Ryu2016}.
Given a convex function \(g:\mathbb{R}^{n} \rightarrow \mathbb{R}\), its Fenchel conjugate \(g^{*}:\mathbb{R}^{n} \rightarrow \mathbb{R}\) is defined as
\begin{equation}
g^{*}(w) \overset{\Delta}{=} \max\limits_{x \mathop{\in} \mathbb{R}^{n}} w^{\top}x - g(x). \label{fenchel}
\end{equation}
Note that when \(g\) is strongly convex, the right hand side of \eqref{fenchel} always has an unique solution, so \(g^*\) is defined for all \(w \in \mathbb{R}^n\). Fenchel conjugates are always convex, regardless of the convexity of the original function. Since we assume \(g\) is closed, \({g^{**}} = g\). Fenchel conjugates satisfy
$(\partial g)^{-1} = \partial g^*$ in the following sense:
\begin{equation}
w \in \partial g(x) \iff g(x) + g^{*}(w) = w^{\top} x \iff x \in \partial g^*(w),
\label{lemma:Fenchelpair}
\end{equation}
where $\partial g(x)$ is the subdifferential of $g$ at \(x\).
When $g$ is $\alpha$-strongly convex and $\beta$-smooth, $g^*$ is $1/\beta$-strongly convex and $1/\alpha$-smooth \cite[Section 4.2]{Ryu2016}. (See Appendix \ref{app:convexity} for details.)

Proofs of all results that do not follow easily from the main text can be found in the appendix.

\if \oldproof
\textbf{Problem Formulation:} We consider a primal-dual pair of convex minimization problems involving a given submodular function \(F: 2^{V} \to \mathbb{R}\). We refer to the first problem as the primal problem and the second as the dual problem.
\journal{
    \footnote{Although \cite{Bach2013} mentions that strong duality holds between the primal and the dual forms, Bach does not discuss attainability of the solutions. 
    We include a proof here for completeness.)}
}
The first problem is to minimize a composite convex and submodular objective:
\begin{equation}
\tag{$P$} 
\minimize~g(x) +f (x) \qquad \mbox{subject to}~x \in \mathbb{R}^n
\label{primal}
\end{equation}
where \(g: \mathbb{R}^n \rightarrow \mathbb{R}\) is a closed\footnote{A function \(g: \mathbb{R}^{n} \rightarrow \mathbb{R}\) is said to be closed iff \(\forall \alpha \mathop{\in} \mathbb{R}\), \{\(x \in \mathop{\mathrm{dom}}(g): f(x) \leq \alpha\)\} is a closed set.} \SZ{strongly convex strongly smooth} convex function, and \(f: \mathbb{R}^n \rightarrow \mathbb{R}\) is the Lov\'{a}sz extension of \(F\). Since indicator functions of convex sets are also convex functions, the primal problem can model constrained minimization problems by setting \(g\) to be the corresponding indicator functions. 
The dual problem is to maximize a concave function over the submodular base polytope $B(F)$:
\begin{equation}
\tag{$D$} 
\maximize~h(w) \qquad \mbox{subject to}~w \in B(F)
\label{dual}
\end{equation}
where \(h: \mathbb{R}^n \rightarrow \mathbb{R}\) is a smooth concave function. 
We have assumed that $g$ and $h$ are both strongly smooth and strongly convex
so that, for any $F$, both the primal and dual problems have a unique solution that is attained.
Solving the primal form \eqref{eq:primal} using general convex optimization methods is challenging due to the non-differentiability of \(f\) at any point $x$ that has at least two elements of the same value. On the other hand, in the case of the dual form, the number of facets of \(B(F)\) can be exponential in the dimension of the polytope\footnote{In fact, there exists a family of matroids defined over ground set of size $n$, such that the extension complexity of convex hull of the bases i.e. the corresponding base polytope is $\Omega(2^{n/2}/n^{5/4 \sqrt{\log(2n)}})$ \cite{Rothvoss2013}.}. 

In \cite{Bach2013}, the duality between (\eqref{eq:primal}) and (\eqref{eq:dual}) was established and studied. 
Here we show strong duality using a standard theorem in convex analysis, 
in contrast to the direct approach of \cite{Bach2013}:
\begin{theorem}[Strong Duality (Theorem 3.3.5, \cite{Borwein2010})]
\label{strong_duality}
Let \(f\) be the Lov\'{a}sz extension of a submodular function \(F: 2^{V} \rightarrow \mathbb{R}\) with \(|V| = n\), let \(g: \mathbb{R}^{n} \rightarrow \mathbb{R}\) be a convex function whose domain has non-empty interior. We have \(p^{\star} = \min_{x \mathop{\in} \mathbb{R}^n} g(x) + f(x) = d^{\star}\), where
\begin{equation}
\tag{$D'$}
\label{dual_new_form_old}
d^{\star} = \maximize~-g^{*}(-w) \qquad \mbox{subject to}~w \in B(F).
\end{equation}
Moreover, both the primal problem \eqref{eq:primal} and the dual problem \eqref{dual_new_form} have finite optimal solutions.
\end{theorem}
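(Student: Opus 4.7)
The plan is to present this as a direct application of Fenchel's duality theorem (Borwein and Lewis, Theorem 3.3.5), with most of the effort going into identifying the Fenchel conjugate of the Lov\'asz extension, verifying the constraint qualification, and then establishing attainment on each side.

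First I would compute $f^{\star}$. Since $f(x) = \max_{w \in B(F)} w^{\top} x$ is the support function of the nonempty compact polytope $B(F)$, the standard pairing between support and indicator functions yields
\[
f^{\star}(w) = \delta_{B(F)}(w),
\]
where $\delta_{B(F)}$ equals $0$ on $B(F)$ and $+\infty$ elsewhere. In particular, $f$ is finite and continuous on all of $\mathbb{R}^n$ because $B(F)$ is bounded.

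Next, I would apply Fenchel--Rockafellar duality to the splitting $g + f$. With the linear map taken to be the identity, the theorem gives
\[
\inf_{x \in \mathbb{R}^n}\bigl\{g(x) + f(x)\bigr\} \;=\; \sup_{w \in \mathbb{R}^n}\bigl\{-g^{\star}(-w) - f^{\star}(w)\bigr\} \;=\; \sup_{w \in B(F)}\bigl\{-g^{\star}(-w)\bigr\},
\]
which is exactly $d^{\star}$. The required constraint qualification $0 \in \operatorname{core}(\operatorname{dom} g - \operatorname{dom} f)$ is immediate because $\operatorname{dom} f = \mathbb{R}^n$, so the assumption that $\operatorname{dom} g$ has nonempty interior is more than enough.

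Finally, I would establish attainment of the optima. On the dual side, $w \mapsto -g^{\star}(-w)$ is upper semicontinuous (as $g^{\star}$ is lower semicontinuous) and $B(F)$ is nonempty and compact, so Weierstrass yields a maximizer; alternatively, Borwein--Lewis already guarantees attainment of the dual supremum when the common value is finite. On the primal side, the paper's standing assumption that $g$ is strongly convex makes $g + f$ strongly convex and hence coercive on $\mathbb{R}^n$, so a unique minimizer exists. The main obstacle I anticipate is keeping the sign conventions aligned between the textbook statement of Fenchel--Rockafellar and the form of $d^{\star}$ used in the theorem; the conjugate computation and the Weierstrass argument themselves are routine.
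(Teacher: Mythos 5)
Your proposal is correct and follows essentially the same route as the paper, which establishes this result by directly invoking the Fenchel duality theorem (Theorem 3.3.5 of Borwein--Lewis) rather than the elementary weak-duality-plus-subdifferential argument it uses elsewhere; you simply make explicit the details the paper leaves to the citation, namely that $f^{\star} = \delta_{B(F)}$, that the constraint qualification holds trivially since $\operatorname{dom} f = \mathbb{R}^n$, and that dual attainment follows from compactness of $B(F)$ (or from the duality theorem itself). Your observation that primal attainment really rests on the paper's standing strong convexity assumption on $g$, not on the hypotheses as literally stated, is accurate and worth keeping.
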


\SZZ{begins}

\begin{theorem}[Strong Duality (Theorem 3.3.5, \cite{Borwein2010})]
\label{strong_duality_old}
Let \(\mathcal{W} \subseteq \mathbb{R}^n\) be a finite set, let \(g: \mathbb{R}^{n} \rightarrow \mathbb{R}\) be a \SZ{closed strongly} convex function. We have \(p^{\star} = d^{\star}\), where 
\begin{equation}
\label{primal_gen_form}
\tag{$P_{\mathcal W}$}
p^{\star} = \min_{x \mathop{\in} \mathbb{R}^n}\{ g(x) + \max_{w \mathop{\in} \mathcal{W}} x^{\top} w\}
\end{equation}
and
\begin{equation}
\tag{$D_{\mathcal W}$}
\label{dual_gen_form}
d^{\star} = \maximize~-g^{*}(-w) \qquad \mbox{subject to}~w \in \conv(\mathcal{W}).
\end{equation}
Moreover, both the primal problem \eqref{primal_gen_form} and the dual problem \eqref{dual_new_form} have finite optimal solutions.
\end{theorem}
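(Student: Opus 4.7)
The plan is to view the primal problem as an instance of Fenchel--Rockafellar duality combined with a minimax exchange. First, I would observe that $\max_{w \in \mathcal{W}} x^{\top} w = \max_{w \in \conv(\mathcal{W})} x^{\top} w$, since a linear function on a polytope attains its maximum at a vertex. Denoting the support function $\sigma(x) := \max_{w \in \conv(\mathcal{W})} x^{\top} w$, the primal becomes $p^{\star} = \min_{x \in \mathbb{R}^n} [g(x) + \sigma(x)]$, which is the standard setup for Fenchel duality paired against $g$ and $\sigma$.

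Next, I would exchange the minimization and maximization. Because $g(x) + x^{\top} w$ is closed convex in $x$ for each fixed $w$, linear (hence concave and continuous) in $w$ for each fixed $x$, and $\conv(\mathcal{W})$ is compact (being the convex hull of a finite set), Sion's minimax theorem yields
\[
p^{\star} = \min_{x \in \mathbb{R}^n} \max_{w \in \conv(\mathcal{W})} \bigl[g(x) + x^{\top} w\bigr] = \max_{w \in \conv(\mathcal{W})} \min_{x \in \mathbb{R}^n} \bigl[g(x) + x^{\top} w\bigr].
\]
The inner minimization evaluates to $\min_x [g(x) + x^{\top} w] = -\max_x [(-w)^{\top} x - g(x)] = -g^{*}(-w)$ by the definition of the Fenchel conjugate in \eqref{fenchel}, so $p^{\star} = \max_{w \in \conv(\mathcal{W})} -g^{*}(-w) = d^{\star}$. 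Alternatively, I could directly invoke Borwein--Lewis's Theorem 3.3.5 with $g$ and $\sigma$, using the identity $\sigma^{*} = \iota_{\conv(\mathcal{W})}$; the constraint qualification is immediate since $\sigma$ is finite on all of $\mathbb{R}^n$.

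Finally, I would establish attainment of both optima. Strong convexity of $g$ makes $g$ coercive, and adding the Lipschitz function $\sigma$ preserves coercivity; hence the closed convex primal objective attains its minimum, and strong convexity forces uniqueness. For the dual, strong convexity of $g$ ensures that $g^{*}$ is finite and continuous on all of $\mathbb{R}^n$ (in fact smooth, by the correspondence recalled in \cref{sec:background}), while $\conv(\mathcal{W})$ is compact, so $-g^{*}(-\cdot)$ attains its maximum on $\conv(\mathcal{W})$ by the extreme value theorem.

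The main obstacle is verifying the regularity conditions that justify the min--max exchange (or, equivalently, the constraint qualification for Fenchel duality). This turns out to be benign in the present setting because $\sigma$ is finite-valued everywhere and $\conv(\mathcal{W})$ is compact, so Sion's theorem applies with no hidden relative-interior hypotheses and no possible duality gap from recession directions. The remainder of the argument is essentially bookkeeping: identifying the inner minimum as a Fenchel conjugate and combining coercivity with compactness to conclude attainment.
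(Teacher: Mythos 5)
Your proof is correct, but it reaches the conclusion by a different route than the paper. The paper's header for this statement simply invokes Theorem 3.3.5 of Borwein--Lewis, and the worked-out argument it actually supplies (for the equivalent subproblem duality, \cref{thm:strong-duality-subproblems}, proved in \cref{app:duality}) is constructive: it first records weak duality via the trivial $\min\max \geq \max\min$ inequality, then takes the unique primal minimizer $\bar{x}$ (whose existence is exactly your coercivity step, packaged as \cref{lemma:uniqueness}), uses $0 \in \partial g(\bar{x}) + \partial f(\bar{x})$ to pick $\bar{w} \in -\partial g(\bar{x}) \cap \partial f(\bar{x})$, and verifies via the Fenchel equality \eqref{lemma:Fenchelpair} together with \eqref{lemma:lovasz_attainment} that $-g^*(-\bar{w}) = g(\bar{x}) + f(\bar{x})$, which certifies both strong duality and dual attainment in one stroke. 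You instead obtain value equality abstractly from Sion's minimax theorem (legitimately: $\conv(\mathcal{W})$ is compact, $g(x) + x^\top w$ is closed convex in $x$ and linear in $w$, and $g$ is finite-valued so no relative-interior qualification is needed), identify the inner minimum as $-g^*(-w)$, and then argue attainment separately by coercivity on the primal side and continuity of $g^*$ on a compact set on the dual side. Both arguments are sound. The paper's approach buys an explicit primal-to-dual map ($\bar w$ as a subgradient certificate), which it reuses later in \cref{thm:primal-from-dual} and in the duality between {\sc L-KM} and {\sc L-FCFW}; your approach buys modularity and avoids the subdifferential sum rule, at the cost of appealing to a general minimax theorem and not exhibiting the dual optimizer explicitly. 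The one point worth making explicit in your write-up is the coercivity claim: strong convexity gives $g(x) \geq g(x_0) + s^\top(x - x_0) + \tfrac{\alpha}{2}\|x - x_0\|^2$, and $\sigma$ is Lipschitz, so the sum indeed grows quadratically and the minimum is attained; as stated this is asserted rather than verified.
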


Let \(\mathcal{W} = B(F)\), we have strong duality between  \eqref{eq:primal} and

\begin{equation}
\tag{$D'$}
\label{dual_new_form}
\maximize~-g^{*}(-w) \qquad \mbox{subject to}~w \in B(F).
\end{equation}

Note that problems \eqref{eq:dual} and \eqref{dual_new_form} have the same form, we refer to both as the dual problem. We now present some results regarding the relationship between the optimal solutions of \eqref{eq:primal} and \eqref{dual2}:


\begin{lemma}
[Primal solution recovers dual solution]
\label{primal_to_dual}
Suppose \(g\) is a \SZ{closed strongly} convex function and and $x^\star$ solves the primal problem \eqref{eq:primal}. Then \(w^\star = -\nabla g(x^\star)\) solves the dual problem \eqref{dual_new_form}.
\end{lemma}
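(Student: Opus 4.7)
The plan is to combine the first-order optimality condition for (P) with Fenchel's equality \eqref{lemma:Fenchelpair} and the attainment characterization \eqref{lemma:lovasz_attainment} of $\partial f$. These three ingredients are exactly what is needed to turn a primal optimizer into a dual optimizer, and strong duality (Theorem 2.2 of the excerpt) closes the loop.

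First I would establish feasibility of $w^\star$. Since $x^\star$ minimizes $g+f$ and $\mathrm{dom}(g)=\mathbb R^n$ so that the subdifferential sum rule applies, we have $0 \in \partial(g+f)(x^\star) = \partial g(x^\star) + \partial f(x^\star)$. Thus $w^\star := -\nabla g(x^\star)$ (interpreted as the unique element of $-\partial g(x^\star)$ guaranteed by this inclusion when $g$ is not differentiable) lies in $\partial f(x^\star)$. By \eqref{lemma:lovasz_attainment}, $\partial f(x^\star) = \mathrm{conv}(\mathcal V(x^\star)) \subseteq B(F)$, so $w^\star$ is dual feasible.

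Next I would show that the dual objective at $w^\star$ equals the primal optimal value. Two pairing identities combine: (a) by \eqref{lemma:Fenchelpair} applied to $-w^\star = \nabla g(x^\star) \in \partial g(x^\star)$,
\begin{equation*}
g(x^\star) + g^*(-w^\star) = (-w^\star)^\top x^\star;
\end{equation*}
(b) by \eqref{lemma:lovasz_attainment}, $w^\star \in \partial f(x^\star)$ means $w^\star$ attains the maximum defining $f(x^\star)$, so $f(x^\star) = w^{\star\top} x^\star$. Adding these two identities cancels the $w^{\star\top} x^\star$ terms and yields $g(x^\star) + f(x^\star) = -g^*(-w^\star)$. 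Since $w^\star \in B(F)$ is feasible and achieves the objective value $p^\star = d^\star$ (by Theorem 2.2), it is a dual optimizer of \eqref{dual_new_form}.

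The main obstacle is really only bookkeeping: justifying the subdifferential sum rule, which is automatic here because $g$ is finite on all of $\mathbb R^n$, and being careful about the meaning of $\nabla g(x^\star)$ when $g$ is strongly convex but not everywhere differentiable (in which case $-w^\star$ should be read as the particular subgradient selected by the optimality inclusion). No quantitative estimates are needed, since the conclusion is purely about optimality rather than a rate.
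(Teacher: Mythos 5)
Your proof is correct and follows essentially the same route as the paper's: use the optimality inclusion $0 \in \partial g(x^\star) + \partial f(x^\star)$ to place $w^\star$ in $\partial f(x^\star)$, invoke \eqref{lemma:lovasz_attainment} to get $f(x^\star) = w^{\star\top}x^\star$ and feasibility, apply the Fenchel equality \eqref{lemma:Fenchelpair} to conclude $-g^*(-w^\star) = g(x^\star)+f(x^\star)$, and finish by duality. Your explicit remarks on the subdifferential sum rule and on feasibility via $\partial f(x^\star)\subseteq B(F)$ are slightly more careful than the paper's version but do not change the argument.
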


\SZZ{begins}

\begin{proof}
We know \(w^{\star}\) is unique from the fact that \(g\) is strongly convex. By the optimality of \(x^{\star}\), we have \(0 \in \partial g (x^{\star}) + \partial f (x^{\star}) =- w^{\star} + \partial f (x^{\star})\). Thus \(w^{\star} \in \partial f(x^{\star})\). and \(f(x^{\star}) = {w^{\star}}^{\top}x^{\star}\) from \eqref{lemma:lovasz_attainment}. Hence Using \eqref{Fenchelpair} from preliminaries, we get
\(-g^{*}(-w^{\star}) \mathop{=} g(x^{\star}) +  {w^{\star}}^{\top}x^{\star} 
\mathop{=} g(x^{\star}) + f(x^{\star})\). Using strong duality, we get \(w^{\star}\) solves the dual problem \eqref{dual_new_form}.
\end{proof}

\SZZ{ends}

\begin{lemma}[Dual solution recovers primal solution]
\label{dual_to_primal} 
Suppose \(g\) is a \SZ{closed strongly convex strongly smooth} function and \(w^{\star} \in B(F)\) solves the dual problem \eqref{dual_new_form}. Then \(x^{\star} = \nabla g^{*}(-w^{\star})\) solves the primal problem \eqref{eq:primal}.
\end{lemma}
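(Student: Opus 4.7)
The plan is to verify the optimality condition $0 \in \partial(g+f)(x^\star) = \partial g(x^\star) + \partial f(x^\star)$ by writing $0 = -w^\star + w^\star$ and checking separately that $-w^\star \in \partial g(x^\star)$ and $w^\star \in \partial f(x^\star)$. Because $g$ is assumed strongly convex and smooth, its conjugate $g^\star$ is differentiable everywhere, so $x^\star = \nabla g^\star(-w^\star)$ is well-defined; this is the starting point.

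First I would handle the $g$-side. By the Fenchel pair relation \eqref{lemma:Fenchelpair}, $x^\star = \nabla g^\star(-w^\star) \in \partial g^\star(-w^\star)$ is equivalent to $-w^\star \in \partial g(x^\star)$. That is the first inclusion we need, and it is essentially free from the hypothesis defining $x^\star$.

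Next I would handle the $f$-side, which is the main content of the argument. The task is to show that $w^\star$ attains the maximum in $f(x^\star) = \max_{w \in B(F)} w^\top x^\star$, because by \eqref{lemma:lovasz_attainment} this is exactly the condition $w^\star \in \partial f(x^\star)$. To get this, I would use first-order optimality of $w^\star$ for the (concave) dual problem \eqref{dual_new_form}: for any $w \in B(F)$ and $t \in [0,1]$, the perturbation $w^\star + t(w - w^\star)$ lies in $B(F)$ by convexity, and differentiating $-g^\star(-(w^\star + t(w-w^\star)))$ at $t=0$ gives $\nabla g^\star(-w^\star)^\top(w-w^\star) = (x^\star)^\top(w - w^\star)$. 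Since $w^\star$ is a maximizer, this directional derivative must be nonpositive, so $(x^\star)^\top w \leq (x^\star)^\top w^\star$ for all $w \in B(F)$, which is exactly $w^\star \in \arg\max_{w \in B(F)} w^\top x^\star = \partial f(x^\star)$.

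Combining the two inclusions gives $0 = -w^\star + w^\star \in \partial g(x^\star) + \partial f(x^\star) \subseteq \partial(g+f)(x^\star)$, so $x^\star$ satisfies the optimality condition for the convex primal problem \eqref{eq:primal} and is therefore a primal optimum. The subtle step is the $f$-side: it requires recognizing that the conjugate's differentiability lets us convert optimality of $w^\star$ in the dual into a linear optimality condition whose maximizer set is precisely $\partial f(x^\star)$ via \eqref{lemma:lovasz_attainment}. Everything else is either a direct application of the Fenchel pair identity \eqref{lemma:Fenchelpair} or the additivity of subdifferentials (which holds under our standing assumptions on $g$, since $\mathrm{dom}\,g = \mathbb{R}^n$ when $g$ is smooth, so there is no constraint qualification issue).
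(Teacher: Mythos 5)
Your proof is correct. The central step---using first-order optimality of $w^\star$ for the smooth concave dual objective, whose gradient at $w^\star$ is exactly $x^\star = \nabla g^*(-w^\star)$, to conclude that $w^\star \in \argmax_{w\in B(F)} w^\top x^\star = \partial f(x^\star)$ via \eqref{lemma:lovasz_attainment}---is the same step the paper takes. Where you diverge is in how you finish. The paper converts its two facts into an equality of objective values, $g(x^\star)+f(x^\star) = -g^*(-w^\star)$, using the \emph{equality} form of the Fenchel pair relation \eqref{lemma:Fenchelpair}, and then appeals to strong duality: a primal point whose objective value equals the dual optimal value must be primal optimal. You instead use the \emph{subgradient} form of \eqref{lemma:Fenchelpair} to get $-w^\star \in \partial g(x^\star)$ and exhibit $0 = -w^\star + w^\star$ as a certificate that $0 \in \partial g(x^\star) + \partial f(x^\star) \subseteq \partial(g+f)(x^\star)$. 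Your route is self-contained in that it does not require the strong duality theorem as a prerequisite, and you correctly observe that the only inclusion you need between sums of subdifferentials is the trivial direction, so no constraint qualification is at issue. The paper's route is marginally shorter given that strong duality is already available, and it makes explicit that $x^\star$ attains the common optimal value $p^\star = d^\star$. Both arguments are valid; there is no gap in yours.
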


\SZZ{begins}

\begin{proof}
Since \(x^{\star} = \nabla g^{*}(-w^{\star})\) and \(w^{\star}\) solves the dual problem, we have \(w^{\star}\) is optimal to \(\argmax_{w \mathop{\in} B(F)}(-w)^{\top}(-x^{\star}) = \argmax_{w \mathop{\in} B(F)}w^{\top}x^{\star}\), and \(f(x^{\star}) = {w^{\star}}^{\top}x^{\star}\). Note that \(g\) is closed and \(-x^{\star} = -g^{*}(-w^{\star})\), we have \(g(x^{\star}) = g^{**}(x^{\star}) = -g^{*}(-w^{\star}) + {w^{\star}}^{\top}(-x^{\star}) = -g^{*}(-w^{\star}) - f(x^{\star})\) from \eqref{Fenchelpair}. Thus \(x^{\star}\) solves the primal problem \eqref{eq:primal} from strong duality.
\end{proof}



\SZZ{ends}

\mnote{Prove these.}

\mnote{No need for weak duality, since we already have strong. Delete (or, save for journal version).}
As shown in \cite{Bach2013}, using the definitions of the Lov\'{a}sz extension and the Fenchel conjugates, it is easy to see that weak duality always holds between problem \eqref{eq:primal} and \eqref{eq:dual}. We include a proof of \cref{weakduality} in Appendix \ref{app:duality} for completeness.

\begin{lemma}[Weak Duality] \label{weakduality}
Let \(f\) be the Lov\'{a}sz extension of a submodular function \(F: 2^{V} \rightarrow \mathbb{R}\) with \(|V| = n\) and let \(g: \mathbb{R}^n \rightarrow \mathbb{R}\) be a convex function. We have \(p^{\star} = \minimize_{x \mathop{\in} \mathbb{R}^n} g(x) + f(x) \geq d^{\star}\), where
\begin{equation}
\tag{$D^\prime$}
\label{dual2}
d^{\star} = \maximize~-g^{*}(-w) \qquad \mbox{subject to}~w \in B(F)
\end{equation}
\end{lemma}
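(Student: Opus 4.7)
The plan is short: weak duality will follow by combining the LP representation of the Lov\'{a}sz extension with the elementary max-min inequality and the definition of the Fenchel conjugate. No structural property of $g$ beyond what is needed to make the objects well-defined is required.

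First I would substitute the formula $f(x) = \max_{w \in B(F)} w^{\top} x$ from \eqref{LPforLE} into the primal to rewrite
\[
p^{\star} = \min_{x \in \mathbb{R}^n} \max_{w \in B(F)} \bigl(g(x) + w^{\top} x\bigr).
\]
Next, I would invoke the generic max-min inequality $\min_x \max_w \phi(x,w) \geq \max_w \min_x \phi(x,w)$, which holds for any real-valued $\phi$ without convexity assumptions, to obtain
\[
p^{\star} \geq \max_{w \in B(F)} \min_{x \in \mathbb{R}^n} \bigl(g(x) + w^{\top} x\bigr).
\]
Finally, I would identify the inner minimum with a Fenchel conjugate value using \eqref{fenchel}: for each fixed $w$,
\[
\min_{x \in \mathbb{R}^n} \bigl(g(x) + w^{\top} x\bigr) = -\max_{x \in \mathbb{R}^n} \bigl((-w)^{\top} x - g(x)\bigr) = -g^{*}(-w),
\]
so combining the three displays yields $p^{\star} \geq \max_{w \in B(F)} -g^{*}(-w) = d^{\star}$, as claimed.

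There is no real obstacle. The argument is entirely independent of strong convexity or smoothness of $g$; those properties only become relevant when proving the reverse inequality (strong duality) or when one wants to guarantee that the primal and dual optima are attained. The same template would, in fact, go through with $B(F)$ replaced by any set whose support function equals $f$, so submodularity of $F$ is used only insofar as it underlies the representation \eqref{LPforLE}.
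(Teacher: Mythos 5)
Your proposal is correct and follows essentially the same route as the paper's proof of weak duality in Appendix~\ref{app:duality}: rewrite the primal as $\min_x \max_{w} \bigl(g(x) + w^{\top}x\bigr)$ using the support-function representation of $f$, apply the max-min inequality, and identify the inner minimum $\min_x \bigl(g(x) + w^{\top}x\bigr)$ with $-g^{*}(-w)$ via \eqref{fenchel}. Your closing remarks---that no convexity of $g$ is actually needed for this direction and that the argument works for any set whose support function is $f$---are accurate and consistent with the paper, which states the analogous result for an arbitrary finite $\mathcal{V}$ with $\conv(\mathcal{V})$ in place of $B(F)$.
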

Note that problems $(D)$ and $(D^\prime)$ have the same form.
We refer to both as the dual problem. To argue strong duality, we require the following lemma that characterizes optimality conditions in terms of the intersection of the sub-differentials of $f$ and $-g$.

\begin{lemma}\label{lemma2} 
Let $X^{\star} = \argmin_{x \in \mathbb{R}^n} g(x) + f(x)$ and 
$W^{\star} = \arg\max_{w \in B(F)} -g^*(-w)$ 
be the set of optimal solutions to the primal and dual problems respectively.
When \(X^{\star}\) is nonempty, for any $x^{\star} \in X^{\star}$, 
the negative of $\partial g(x^{\star})$ has a non-empty intersection with 
$\partial f(x^{\star})$: 
$-\partial g(x^{\star}) \cap \partial f(x^{\star}) \neq \emptyset$. 
Moreover, any vector in this intersection is a solution to the dual problem: $-\partial g(x^{\star}) \cap \partial f(x^{\star}) \subseteq W^{\star}$.
\end{lemma}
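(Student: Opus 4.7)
The plan is to combine the standard first-order optimality condition $0 \in \partial(g+f)(x^\star)$ with the subdifferential sum rule, and then translate membership in the intersection into a Fenchel equality that certifies dual optimality.

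First, I would verify that the sum rule for the subdifferential applies: since $f$ is the Lov\'asz extension, it is finite and continuous on all of $\mathbb{R}^n$, so $\mathop{\mathrm{dom}}(f) = \mathbb{R}^n$ and the Moreau--Rockafellar qualification is automatic, giving $\partial(g+f)(x^\star) = \partial g(x^\star) + \partial f(x^\star)$. For any $x^\star \in X^\star$ the optimality condition $0 \in \partial(g+f)(x^\star)$ then yields a $w^\star \in \partial f(x^\star)$ with $-w^\star \in \partial g(x^\star)$, which is exactly the claim $-\partial g(x^\star) \cap \partial f(x^\star) \neq \emptyset$.

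Next, I would show that any such $w^\star$ is in fact a dual optimizer. From $w^\star \in \partial f(x^\star)$ and the characterization \eqref{lemma:lovasz_attainment}, we get $w^\star \in B(F)$ (hence $w^\star$ is dual feasible) and $f(x^\star) = (w^\star)^\top x^\star$. From $-w^\star \in \partial g(x^\star)$ and the Fenchel equality \eqref{lemma:Fenchelpair}, we get $g(x^\star) + g^*(-w^\star) = -(w^\star)^\top x^\star$, hence
\begin{equation*}
-g^*(-w^\star) = g(x^\star) + (w^\star)^\top x^\star = g(x^\star) + f(x^\star).
\end{equation*}
So the dual objective at $w^\star$ equals the primal objective at $x^\star$, i.e.\ $p^\star = -g^*(-w^\star)$. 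Invoking weak duality (\cref{weakduality}), $-g^*(-w) \le p^\star$ for every $w \in B(F)$, which forces $w^\star \in W^\star$.

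The only real subtlety is justifying the subdifferential sum rule, but since $f$ is finite-valued everywhere this is immediate; everything else reduces to the Fenchel identity and the supporting-hyperplane description of $\partial f$ already recorded in \eqref{lemma:lovasz_attainment} and \eqref{lemma:Fenchelpair}. No new machinery is needed, and the argument simultaneously establishes both assertions of the lemma.
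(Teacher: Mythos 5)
Your proposal is correct and follows essentially the same route as the paper: first-order optimality plus the subdifferential sum rule give a point of $-\partial g(x^{\star}) \cap \partial f(x^{\star})$, and then the Fenchel equality \eqref{lemma:Fenchelpair} together with $f(x^{\star}) = (w^{\star})^{\top}x^{\star}$ from \eqref{lemma:lovasz_attainment} shows the dual objective at $w^{\star}$ matches $p^{\star}$, so weak duality forces $w^{\star} \in W^{\star}$. Your explicit check that the Moreau--Rockafellar qualification holds (since $f$ is finite everywhere) is a small refinement the paper leaves implicit.
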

The proof of \cref{lemma2} follows from \eqref{Fenchelpair} and weak duality, and appears in Appendix \ref{app:duality}. It further follows from \cref{lemma2} that strong duality holds whenever there exists an optimal solution to the primal or the dual problem, as $\emptyset \neq -\partial g(x^\star) \cap \partial f(x^\star)\subseteq W^\star$, and thus $W^\star$ is also attained. 
\begin{lemma}[Strong Duality]\label{strongduality}
Whenever a solution $x^\star \in \arg\min_{x} g(x) + f(x)$ exists, then some $w^\star \in \arg\max_{w \in B(F)} -g^*(-w)$ also exists and strong duality holds: $g(x^\star) + f(x^\star) = -g^*(-w^\star)$. 
\end{lemma}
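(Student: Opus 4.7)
The plan is to read off this result almost directly from \cref{lemma2} together with the Fenchel--Young equality characterization in \eqref{Fenchelpair} and the Lovász extension identity in \eqref{lemma:lovasz_attainment}. First I would use \cref{lemma2} to assert that whenever a primal minimizer $x^\star$ exists, the intersection $-\partial g(x^\star)\cap \partial f(x^\star)$ is nonempty, and moreover any element of this intersection lies in the dual optimal set $W^\star$. Picking any such element $w^\star$ already proves the existence claim, so the only remaining task is to verify the equality of objective values.

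Next I would turn the two memberships into equalities. From $-w^\star \in \partial g(x^\star)$ and \eqref{Fenchelpair} applied to $g$, I would obtain $g(x^\star) + g^*(-w^\star) = -(w^\star)^\top x^\star$. From $w^\star \in \partial f(x^\star)$ and the characterization of the Lovász subdifferential in \eqref{lemma:lovasz_attainment}, I would get $f(x^\star) = (w^\star)^\top x^\star$. Adding these two identities causes the $\pm (w^\star)^\top x^\star$ terms to cancel, producing
\begin{equation*}
g(x^\star) + f(x^\star) = -g^*(-w^\star),
\end{equation*}
which is exactly the strong duality equality.

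Since we already have weak duality $p^\star \geq d^\star$ (\cref{weakduality}) and the computed value $-g^*(-w^\star)$ is a feasible dual value (as $w^\star \in B(F)$ by \cref{lemma2}), this equality also certifies that $w^\star$ achieves the dual maximum, closing any loose end on optimality. There is no real obstacle here: the conceptual work was done in \cref{lemma2}, which packaged the KKT-type optimality conditions using $\partial f$ and $\partial g$; this lemma is merely the clean bookkeeping step that turns those subgradient inclusions into matched primal and dual objective values through the Fenchel--Young identity. The only mild subtlety worth stating explicitly is that $g$ is assumed closed and proper convex, so that \eqref{Fenchelpair} applies without qualification and $g^*(-w^\star)$ is well-defined; no strong convexity or smoothness of $g$ is needed for this step.
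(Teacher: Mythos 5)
Your proof is correct and follows essentially the same route as the paper: invoke \cref{lemma2} to get a nonempty intersection $-\partial g(x^\star)\cap\partial f(x^\star)\subseteq W^\star$, then combine the Fenchel--Young equality \eqref{lemma:Fenchelpair} with the Lov\'asz identity $f(x^\star)=(w^\star)^\top x^\star$ from \eqref{lemma:lovasz_attainment} to cancel the inner-product terms, exactly as in the paper's argument (cf.\ the proof of \cref{thm:strong-duality-subproblems} in Appendix~\ref{app:duality}).
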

As a corollary of \cref{lemma2}, one can construct a solution to the dual problem given a solution to the primal problem at which $g$ or $f$ is differentiable (see Corollary \eqref{dualsolution}, Appendix \ref{app:duality}).
\fi

\section{Limited Memory Kelley's Method}\label{sec:lkm}
We now present our novel limited memory adaptation {\sc L-KM} of the Original Simplicial Method ({\sc OSM}). We first briefly review {\sc OSM} as proposed by Bach \cite[Section 7.7]{Bach2013} and discuss problems of {\sc OSM} with respect to memory requirements and the rate of convergence. We then highlight the changes in {\sc OSM}, and verify that these changes will enable us to show a bound on the memory requirements while maintaining finite convergence.
Proofs omitted from this section can be found in Appendix \ref{app:lkm}.

\textbf{Original Simplicial Method:} To solve the primal problem
\eqref{eq:primal}, it is natural to approximate the piecewise linear Lov\'{a}sz extension $f$ with cutting planes derived from the function values and subgradients of the function at previous iterations, which results in piecewise linear lower approximations of \(f\). This is the basic idea of {\sc OSM} introduced by Bach in \cite{Bach2013}. This approach contrasts with Kelley's Method, which approximates the entire objective function $g+f$.
{\sc OSM} adds a cutting plane to the approximation of \(f\) at each iteration, so the number of the linear constraints in its subproblems grows linearly with the number of iterations.\footnote{
Concretely, we obtain {\sc OSM} from \cref{alg:L-KM} by setting $\mathcal{V}^{(i)} \overset{\Delta}{=} \V^{(i-1)}\cup \{v^{(i)}\}$ in step \ref{alg:memory_update}.
}
Hence it becomes increasingly challenging to solve the subproblem as the number of iterations grows up.
Further, in spite of a finite convergence, as mentioned in the introduction there was no known rate of convergence for {\sc OSM} or its dual method prior to this work.

\textbf{Limited Memory Kelley's Method:} To address these two challenges
--- memory requirements and unknown convergence rate ---
we introduce and analyze a novel limited memory version {\sc L-KM} of {\sc OSM} which ensures that the number of cutting planes maintained by the algorithm at any iteration is bounded by $n+1$.
This thrift bounds the size of the subproblems at any iteration, thereby making {\sc L-KM} cheaper to implement. We describe {\sc L-KM} in detail in \cref{alg:L-KM}.


\begin{algorithm}[!t]\footnotesize
\caption{{\sc L-KM}: The Limited Memory Kelley's Method for \eqref{eq:primal}
\label{alg:L-KM}}
\begin{algorithmic}[1]
\Require strongly convex function \(g: \mathbb{R}^n \rightarrow \mathbb{R}\), 
submodular function \(F: 2^n \rightarrow \mathbb{R}\),
tolerance \(\epsilon \geq 0\)
\Ensure $\epsilon$-suboptimal solution \(x^{\sharp}\) to \eqref{eq:primal}
\State initialize:
choose \(x^{(0)} \in \mathbb{R}^{n}\), 
set \(\emptyset \subset \mathcal{V}^{(0)} \subseteq \mathrm{vert}(B(F))\) affinely independent
\For {$i=1,2,\ldots$}
 \State \textbf{Convex subproblem.} Define approximation \(f_{(i)}(x) = \max\{w^{\top}x : w \mathop{\in} \mathcal{V}^{(i-1)}\}\) and solve
 \[
 x^{(i)} = \argmin g(x) + f_{(i)}(x).    \label{subp}
 \]
 \State \textbf{Submodular subproblem}. Compute value and subgradient of $f$ at $x^{(i)}$ 
 \[
 f(x^{(i)}) = \max_{w \in B(F)} w^{\top} x^{(i)}, \qquad v^{(i)} \mathop{\in} \partial f(x^{(i)}) = \argmax_{w \in B(F)} w^{\top} x^{(i)}.
 \]
\State \textbf{Stopping condition.} Break if duality gap \(p^{(i)} - d^{(i)} \leq \epsilon\), where
    \[
    p^{(i)} = g(x^{(i)}) + f(x^{(i)}), \qquad d^{(i)} = g(x^{(i)}) + f_{(i)}(x^{(i)}).
    \]
  \State \textbf{Update memory.} Identify active set \(\mathcal{A}^{(i)}\) and update memory \(\mathcal{V}^{(i)}\):
 \[
 \mathcal{A}^{(i)} = \{w \in \mathcal{V}^{(i-1)}: w^{\top}{x^{(i)}} = f_{(i)}(x^{(i)})\},
 \qquad
 \mathcal{V}^{(i)} = \mathcal{A}^{(i)} \cup \{v^{(i)}\}.
 \] \label{alg:memory_update}
\EndFor
\State \Return \(x^{(i)}\)

\end{algorithmic}
\end{algorithm} 

\journal{We use \(z^{(i)}\) and \(p^{(i)}\) to keep track of the best iterate and its value so far. The current iterate $x^{(i)}$ can be worse in terms of the function value than previous iterates (e.g. \cref{lmfig} (b), (c)).}


\begin{figure}
\centering
 \begin{subfigure}{3.3cm}
  \centering\includegraphics[width=3.3cm]{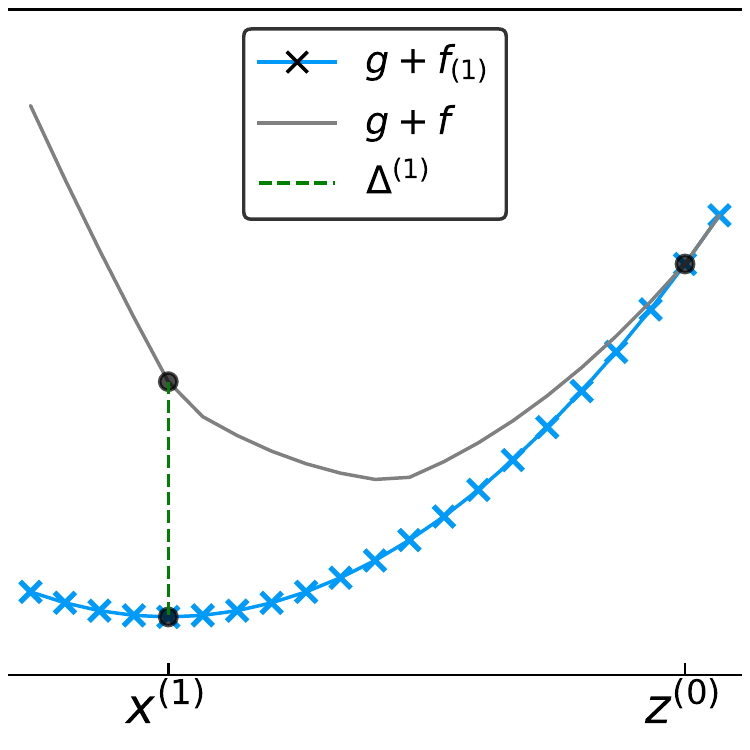}
 \end{subfigure}
 \begin{subfigure}{3.3cm}
   \centering\includegraphics[width=3.3cm]{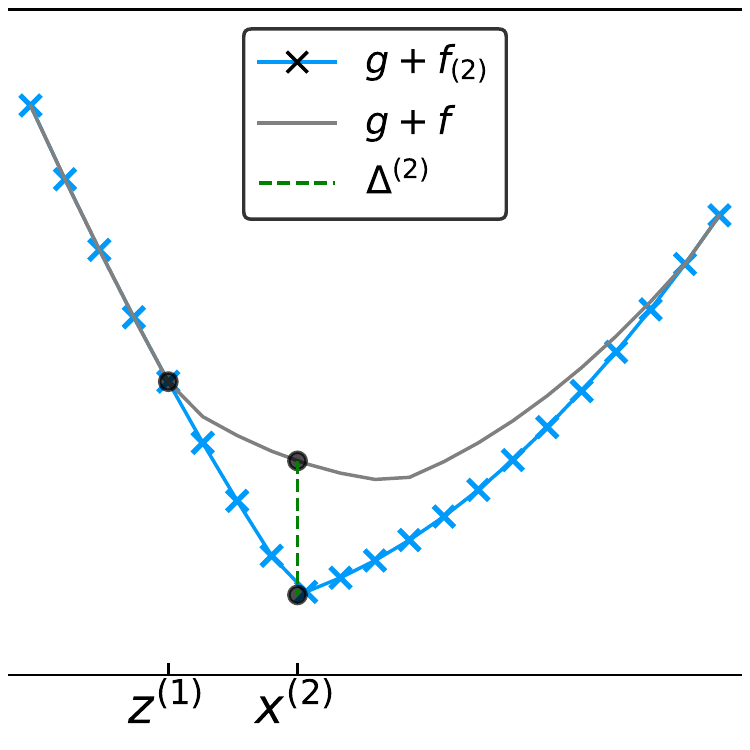}
 \end{subfigure}
 \begin{subfigure}{3.3cm}
   \centering\includegraphics[width=3.3cm]{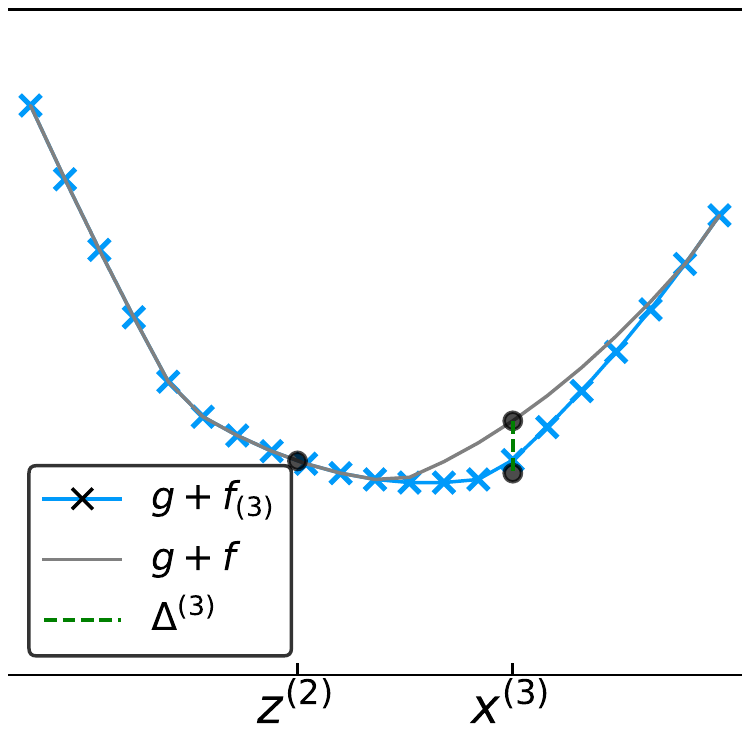}
 \end{subfigure}
 \begin{subfigure}{3.3cm}
   \centering\includegraphics[width=3.3cm]{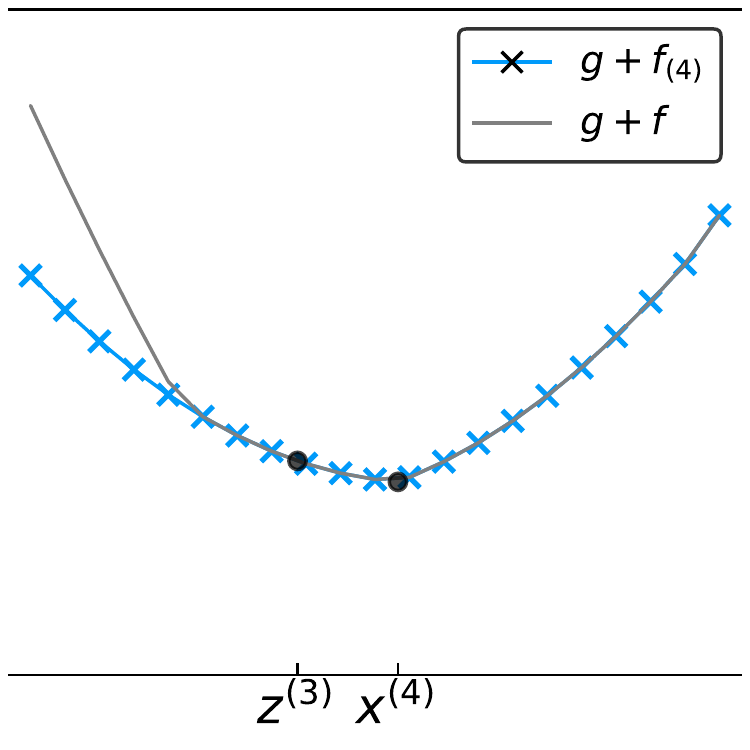}
 \end{subfigure}
 \caption{\footnotesize An illustration of {\sc L-KM} (a)-(d) (left to right): blue curve marked with \(\times\) denotes the $i$th function approximation $g + f_{(i)}$. In (d), note that {\sc L-KM} approximation \(g + f_{(4)}\) is obtained by dropping the leftmost constraint in $g+f_{(3)}$ (in (c)), unlike {\sc OSM}.}
  \label{lmfig}
\end{figure}


{\sc L-KM} and {\sc OSM} differ only in the set of vertices $\mathcal V^{(i)}$ considered at each step:
{\sc L-KM} keeps only those vectors $w \in \mathcal{V}^{(i-1)}$ that maximize $w^{\top}x^{(i)}$,
whereas {\sc OSM} keeps every vector in \(\mathcal{V}^{(i-1)}\). 

We state some properties of {\sc L-KM} here with
proofs in \cref{app:lkm}. We will revisit many of these
properties later via the lens of duality. 

The sets $\mathcal{V}^{(i)}$ in {\sc L-KM} are affinely independent, which shows the size of the subproblems is bounded.
\begin{theorem}
\label{thm:limited}
For all \(i \geq 0\), vectors in \(\mathcal{V}^{(i)}\) are affinely independent. Moreover, $|\mathcal{V}^{(i)}|\leq n+1$.
\end{theorem}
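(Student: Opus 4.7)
The plan is to prove the statement by induction on $i$, using the geometric fact that all retained vertices in $\mathcal{A}^{(i)}$ lie on the hyperplane $\{w : w^{\top} x^{(i)} = f_{(i)}(x^{(i)})\}$, while the newly added vertex $v^{(i)}$ does not.

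The base case $i=0$ is handled directly by the initialization of \cref{alg:L-KM}, which requires $\mathcal{V}^{(0)}$ to be affinely independent. For the inductive step, suppose that $\mathcal{V}^{(i-1)}$ is affinely independent. Since $\mathcal{A}^{(i)} \subseteq \mathcal{V}^{(i-1)}$, the set $\mathcal{A}^{(i)}$ inherits affine independence, and $\mathcal{V}^{(i)} = \mathcal{A}^{(i)} \cup \{v^{(i)}\}$ by construction. Hence the entire inductive step reduces to showing that $v^{(i)}$ does not lie in the affine hull of $\mathcal{A}^{(i)}$.

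To establish this, I would argue by contradiction. Suppose $v^{(i)} = \sum_{w \in \mathcal{A}^{(i)}} \lambda_w\, w$ with $\sum_{w} \lambda_w = 1$. Every $w \in \mathcal{A}^{(i)}$ satisfies $w^{\top} x^{(i)} = f_{(i)}(x^{(i)})$ by the definition of the active set, so taking the inner product of both sides with $x^{(i)}$ yields $(v^{(i)})^{\top} x^{(i)} = f_{(i)}(x^{(i)})$. On the other hand, since $v^{(i)} \in \partial f(x^{(i)}) = \argmax_{w \in B(F)} w^{\top} x^{(i)}$ by \eqref{lemma:lovasz_attainment}, we have $(v^{(i)})^{\top} x^{(i)} = f(x^{(i)})$. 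The memory update step is only executed when the stopping condition fails, which gives $f(x^{(i)}) - f_{(i)}(x^{(i)}) = p^{(i)} - d^{(i)} > \epsilon \geq 0$, a strict inequality contradicting $f(x^{(i)}) = f_{(i)}(x^{(i)})$. Hence $v^{(i)} \notin \mathrm{aff}(\mathcal{A}^{(i)})$, and $\mathcal{V}^{(i)}$ is affinely independent.

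The bound $|\mathcal{V}^{(i)}| \leq n+1$ is then immediate from the standard fact that any affinely independent subset of $\mathbb{R}^n$ has cardinality at most $n+1$. The only subtle point in the argument is recognizing that the active-set update is precisely designed so that the strict duality gap at non-terminating iterations is the geometric witness separating $v^{(i)}$ from $\mathrm{aff}(\mathcal{A}^{(i)})$; I do not anticipate any other obstacles.
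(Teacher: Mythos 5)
Your proof is correct and follows essentially the same route as the paper's: induction on $i$, with the key observation that every $w \in \mathcal{A}^{(i)}$ lies on the hyperplane $\{w : w^{\top}x^{(i)} = f_{(i)}(x^{(i)})\}$ while $(v^{(i)})^{\top}x^{(i)} = f(x^{(i)}) > f_{(i)}(x^{(i)})$ whenever the stopping condition fails, so $v^{(i)} \notin \mathrm{aff}(\mathcal{A}^{(i)})$. Your write-up is in fact slightly more explicit than the paper's about the affine-combination contradiction and the role of the strict duality gap.
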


{\sc L-KM} may discard pieces of the lower approximation \(f_{(i)}\) at each iteration.
However, it does so without any adverse affect on the solution to the current subproblem:
\begin{lemma}
The convex subproblem (in Step 3 of algorithm {\sc L-KM}) has the same solution and optimal value 
over the new active set $\mathcal{A}^{(i)}$
as over the memory $\mathcal{V}^{(i-1)}$: 
\[
x^{(i)}
= \argmin g(x) + \max\{w^{\top}x : w \in \mathcal{V}^{(i-1)}\}
= \argmin g(x) + \max\{w^{\top}x : w \in \mathcal{A}^{(i)}\}.\]
\label{lemma:equality}
\end{lemma}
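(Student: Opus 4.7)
The plan is to verify first-order optimality conditions, exploiting the fact that $\mathcal{A}^{(i)}$ is defined to be precisely the active pieces of $f_{(i)}$ at $x^{(i)}$. Write $\tilde{f}_{(i)}(x) := \max\{w^{\top}x : w \in \mathcal{A}^{(i)}\}$. Since $\mathcal{A}^{(i)} \subseteq \mathcal{V}^{(i-1)}$, we have $\tilde{f}_{(i)}(x) \leq f_{(i)}(x)$ for all $x$, with equality at $x = x^{(i)}$ by the definition of $\mathcal{A}^{(i)}$. In particular the objective values of the two subproblems agree at $x^{(i)}$, so the equality of optimal values will follow once we show $x^{(i)}$ is the minimizer of the reduced subproblem.

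The main step is to observe that the subdifferentials of $f_{(i)}$ and $\tilde{f}_{(i)}$ coincide at $x^{(i)}$. Since the pointwise maximum of finitely many linear functions has subdifferential equal to the convex hull of the active linear pieces, we have
\[
\partial f_{(i)}(x^{(i)}) \;=\; \operatorname{conv}\!\left(\{w \in \mathcal{V}^{(i-1)} : w^{\top}x^{(i)} = f_{(i)}(x^{(i)})\}\right) \;=\; \operatorname{conv}(\mathcal{A}^{(i)}).
\]
On the other hand every $w \in \mathcal{A}^{(i)}$ attains $w^{\top}x^{(i)} = f_{(i)}(x^{(i)}) = \tilde{f}_{(i)}(x^{(i)})$, so by the same formula $\partial \tilde{f}_{(i)}(x^{(i)}) = \operatorname{conv}(\mathcal{A}^{(i)}) = \partial f_{(i)}(x^{(i)})$.

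Now, since $x^{(i)} = \argmin\,g(x) + f_{(i)}(x)$ and $g$ is closed strongly convex (so the subdifferential sum rule applies), optimality yields
\[
0 \in \partial g(x^{(i)}) + \partial f_{(i)}(x^{(i)}) \;=\; \partial g(x^{(i)}) + \partial \tilde{f}_{(i)}(x^{(i)}) \;=\; \partial\bigl(g + \tilde{f}_{(i)}\bigr)(x^{(i)}).
\]
By convexity this is sufficient for global optimality of $x^{(i)}$ for the reduced problem, and strong convexity of $g$ makes the minimizer unique, giving the claimed equality of argmins. The equality of optimal values then follows from $\tilde{f}_{(i)}(x^{(i)}) = f_{(i)}(x^{(i)})$.

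The only real subtlety is noticing that one cannot argue directly by function-value inequalities: $\tilde{f}_{(i)} \leq f_{(i)}$ gives a bound in the wrong direction for showing $x^{(i)}$ remains optimal, which is precisely why the subdifferential identification at $x^{(i)}$ is the right tool. Everything else is routine bookkeeping.
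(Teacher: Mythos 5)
Your proof is correct, and it takes a somewhat different route from the paper's. The paper proves the same local fact --- that only the active pieces matter at $x^{(i)}$ --- by a quantitative neighborhood argument: an auxiliary lemma (\cref{lemma:eq_ball}) shows that $f_{(i)}$ and the reduced max $\tilde f_{(i)}$ coincide on a small ball around $x^{(i)}$, so $x^{(i)}$ is a local (hence, by convexity, global) minimizer of the reduced problem. You instead identify the subdifferentials at the single point $x^{(i)}$, using the standard formula $\partial \bigl(\max_{w \in \mathcal W} w^\top x\bigr) = \conv\{w \in \mathcal W : w^\top x \text{ attains the max}\}$, and transfer the first-order optimality condition $0 \in \partial g(x^{(i)}) + \conv(\mathcal A^{(i)})$ from the full problem to the reduced one. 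Your version is shorter and dispenses with the $\epsilon/(3L)$-ball lemma entirely, at the cost of invoking the subdifferential sum rule and the max-of-linear-functions formula (both unproblematic here since $g$ and $f_{(i)}$ are finite-valued on all of $\mathbb{R}^n$); the paper's version is more self-contained, relying only on pointwise function comparisons. Your closing remark is also on target: the inequality $\tilde f_{(i)} \leq f_{(i)}$ alone only bounds the reduced optimal value from above by the original one, so some local or first-order argument is genuinely needed to close the loop --- which is exactly what both proofs supply, in different ways.
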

\cref{lemma:equality} shows that {\sc L-KM} remembers the important information about the solution,  i.e. only the tight subgradients, at each iteration. Note that at the \(i\)th iteration, the solution \(x^{(i)}\) is unique by the strong convexity of \(g\), and thus we can improve the lower bound \(d^{(i)}\) since new information (i.e. $v^{(i)}$) is added:
\begin{corollary}
\label{cor:lowerbounds}
The sequence \{\(d^{(i)}\)\} constructed by {\sc L-KM} form strictly increasing lower bounds
on the value of \eqref{eq:primal}: \(d^{(1)} < d^{(2)} < \cdots \leq p^{\star} \overset{\Delta}{=} \min_{x \mathop{\in} \mathbb{R}^n} f(x) + g(x)\).
\end{corollary}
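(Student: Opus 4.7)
The plan is to establish three things: each $d^{(i)}$ is a lower bound on $p^\star$, the sequence $\{d^{(i)}\}$ is monotone, and the monotonicity is strict on every iteration before termination. All three will follow from the construction of $\mathcal{V}^{(i)}$ and $\mathcal{A}^{(i)}$ together with \cref{lemma:equality} and the uniqueness of minimizers of strongly convex functions.

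For the upper bound $d^{(i)} \leq p^\star$, I would first observe that $\mathcal{V}^{(0)} \subseteq \mathrm{vert}(B(F))$ by initialization, and each subsequent $\mathcal{V}^{(i)}$ is formed by unioning a subset of $\mathcal{V}^{(i-1)}$ with a vertex $v^{(i)} \in \mathrm{vert}(B(F))$ returned by the submodular subproblem. Hence $\mathcal{V}^{(i-1)} \subseteq B(F)$, so $f_{(i)}(x) = \max_{w \in \mathcal{V}^{(i-1)}} w^{\top} x \leq \max_{w \in B(F)} w^{\top} x = f(x)$ pointwise, and minimizing $g + f_{(i)}$ over $\mathbb{R}^n$ gives $d^{(i)} \leq \min_x g(x) + f(x) = p^\star$.

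For weak monotonicity, I would apply \cref{lemma:equality} to rewrite $d^{(i)}$ as the minimum value of the problem $\min_x g(x) + \max_{w \in \mathcal{A}^{(i)}} w^{\top} x$. Since $\mathcal{V}^{(i)} = \mathcal{A}^{(i)} \cup \{v^{(i)}\} \supseteq \mathcal{A}^{(i)}$, we have $f_{(i+1)}(x) \geq \max_{w \in \mathcal{A}^{(i)}} w^{\top} x$ for every $x$, so taking the minimum over $x$ of $g(x)$ plus either side yields $d^{(i+1)} \geq d^{(i)}$.

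The strict inequality is the key step, and I would handle it by contradiction. Suppose $d^{(i+1)} = d^{(i)}$. Both $g(x) + f_{(i+1)}(x)$ and $g(x) + \max_{w \in \mathcal{A}^{(i)}} w^{\top} x$ are strongly convex (convex plus strongly convex), hence each has a unique minimizer. Because the former dominates the latter pointwise but they share the same optimal value, their unique minimizers must coincide, forcing $x^{(i+1)} = x^{(i)}$. Substituting gives $d^{(i+1)} = g(x^{(i)}) + f_{(i+1)}(x^{(i)}) \geq g(x^{(i)}) + v^{(i)\top} x^{(i)} = g(x^{(i)}) + f(x^{(i)}) = p^{(i)}$, where the equality $v^{(i)\top} x^{(i)} = f(x^{(i)})$ is built into the submodular subproblem. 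On any iteration before termination, $p^{(i)} > d^{(i)}$ (the stopping condition has not fired), which contradicts $d^{(i+1)} = d^{(i)}$. The main subtlety will be being clear that the strict inequality is asserted only over iterations up to termination: at termination $p^{(i)} = d^{(i)}$, so $v^{(i)}$ adds nothing new to $\mathcal{V}^{(i)}$ (it already lies in the tight set $\mathcal{A}^{(i)}$'s convex hull via \eqref{lemma:lovasz_attainment}), and the above chain collapses to equality, as it should.
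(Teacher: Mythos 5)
Your proof is correct and follows essentially the same route as the paper's: both rest on $f_{(i)} \leq f$ pointwise for the bound $d^{(i)} \leq p^\star$, on \cref{lemma:equality} together with $\mathcal{A}^{(i)} \subseteq \mathcal{V}^{(i)}$ for monotonicity, and on uniqueness of minimizers under strong convexity for strictness. If anything, your contradiction argument (that $d^{(i+1)} = d^{(i)}$ forces $x^{(i+1)} = x^{(i)}$ and hence $d^{(i+1)} \geq p^{(i)} > d^{(i)}$ before termination) makes explicit the no-stalling step that the paper's proof of the corollary invokes only implicitly via ``optimality and uniqueness of $x^{(i-1)}$.''
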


\begin{remark}
It is easy to see that the sequence \(\{p^{(i)}\}\) constructed by {\sc L-KM} form upper bounds of \(p^{\star}\), hence by \cref{cor:lowerbounds}, \(\{p^{(i)}-d^{(i)}\}\) form valid optimality gaps for {\sc L-KM}.
\end{remark}

\begin{corollary}
\label{cor:inequality}
{\sc L-KM} does not stall: for any iterations \(i_1 \neq i_2\), we solve subproblems over a distinct set of vertices \(\mathcal{V}^{(i_1)} \neq \mathcal{V}^{(i_2)}\).
\end{corollary}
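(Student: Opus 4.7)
The plan is to argue by contradiction, using Corollary \ref{cor:lowerbounds} which guarantees that the sequence $\{d^{(i)}\}$ of lower bounds produced by \textsc{L-KM} is \emph{strictly} increasing. The idea is that if two memory sets ever coincided, they would force identical convex subproblems at the following iterations, hence identical lower bounds, contradicting strict monotonicity.

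More precisely, I would proceed as follows. Suppose for contradiction that $\mathcal{V}^{(i_1)} = \mathcal{V}^{(i_2)}$ for some $i_1 < i_2$ (both iterations are reached without termination, otherwise $\mathcal{V}^{(i_2)}$ would not be defined). Because the piecewise linear model used in Step~3 is entirely determined by the stored vertex set, we have
\[
f_{(i_1+1)}(x) = \max\{w^{\top}x : w \in \mathcal{V}^{(i_1)}\} = \max\{w^{\top}x : w \in \mathcal{V}^{(i_2)}\} = f_{(i_2+1)}(x)
\]
for all $x \in \mathbb{R}^n$. Strong convexity of $g$ makes the minimizer of $g + f_{(i)}$ unique, so $x^{(i_1+1)} = x^{(i_2+1)}$. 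Plugging into the definition of the lower bound yields
\[
d^{(i_1+1)} = g(x^{(i_1+1)}) + f_{(i_1+1)}(x^{(i_1+1)}) = g(x^{(i_2+1)}) + f_{(i_2+1)}(x^{(i_2+1)}) = d^{(i_2+1)}.
\]

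To close the argument, I would invoke Corollary \ref{cor:lowerbounds}: since $i_1 < i_2$, we have $i_1 + 1 \leq i_2 < i_2 + 1$, so strict monotonicity of $\{d^{(i)}\}$ gives $d^{(i_1+1)} < d^{(i_2+1)}$, contradicting the equality just derived. Hence no two memory sets can coincide.

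There is really no serious obstacle here; the only subtle point is ensuring that the chain of indices is well-formed, i.e.\ that the algorithm has not terminated at any $i \leq i_2 + 1$. This is harmless: by hypothesis $\mathcal{V}^{(i_2)}$ exists, so every iteration up to $i_2$ completed Step~6, and the reasoning above only compares the very next subproblem at each of $i_1$ and $i_2$, which is also well-defined because the for-loop continues indefinitely until the stopping test triggers.
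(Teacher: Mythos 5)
Your proof is correct and follows essentially the same route as the paper: assume two memory sets coincide, note that the vertex set determines the next subproblem and hence the next lower bound, and derive a contradiction with the strict monotonicity of $\{d^{(i)}\}$ from Corollary \ref{cor:lowerbounds}. Your version is in fact slightly more careful about the indexing (the set $\mathcal{V}^{(i)}$ determines $d^{(i+1)}$, not $d^{(i)}$), which the paper's one-line proof glosses over.
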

We can strengthen \cref{cor:inequality} and show {\sc L-KM} in fact converges to the
exact solution in finite iterations:
\begin{theorem}
\label{thm:termination}
{\sc L-KM} ({\cref{alg:L-KM}}) terminates after finitely many iterations. Moreover, for any given $\epsilon \geq 0$,
suppose {\sc L-KM} terminates when \(i = i_\epsilon\), then \(p^{\star} + \epsilon \geq p^{(i_\epsilon)} \geq p^{\star}\) and \(p^{\star} \geq d^{(i_\epsilon)} \geq p^{\star} - \epsilon\). In particular, when we choose \(\epsilon=0\), we have \(p^{(i_0)} = p^{\star} = d^{(i_\epsilon)}\), and \(x^{(i_\epsilon)}\) is the unique optimal solution to \eqref{eq:primal}.
\end{theorem}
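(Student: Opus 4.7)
\textbf{Proof plan for \cref{thm:termination}.} The plan is to deduce finite termination from the fact that the memory sets $\mathcal{V}^{(i)}$ live in a finite universe, then extract the stated error bounds directly from the stopping condition and the sandwich $d^{(i)} \leq p^\star \leq p^{(i)}$, and finally specialize to $\epsilon = 0$ using strong convexity.

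First, I would establish finite termination. By \cref{thm:limited} each $\mathcal{V}^{(i)}$ is an affinely independent subset of $\mathrm{vert}(B(F))$ of size at most $n+1$, and note that in step 4 the greedy subroutine returns $v^{(i)} \in \mathrm{vert}(B(F))$, so the memory stays inside the vertex set at every step. Since $B(F)$ has finitely many vertices, there are only finitely many such subsets. By \cref{cor:inequality} the algorithm does not stall, i.e.\ the memory sets at distinct (non-terminated) iterations are distinct. If {\sc L-KM} never invoked the stopping test in step 5, the iterates would produce infinitely many pairwise distinct memory sets, contradicting the finite universe. Hence there exists a finite $i_\epsilon$ at which $p^{(i_\epsilon)} - d^{(i_\epsilon)} \leq \epsilon$ and the algorithm terminates.

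Next I would establish the two-sided bounds. For the upper side, the subproblem approximation satisfies $f_{(i)}(x) \leq f(x)$ pointwise (it is the maximum of finitely many subgradient lower bounds to the convex function $f$), so
\[
d^{(i)} = g(x^{(i)}) + f_{(i)}(x^{(i)}) \leq \min_{x} g(x) + f_{(i)}(x) \text{ evaluated at any minimizer, etc.,}
\]
and in fact $d^{(i)} \leq p^\star$ by \cref{cor:lowerbounds}. For the lower side, $p^{(i)} = g(x^{(i)}) + f(x^{(i)}) \geq p^\star$ by definition of $p^\star$. At termination $p^{(i_\epsilon)} - d^{(i_\epsilon)} \leq \epsilon$, so combining the three inequalities yields
\[
p^\star \leq p^{(i_\epsilon)} \leq d^{(i_\epsilon)} + \epsilon \leq p^\star + \epsilon, \qquad p^\star - \epsilon \leq p^{(i_\epsilon)} - \epsilon \leq d^{(i_\epsilon)} \leq p^\star,
\]
which are exactly the bounds claimed.

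Finally, for the $\epsilon = 0$ case the sandwich collapses to $p^{(i_0)} = d^{(i_0)} = p^\star$, so $g(x^{(i_0)}) + f(x^{(i_0)}) = p^\star$ and $x^{(i_0)}$ is optimal for \eqref{eq:primal}. Uniqueness is immediate because $g$ is strongly convex and $f$ is convex, hence $g+f$ is strongly convex and has a unique minimizer.

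The main obstacle is the finite-termination step: one must verify that the memory $\mathcal{V}^{(i)}$ always consists of vertices of $B(F)$ (so the universe is finite) and simultaneously invoke both the affine independence from \cref{thm:limited} and the non-stalling property of \cref{cor:inequality}. Everything else is routine bookkeeping on the primal/dual bounds.
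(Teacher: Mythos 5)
Your proof is correct and follows essentially the same route as the paper: finite termination from the finiteness of $\mathrm{vert}(B(F))$ combined with the non-stalling property of \cref{cor:inequality}, and then the sandwich $d^{(i_\epsilon)} \le p^\star \le p^{(i_\epsilon)}$ (from \cref{cor:lowerbounds} and feasibility of $x^{(i_\epsilon)}$) combined with the stopping rule, with uniqueness at $\epsilon=0$ from strong convexity of $g+f$. The only cosmetic difference is that you read the gap bound directly off the break condition, whereas the paper packages the same fact as a short contradiction argument.
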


In this section, we have shown that {\sc L-KM} solves a series of limited memory convex subproblems
with no more than $n+1$ linear constraints, and produces strictly increasing lower bounds that converge to the optimal value. 

\if oldproof

However, at any given iteration $i$, Lemma~\ref{lemma:equality} shows that deletion of vertices does not affect the solution $x^{(i)}$ of the subproblem. This lemma will be crucial in showing that limiting the set of vectors in $\V^{(i)}$ in each iteration does not effect the convergence of {\sc L-KM}.

The proof of this lemma follows by observing that $x^{(i)}$ remains locally optimal for $\widetilde{P}_{(i)}$ due to preservation of the outer normal cone supported at $w^{\natural} \in \V^{(i-1)}$ such that $f_{(i)}(x^{(i)}) = \max_{w \in \text{conv}(\V^{(i-1)})}w^{\top}x = w^{\natural T}x^{(i)}$. We include the detailed proof in Appendix \ref{app:lkm}.


\begin{corollary}\label{lowerbounds}
The sequence of lower bounds \{\(d^{(i)}\)\} constructed by {\sc L-KM} are non-decreasing, i.e. \(d^{(i-1)} \leq d^{(i)}\) for all \(i\geq 1\).
\end{corollary}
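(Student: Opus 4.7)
The plan is to invoke Lemma \ref{lemma:equality} to rewrite $d^{(i-1)}$ in a form that matches naturally against the definition of $d^{(i)}$. Concretely, $d^{(i-1)}$ is by definition the optimal value of the convex subproblem $\min_x g(x) + \max\{w^\top x : w \in \mathcal V^{(i-2)}\}$. Lemma \ref{lemma:equality} tells me that this optimal value is unchanged if I replace the full memory $\mathcal V^{(i-2)}$ by its tight subset at $x^{(i-1)}$, i.e.
\[
d^{(i-1)} = \min_{x} g(x) + \max\{w^\top x : w \in \mathcal A^{(i-1)}\}.
\]
This is the only substantive input to the proof; the rest is a pointwise monotonicity argument.

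Next I would use the memory update rule in Step \ref{alg:memory_update} to relate $\mathcal A^{(i-1)}$ to the memory that drives the $i$th subproblem. By construction $\mathcal V^{(i-1)} = \mathcal A^{(i-1)} \cup \{v^{(i-1)}\} \supseteq \mathcal A^{(i-1)}$, and enlarging the set of vectors over which one takes a maximum can only raise the value pointwise:
\[
\max\{w^\top x : w \in \mathcal A^{(i-1)}\} \;\leq\; \max\{w^\top x : w \in \mathcal V^{(i-1)}\} \;=\; f_{(i)}(x) \qquad \text{for every } x \in \mathbb R^n.
\]
Adding $g(x)$ preserves the inequality pointwise, and taking the infimum over $x$ on both sides gives
\[
d^{(i-1)} \;\leq\; \min_{x} g(x) + f_{(i)}(x) \;=\; d^{(i)},
\]
which is the desired monotonicity.

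I expect no substantive obstacle: the technical heart of the claim is already packaged in Lemma \ref{lemma:equality}, which guarantees that pruning inactive supporting hyperplanes does not change the subproblem value; everything else is immediate from $\mathcal A^{(i-1)} \subseteq \mathcal V^{(i-1)}$ and taking monotone operations through a minimum. The only care is in the indexing, since $\mathcal V^{(0)}$ is the user-chosen initialization and no $\mathcal A^{(0)}$ is defined; I would therefore read the statement as holding for $i \geq 2$, or equivalently phrase the conclusion as $d^{(i)} \leq d^{(i+1)}$ for all $i \geq 1$.
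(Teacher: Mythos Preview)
Your proposal is correct and follows essentially the same approach as the paper: the paper states the corollary as ``a direct result of \cref{lemma:equality} and the inclusion relation $\mathcal{A}^{(i)} \subseteq \mathcal{V}^{(i)}$,'' and the appendix proof of the strictly-increasing variant (\cref{cor:lowerbounds}) carries out precisely your chain---rewrite $d^{(i-1)}$ over $\mathcal{A}^{(i-1)}$ via \cref{lemma:equality}, use $\mathcal{A}^{(i-1)} \subseteq \mathcal{V}^{(i-1)}$ for the pointwise inequality, then compare at the minimizer. Your remark on the index convention at $i=1$ is a fair observation but not a substantive gap.
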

This is a direct result of \cref{lemma:equality} and the inclusion relation \(\mathcal{A}^{(i)} \subseteq \mathcal{V}^{(i)} \subseteq \mathcal{V} = \text{vert}(B(F))\). We now show that our proposed {\sc L-KM} will not stall at suboptimal iterates:


\begin{lemma} For all \(i \geq 0\), when \(x^{(i)}\) is suboptimal, \(x^{(i+1)} \neq x^{(i)}\).
\end{lemma}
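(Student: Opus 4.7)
The plan is to argue by contradiction: I will assume that $x^{(i+1)} = x^{(i)}$ and show this forces $p^{(i)} = p^\star$, which (by strong convexity of $g$ and the uniqueness of the minimizer it entails) identifies $x^{(i)}$ as the primal optimum, contradicting the suboptimality hypothesis.

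The key observation is that, by construction in step \ref{alg:memory_update}, the new memory $\mathcal{V}^{(i)} = \mathcal{A}^{(i)}\cup\{v^{(i)}\}$ contains the fresh vertex $v^{(i)} \in \argmax_{w\in B(F)} w^\top x^{(i)}$, which certifies $v^{(i)\top}x^{(i)} = f(x^{(i)})$. Because $\mathcal{V}^{(i)} \subseteq B(F)$, the piecewise linear approximation $f_{(i+1)}(x) = \max\{w^\top x : w\in\mathcal{V}^{(i)}\}$ is bounded above by $f(x)$ pointwise, yet at the specific point $x^{(i)}$ it matches $f$ exactly because $v^{(i)}$ is feasible in the $\max$. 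Hence
\[
f_{(i+1)}(x^{(i)}) \;=\; f(x^{(i)}).
\]

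Now I would evaluate the lower bound produced at iteration $i+1$: by definition $d^{(i+1)} = g(x^{(i+1)}) + f_{(i+1)}(x^{(i+1)})$. Under the contradiction hypothesis $x^{(i+1)} = x^{(i)}$, the previous display gives $d^{(i+1)} = g(x^{(i)}) + f(x^{(i)}) = p^{(i)}$. On the other hand, since $f_{(i+1)} \leq f$ pointwise, we have the standard weak-duality-style sandwich
\[
d^{(i+1)} \;=\; \min_{x}\bigl[g(x)+f_{(i+1)}(x)\bigr] \;\leq\; \min_{x}\bigl[g(x)+f(x)\bigr] \;=\; p^\star \;\leq\; p^{(i)},
\]
and combining the two relations yields $p^{(i)} = p^\star$. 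Because $g+f$ is strongly convex with the unique minimizer $x^\star$, the equality $g(x^{(i)})+f(x^{(i)})=p^\star$ forces $x^{(i)}=x^\star$, contradicting the assumption that $x^{(i)}$ is suboptimal.

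I do not expect any serious obstacle here; the only care required is to confirm the two containments $v^{(i)}\in \mathcal{V}^{(i)}$ and $\mathcal{V}^{(i)}\subseteq B(F)$, both of which follow immediately from step \ref{alg:memory_update} together with a trivial induction showing $\mathcal{V}^{(j)}\subseteq\mathrm{vert}(B(F))$ for every $j$. Everything else is a one-line rearrangement of inequalities and a single appeal to strong convexity.
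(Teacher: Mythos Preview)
Your proposal is correct and follows essentially the same contradiction argument as the paper: both use $v^{(i)}\in\mathcal{V}^{(i)}$ to get $f_{(i+1)}(x^{(i)})\geq v^{(i)\top}x^{(i)}=f(x^{(i)})$, whence $d^{(i+1)}\geq p^{(i)}\geq p^\star$, which combined with the lower-bound property $d^{(i+1)}\leq p^\star$ forces $p^{(i)}=p^\star$ and hence (by strong convexity) optimality of $x^{(i)}$. Your write-up is slightly more explicit about the two-sided sandwich and the uniqueness step, but the logic is identical.
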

\begin{proof}
We prove this by contradiction. If \(x^{(i)}\) is suboptimal and \(x^{(i+1)} = x^{(i)}\), we have 
\begin{equation}
d^{(i+1)} = g(x^{(i)}) + f_{(i+1)}(x^{(i)}) \overset{(a)}{\geq} g(x^{(i)}) + {v^{(i)}}^{\top}(x^{(i)}) \overset{(b)}{=} g(x^{(i)}) + f(x^{(i)}) \geq p^{\star},
\end{equation}
where (a) and (b) follows from \(v^{(i)} \in \mathcal{V}^{(i)}\) and \(v^{(i)} \in \mathcal{V}(x^{(i)})\) respectively. This forces \(x^{(i)}\) to be optimal, and we get a contradiction.
\end{proof}

We show next that the size of the subproblems stays bounded by $n+1$. This follows from the affine independence of the sets $\mathcal{V}^{(i)}$, as we show in the following lemma (proof in Appendix \ref{app:lkm}): 
\begin{lemma}\label{thm:limited}
For all \(i \geq 0\), vectors in \(\mathcal{V}^{(i)}\) are affinely independent. Moreover, $|\mathcal{V}^{(i)}|\leq n+1$.
\end{lemma}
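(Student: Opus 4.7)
\textbf{Proof plan for \cref{thm:limited}.} The proof is by induction on $i$. The base case $i=0$ is immediate: $\mathcal{V}^{(0)}$ is chosen to be affinely independent in the initialization step of \cref{alg:L-KM}, which already forces $|\mathcal{V}^{(0)}|\le n+1$ since any affinely independent subset of $\reals^n$ has at most $n+1$ elements. For the inductive step, assume $\mathcal{V}^{(i-1)}$ is affinely independent and consider $\mathcal{V}^{(i)}=\mathcal{A}^{(i)}\cup\{v^{(i)}\}$. Because $\mathcal{A}^{(i)}\subseteq \mathcal{V}^{(i-1)}$, the set $\mathcal{A}^{(i)}$ inherits affine independence, so the whole task reduces to showing $v^{(i)}\notin\mathrm{aff}(\mathcal{A}^{(i)})$.

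The key observation is that when the memory update in Step~\ref{alg:memory_update} is executed, the stopping condition has just failed, so the duality gap $p^{(i)}-d^{(i)}>\epsilon\ge 0$. Spelling out the definitions of $p^{(i)}$ and $d^{(i)}$, this gives the strict inequality
\[
{v^{(i)}}^{\top}x^{(i)}\;=\;f(x^{(i)})\;>\;f_{(i)}(x^{(i)})\;=\;w^{\top}x^{(i)}\qquad\text{for every }w\in\mathcal{A}^{(i)},
\]
where the last equality uses the defining property of the active set $\mathcal{A}^{(i)}$.

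Now suppose, for a contradiction, that $v^{(i)}=\sum_{w\in\mathcal{A}^{(i)}}\lambda_w w$ with $\sum_{w\in\mathcal{A}^{(i)}}\lambda_w = 1$. Taking inner products with $x^{(i)}$ on both sides yields
\[
{v^{(i)}}^{\top}x^{(i)}\;=\;\sum_{w\in\mathcal{A}^{(i)}}\lambda_w\,w^{\top}x^{(i)}\;=\;f_{(i)}(x^{(i)})\sum_{w\in\mathcal{A}^{(i)}}\lambda_w\;=\;f_{(i)}(x^{(i)}),
\]
which contradicts the strict inequality displayed above. Hence $v^{(i)}$ does not lie in $\mathrm{aff}(\mathcal{A}^{(i)})$, so $\mathcal{V}^{(i)}$ is affinely independent and the cardinality bound $|\mathcal{V}^{(i)}|\le n+1$ follows.

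The conceptual content is essentially the argument above; the only subtle point is to confirm that whenever $\mathcal{V}^{(i)}$ is actually constructed (as opposed to the algorithm having returned already), the duality gap is strictly positive, which is why I invoke the stopping condition explicitly. I do not anticipate any serious obstacle beyond being careful that $\mathcal{A}^{(i)}$ is well-defined and nonempty --- nonemptiness holds because the maximum over the finite set $\mathcal{V}^{(i-1)}$ in the definition of $f_{(i)}(x^{(i)})$ is attained by at least one vertex, so that vertex lies in $\mathcal{A}^{(i)}$. This suffices to complete the induction.
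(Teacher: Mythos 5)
Your proof is correct and follows essentially the same route as the paper's: induction on $i$, with the inductive step showing $v^{(i)}\notin\mathrm{aff}(\mathcal{A}^{(i)})$ because the failed stopping condition forces ${v^{(i)}}^{\top}x^{(i)}=f(x^{(i)})>f_{(i)}(x^{(i)})$ while $\mathcal{A}^{(i)}$ (and hence its affine hull) lies in the hyperplane $\{w: w^{\top}x^{(i)}=f_{(i)}(x^{(i)})\}$. Your version merely spells out the affine-combination computation that the paper leaves implicit, which is a fine (and slightly more careful) presentation of the same argument.
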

Thus, we have shown that {\sc L-KM} solves a series of limited memory convex subproblems whose number of linear constraints do not exceed \(n+1\). {\sc L-KM} produces non-decreasing lower bounds and does not stall at suboptimal solutions. 
\fi

\section{Duality} \label{s:duality}

{\sc L-KM} solves a series of subproblems parametrized by the sets $\mathcal V \subseteq \mathop{\textup{vert}}(B(F))$:
\begin{equation}
\tag{\textit{P(V)}}
\bopt
\minimize & g(x) + t \\
\st & t \geq v^\top x, \quad v \in \mathcal V
\eopt
\label{eq:primal-subproblem}
\end{equation}
Notice that when $\mathcal V = \mathop{\textup{vert}}(B(F))$, we recover \eqref{eq:primal}.
We now analyze these subproblems via 
duality. The Lagrangian of this problem with dual variables $\lambda_v$ for $v \in \mathcal V$ is,
\[
\mathcal L(x,t,\lambda) = g(x) + t + \sum_{v \in \mathcal V} \lambda_v (v^\top x - t).
\]
The pair $((x,t), \lambda)$ are primal-dual optimal for this problem {\it iff} they satisfy the KKT conditions \cite{Optimization}:
\bit
\item \emph{Optimality.}
\[
0 \in \partial_x \mathcal L(x,t,\lambda) \implies \sum_{v \in \mathcal V} \lambda_v v \in -\partial g(x),
\qquad
0 = \frac d {dt} \mathcal L(x,t,\lambda) \implies \sum_{v \in \mathcal V} \lambda_v = 1.\\
\]
\item \emph{Primal feasibility.} $t \geq v^\top x$ for each $v \in \mathcal V$.
\item \emph{Dual feasibility.} $\lambda_v \geq 0$ for each $v \in \mathcal V$.
\item \emph{Complementary slackness.} $\lambda_v (v^\top x - t) = 0$ for each $v \in \mathcal V$.
\eit
The requirement that $\lambda$ lie in the simplex emerges naturally from the optimality conditions,
and reduces the Lagrangian to $\mathcal{L}(x, \lambda) = g(x) + (\sum_{v \in \mathcal V} \lambda_v v)^\top x$.
One can introduce the variable $w = \sum_{v \in \mathcal V} \lambda_v v \in \conv( \mathcal V)$,
which is dual feasible so long as $w \in \conv(\mathcal V)$.
We can rewrite the Lagrangian in terms of $x$ and $w \in \conv(\mathcal V)$ as
\(
L(x,w) = g(x) + w^\top x.
\)
Minimizing $L(x,w)$ over $x$, we obtain the dual problem
\begin{equation}
\tag{\textit{D(V)}}
\bopt
\maximize & -g^*(-w) \\
\st & w \in \conv(\mathcal V).
\eopt
\label{eq:dual-subproblem}
\end{equation}
Note \eqref{eq:dual} is the same as \eqref{eq:dual-subproblem} if $\mathcal V = \mathrm{vert}(B(F))$ and $h(w) = -g^*(-w)$, the Fenchel conjugate of $g$.
Notice that $g^*$ is smooth if $g$ is strongly convex
(\cref{lemma:convexity_smoothness} in Appendix \ref{app:convexity}).

\begin{theorem}[Strong Duality]
The primal problem \eqref{eq:primal-subproblem} and the dual problem \eqref{eq:dual-subproblem} have the same finite optimal value. \label{thm:strong-duality-subproblems}
\end{theorem}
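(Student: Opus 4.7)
The plan is to reformulate $(P(\mathcal V))$ as the unconstrained convex program $\min_x g(x) + f_{\mathcal V}(x)$ with $f_{\mathcal V}(x) \overset{\Delta}{=} \max\{v^\top x : v \in \mathcal V\}$, and then invoke Fenchel-Rockafellar duality, exactly as in the Borwein-Lewis reference cited earlier for the analogous statement over the full base polytope.

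First I would establish attainment of both optima. Since $g$ is closed, strongly convex, and proper, and $f_{\mathcal V}$ is convex, continuous, and real-valued on all of $\mathbb{R}^n$, the sum $g + f_{\mathcal V}$ is closed, strongly convex, and coercive, so it admits a unique minimizer $x^\star$; setting $t^\star \overset{\Delta}{=} f_{\mathcal V}(x^\star)$, the pair $(x^\star,t^\star)$ is the unique primal optimum with finite value $g(x^\star) + t^\star$. On the dual side, $\conv(\mathcal V)$ is the convex hull of finitely many vectors, hence compact, and strong convexity of $g$ makes $g^*$ finite and smooth on $\mathbb{R}^n$ by \cref{lemma:convexity_smoothness}, so the continuous concave objective $-g^*(-\cdot)$ attains its supremum at some $w^\star \in \conv(\mathcal V)$.

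The core step is the equality of optimal values. The key observation is that $f_{\mathcal V}$ is exactly the support function of the compact set $\conv(\mathcal V)$, so its Fenchel conjugate is the indicator $f_{\mathcal V}^* = I_{\conv(\mathcal V)}$. Fenchel-Rockafellar duality then yields
\[
\inf_x \bigl[g(x) + f_{\mathcal V}(x)\bigr] \;=\; \sup_w \bigl[-g^*(-w) - I_{\conv(\mathcal V)}(w)\bigr] \;=\; \sup_{w \in \conv(\mathcal V)} -g^*(-w),
\]
and the required constraint qualification $\mathop{\mathrm{ri}}(\mathop{\mathrm{dom}} g) \cap \mathop{\mathrm{ri}}(\mathop{\mathrm{dom}} f_{\mathcal V}) \neq \emptyset$ is automatic because $\mathop{\mathrm{dom}} f_{\mathcal V} = \mathbb{R}^n$ and $g$ is proper.

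As a self-contained alternative, one can construct $w^\star$ directly from the KKT conditions derived in the Lagrangian analysis above. Existence of $(x^\star, t^\star)$ together with the affine nature of the constraints produces multipliers $\lambda$ in the simplex satisfying $w^\star \overset{\Delta}{=} \sum_v \lambda_v v \in -\partial g(x^\star) \cap \conv(\mathcal V)$ with complementary slackness $\lambda_v(v^\top x^\star - t^\star) = 0$. The Fenchel identity \eqref{lemma:Fenchelpair} then gives $g^*(-w^\star) = -w^{\star\top} x^\star - g(x^\star)$, while summing complementary slackness against $\lambda$ and using $\sum_v \lambda_v = 1$ shows $w^{\star\top} x^\star = t^\star$; combining these, $-g^*(-w^\star) = g(x^\star) + t^\star$, matching the primal value. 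The only genuinely subtle point is the constraint qualification, but this collapses to the triviality $\mathop{\mathrm{dom}} g \neq \emptyset$ once we observe that $f_{\mathcal V}$ is finite everywhere, so I do not anticipate a real obstacle.
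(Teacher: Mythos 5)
Your proposal is correct, and your primary route is genuinely different from the paper's. The paper first proves weak duality via the minimax inequality (\cref{thm:weak-duality-subproblems}) and then closes the gap by hand: it takes the unique primal minimizer $\bar x$ (\cref{lemma:uniqueness}), uses $0 \in \partial g(\bar x) + \partial f_{\mathcal V}(\bar x)$ to extract a point $\bar w \in -\partial g(\bar x)\cap \partial f_{\mathcal V}(\bar x) \subseteq \conv(\mathcal V)$, and applies the Fenchel identity \eqref{lemma:Fenchelpair} together with $f_{\mathcal V}(\bar x) = \bar w^{\top}\bar x$ to show that $-g^*(-\bar w)$ equals the primal value. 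Your ``self-contained alternative'' is essentially this same argument in Lagrangian/KKT dress. Your main argument instead recognizes $f_{\mathcal V}$ as the support function of the compact set $\conv(\mathcal V)$, so that its conjugate is the indicator function of $\conv(\mathcal V)$, and invokes Fenchel--Rockafellar duality with the qualification trivially satisfied because $\mathop{\mathrm{dom}} f_{\mathcal V} = \mathbb{R}^n$. That buys equality of optimal values and dual attainment in one stroke, without a separate weak-duality step, at the cost of citing a black-box theorem; the paper's construction is more elementary and, importantly for the rest of the paper, exhibits the dual optimum explicitly as an element of $-\partial g(\bar x)\cap\partial f_{\mathcal V}(\bar x)$, which is what feeds \cref{thm:primal-from-dual} and the L-KM/L-FCFW correspondence. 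One small presentational point on your alternative route: after deriving $-g^*(-w^\star) = g(x^\star)+t^\star$ you still need weak duality to conclude that $w^\star$ actually maximizes the dual (rather than merely matching the primal value while some other feasible point exceeds it); this inequality is exactly what \cref{thm:weak-duality-subproblems} supplies and also follows from your own Lagrangian setup, so it is not a gap, but it should be stated.
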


By analyzing the KKT conditions, we obtain the following result,
which we will used later in the design of our algorithms.
\begin{lemma} 
\label{lemma:comp-slackness}
Suppose $(x,\lambda)$ solve (\eqref{eq:primal-subproblem}, \eqref{eq:dual-subproblem}) and $t = \max_{v \in \mathcal V} v^\top x$.
By complementary slackness,
\beas
\lambda_v > 0 & \implies & v^\top x = t, \\
\text{and in particular,}~~ \{v: \lambda_v > 0 \} & \subseteq & \{v: v^\top x = t\}.
\eeas
\end{lemma}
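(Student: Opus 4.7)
The plan is to deduce the lemma directly from the KKT conditions listed just before it, since these have already been established as necessary and sufficient for primal-dual optimality of \eqref{eq:primal-subproblem}--\eqref{eq:dual-subproblem}. The two claims in the lemma are equivalent (the set inclusion is just the pointwise implication restated), so it suffices to prove the implication $\lambda_v > 0 \implies v^\top x = t$ for the chosen $t = \max_{v \in \mathcal V} v^\top x$.

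The only real substantive step is to verify that this $t$ is the same quantity that appears in the KKT conditions above. For this, I would argue that if $(x,t^{\star},\lambda)$ is primal-dual optimal for the original formulation of \eqref{eq:primal-subproblem}, then primal feasibility forces $t^{\star} \geq v^\top x$ for every $v \in \mathcal{V}$, while minimizing $g(x)+t$ in the $t$-variable makes the bound tight: $t^{\star} = \max_{v \in \mathcal{V}} v^\top x$. Hence $t^{\star}$ coincides with the $t$ defined in the lemma statement.

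Having identified the two, I apply complementary slackness $\lambda_v (v^\top x - t) = 0$ for each $v \in \mathcal V$ directly: whenever $\lambda_v > 0$, we must have $v^\top x - t = 0$, i.e.\ $v^\top x = t$. Taking the union over all such $v$ yields the set inclusion $\{v:\lambda_v > 0\} \subseteq \{v:v^\top x = t\}$.

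I do not anticipate any real obstacle. The only mild care needed is the identification of the optimal $t^\star$ with $\max_{v\in\mathcal V}v^\top x$, which is a one-line argument from primal feasibility together with the fact that the objective is strictly increasing in $t$. Everything else is an immediate rewriting of the complementary-slackness KKT condition, and the lemma could be stated as a single-sentence corollary of it.
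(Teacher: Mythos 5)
Your proposal is correct and matches the paper's own (essentially one-line) justification: the lemma is stated as an immediate consequence of the complementary slackness condition $\lambda_v(v^\top x - t) = 0$, and your additional observation that the optimal $t^\star$ equals $\max_{v\in\mathcal V} v^\top x$ (since the objective is increasing in $t$ and primal feasibility gives the lower bound) is exactly the small identification needed to make that invocation rigorous.
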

Notice $\{v: v^\top x = t\}$ is the active set of {\sc L-KM}.
We will see $\{v: \lambda_v > 0 \}$ is the (minimal) active set of the dual method {\sc L-FCFW}.
(If strict complementary slackness holds, these sets are the same.)

The first KKT condition shows how to move between
primal and dual optimal variables.
\begin{theorem}
\label{thm:primal-from-dual}
If $g: \mathbb{R}^n \to \mathbb{R}$ is strongly convex and $w^\star$ solves \eqref{eq:dual-subproblem}, then
\beq \label{eq:primal-from-dual}
x^\star = (\partial g)^{-1}(-w^\star) = \nabla g^*(-w^\star)
\eeq
solves \eqref{eq:primal-subproblem}.
If in addition $g$ is smooth and $x^\star$ solves \eqref{eq:primal-subproblem}, then $w^\star = \nabla g(x^\star)$ solves \eqref{eq:dual-subproblem}.
\end{theorem}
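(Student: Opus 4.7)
The plan is to combine the Fenchel--Young equality \eqref{lemma:Fenchelpair} with the strong duality result just established in Theorem \ref{thm:strong-duality-subproblems}, so that once a candidate pair is constructed, verifying equal objective values at primal and dual immediately certifies joint optimality.

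For the forward direction, suppose $w^\star$ solves \eqref{eq:dual-subproblem}. Since $g$ is strongly convex, $g^*$ is smooth (Appendix \ref{app:convexity}), so $x^\star := \nabla g^*(-w^\star)$ is well-defined. By \eqref{lemma:Fenchelpair} this is equivalent to $-w^\star \in \partial g(x^\star)$ together with the Fenchel equality $g(x^\star) + g^*(-w^\star) = -w^{\star\top}x^\star$. I then need to translate the dual optimality of $w^\star$ into a statement about the piecewise-linear approximation $\max_{v\in\mathcal V}v^\top x$. The concave dual objective $h(w) = -g^*(-w)$ is differentiable with $\nabla h(w) = \nabla g^*(-w)$, so the first-order optimality condition of $w^\star$ over $\conv(\mathcal V)$ reads $x^{\star\top}(w - w^\star) \leq 0$ for every $w \in \conv(\mathcal V)$; specializing to vertices yields $v^\top x^\star \leq w^{\star\top} x^\star$ for all $v \in \mathcal V$. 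Combined with the reverse inequality $w^{\star\top} x^\star \leq \max_{v} v^\top x^\star$ that holds because $w^\star$ is a convex combination of vertices, this gives $t^\star := \max_{v\in\mathcal V} v^\top x^\star = w^{\star\top}x^\star$.

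Having secured this bridge, I would finish by evaluating the primal objective at the feasible pair $(x^\star, t^\star)$:
\[
g(x^\star) + t^\star \;=\; g(x^\star) + w^{\star\top}x^\star \;=\; -g^*(-w^\star),
\]
which coincides with the dual optimal value. Theorem \ref{thm:strong-duality-subproblems} then forces $(x^\star, t^\star)$ to be primal optimal, proving the first claim.

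For the converse, assume additionally that $g$ is smooth and that $x^\star$ solves \eqref{eq:primal-subproblem}. Differentiability of $g$ turns the Fenchel relation $-w^\star \in \partial g(x^\star)$ into the single-valued identity that determines $w^\star$ uniquely from $x^\star$ (up to the sign convention of \eqref{eq:primal-from-dual}). The primal optimality condition, combined with the subdifferential calculus for a pointwise maximum, gives $-\nabla g(x^\star) \in \conv\{v \in \mathcal V : v^\top x^\star = t^\star\} \subseteq \conv(\mathcal V)$, so $w^\star$ is dual feasible. The Fenchel equality again yields $-g^*(-w^\star) = g(x^\star) + w^{\star\top}x^\star = g(x^\star) + t^\star = p^\star$, so by strong duality $w^\star$ attains the dual optimum.

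The main obstacle is the middle step: turning the variational inequality that expresses dual optimality of $w^\star$ into the sharp identity $w^{\star\top}x^\star = \max_{v\in\mathcal V} v^\top x^\star$, which is precisely the complementary-slackness content of Lemma \ref{lemma:comp-slackness}. Once this identity is in hand, the rest of the argument is bookkeeping with the Fenchel equality and strong duality.
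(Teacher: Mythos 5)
Your proof is correct and follows essentially the same route as the paper: the heart of both arguments is converting the first-order optimality of $w^\star$ over $\conv(\mathcal V)$ into the complementary-slackness identity $w^{\star\top}x^\star = \max_{v\in\mathcal V} v^\top x^\star$, after which the Fenchel--Young equality closes the argument (the paper certifies optimality by checking the KKT conditions directly, while you certify it by matching primal and dual objective values and invoking duality --- equivalent bookkeeping). You were also right to work with $-\nabla g(x^\star)$ in the converse: the sign written in the theorem's second claim is inconsistent with $x^\star = \nabla g^*(-w^\star)$ from the first, and your explicit ``sign convention'' caveat handles this correctly.
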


\begin{proof}
Check the optimality conditions to prove the result.
By definition, $x^\star$ satisfies the first optimality condition.
To check complementary slackness, we rewrite the condition as
\[
\lambda_v(v^\top x - t) = 0 \quad \forall v \in \mathcal V
\iff
\left(\sum_{v \in \mathcal V} \lambda_v v\right)^\top x = t
\iff
w^\top x = \max_{v \in \mathcal V} v^\top x.
\]
Notice
$(w^\star)^\top (\nabla g^*(-w^\star)) = \max_{v \in \mathcal V} v^\top (\nabla g^*(-w^\star))$ by optimality of $w^\star$, since $v - w^\star$ is a feasible direction for any $v \in \mathcal V$,
proving $x^\star = \nabla g^*(-w^\star)$ solves \eqref{eq:primal-subproblem}.

That the primal optimal variable yields a dual optimal variable via $w^\star = \nabla g(x^\star)$ follows from a similar argument together with ideas from the proof of strong duality in \cref{app:duality}.
\end{proof}

\section{Solving the dual problem} \label{s:solving-dual}

Let's return to the dual problem \eqref{eq:dual}:
maximize a smooth concave function \(h(w) = -g^*(-w)\)
over the polytope $B(F) \subseteq \mathbb{R}^n$.
Linear optimization over this polytope is easy;
hence a natural strategy is to use the Frank-Wolfe method or one of its variants
\cite{Lacoste2015}.
However, since the cost of each linear minimization is not negligible,
we will adopt a Frank-Wolfe variant that makes considerable progress at each iteration
by solving a subproblem of moderate complexity:
{\sc Limited Memory fully corrective Frank-Wolfe} ({\sc L-FCFW}, \cref{alg:fcfw}),
which at every iteration
exactly minimizes the function \(-g^*(-w)\) over the the convex hull of the current
subset of vertices $\mathcal V^{(i)}$.
Here we overload notation intentionally: when \(g\) is smooth and strongly convex, we will see that we can choose
the set of vertices \(\mathcal{V}^{(i)}\) in {\sc L-FCFW} (\cref{alg:fcfw}) so that the algorithm matches either {\sc L-KM} or {\sc OSM}
depending on the choice of \(\mathcal{B}^{(i)}\) (\cref{line:choose_B} of {\sc L-FCFW}). For details of the duality between {\sc L-KM} and {\sc L-FCFW} see \cref{sec:convergence}.

\begin{algorithm}[!t]
\footnotesize
\caption{{\sc L-FCFW}: Limited Memory Fully Corrective Frank Wolfe for \eqref{eq:dual} \label{alg:fcfw}}
\begin{algorithmic}[1]

\Require smooth concave function \(h: \mathbb{R}^n \rightarrow \mathbb{R}\),
submodular function \(F: 2^n \rightarrow \mathbb{R}\),
tolerance \(\epsilon \geq 0\)
\Ensure $\epsilon$-suboptimal solution \(w^{\sharp}\) to \eqref{eq:dual}
\State initialize:
set \(\emptyset \subset \mathcal{V}^{(0)} \subseteq \mathrm{vert}(B(F))\)
\For {$i=1,2,\ldots$}
 \State \textbf{Convex subproblem.} Solve
 \[
 w^{(i)} = \argmax \{h(w):  w \in \conv(\mathcal V^{(i-1)}) \}.
 \]
 For each $v \in \mathcal V^{(i)}$, define $\lambda_v \geq 0$ so that
 $w^{(i)} = \sum_{v \in \mathcal V^{(i)}} \lambda_v v$ and $\sum_{v \in \mathcal V^{(i)}} \lambda_v=1$.

 \State \textbf{Submodular subproblem.} Compute gradient $x^{(i)} = \nabla h(w^{(i)})$ and solve
 \[
 v^{(i)} = \argmax \{w^\top x^{(i)}: w \in B(F)\}.
 \]

\State \textbf{Stopping condition.}
Break if duality gap \(p^{(i)} - d^{(i)} \leq \epsilon\), where
    \[
    p^{(i)} = (v^{(i)})^\top x^{(i)},
    \qquad
    d^{(i)} = (w^{(i)})^\top x^{(i)}.
    \]
  \State \textbf{Update memory.} Identify a supserset of active vertices \(\mathcal{B}^{(i)}\) and update memory \(\mathcal{V}^{(i)}\):
 \[
 \mathcal{B}^{(i)} \supseteq \{w \in \mathcal{V}^{(i-1)}: \lambda_w > 0 \} \label{line:choose_B}
 \qquad
 \mathcal{V}^{(i)} = \mathcal{B}^{(i)} \cup \{v^{(i)}\}.
 \]
 \label{condition_for_B}
\EndFor
\State \Return \(w^{(i)}\)

\end{algorithmic}
\end{algorithm}

\paragraph{Limited memory.}
In {\sc L-FCFW}, we may choose any active set
$\mathcal{B}^{(i)} \supseteq \{w \in \mathcal{V}^{(i-1)}: \lambda_w > 0 \}$.
When $\mathcal{B}^{(i)} = \mathcal{V}^{(i-1)}$, we call the algorithm (vanilla) {\sc FCFW}.
When $\mathcal{B}^{(i)}$ is chosen to be small, we call the algorithm
{\sc Limited Memory FCFW} ({\sc L-FCFW}).
Standard {\sc FCFW} increases the size of the active set at each iteration,
whereas the most limited memory variant of {\sc L-FCFW} uses only those vertices
needed to represent the iterate $w^{(i)}$.

Moreover, recall Carath\'{e}odory's theorem (see e.g. \cite{vershynin2018high}): 
for any set of vectors $\mathcal{V}$, if $x \in \conv(\mathcal{V}) \subseteq \reals^n$, then
there exists a subset $A \subseteq \mathcal{V}$ with $|A|\leq n+1$ such that $x \in \conv(A)$.
Hence we see we can choose $\mathcal B^{(i)}$ to contain at most $n+1$ vertices at each iteration (hence \(n+2\) in \(\mathcal{V}^{(i)}\)),
or even fewer if the iterate lies on a low-dimensional face of $B(F)$.
(The size of $\mathcal B^{(i)}$ may depend on the solver used for \eqref{eq:dual-subproblem};
to reduce the size of $\mathcal B^{(i)}$, we can minimize a random linear
objective over the optimal set of \eqref{eq:dual-subproblem} as in
\cite{udell2016bounding}.)

\paragraph{Linear convergence.}
Lacoste-Julien and Jaggi \cite{Lacoste2015} show that {\sc FCFW}
converges linearly to an $\epsilon$-suboptimal solution
when $g$ is smooth and strongly convex
so long as the active set $\mathcal B^{(i)}$ and iterate $x^{(i)}$
satisfy three conditions they call \emph{approximate correction}($\epsilon$):
\begin{enumerate}
\item \textbf{Better than {\sc FW}.} \(h(y^{(i)}) \leq \min_{\lambda \mathop{\in} [0, 1]}h((1-\lambda)w^{(i-1)} + \lambda v^{(i-1)}))\).
\item \textbf{Small away-step gap.} \(\max\{(w^{(i)} - v)^{\top} x^{(i)}: v \mathop{\in} \mathcal{V}(w^{(i)})\} \leq \epsilon\), where $\mathcal{V}(w^{(i)}) = \{v \in \mathcal V^{(i-1)}: \lambda_v > 0\}$.
\item \textbf{Representation.} $x^{(i)} \in \conv(\mathcal B^{(i)})$.
\end{enumerate}
By construction, iterates of {\sc L-FCFW} always satisfy these conditions with $\epsilon=0$:
\begin{enumerate}
\item \textbf{Better than FW.} For any $\lambda \in [0,1]$, $w = (1-\lambda)w^{(i-1)} + \lambda v^{(i-1)}$ is feasible.
\item \textbf{Zero away-step gap.} For each $v \mathop{\in} \mathcal{V}^{(i)}$,
if $w^{(i)} = v$, then clearly $(w^{(i)} - v)^{\top}(x^{(i)}) = 0$.
otherwise (if $w^{(i)} \ne v$)
$v - w^{(i)}$ is a feasible direction,
and so by optimality of $w^{(i)}$
$(w^{(i)} - v)^{\top}(x^{(i)}) \leq 0$.
\item \textbf{Representation.} We have $w^{(i)} \in \conv(\mathcal B^{(i)})$ by construction of $\mathcal B^{(i)}$.
\end{enumerate}
Hence we have proved \cref{thm:l-fcfw}:
\begin{theorem}
\label{thm:l-fcfw}
Suppose \(g\) is \(\alpha\)-smooth and \(\beta\)-strongly convex. Let \(M\) be the diameter of \(B(F)\) and \(\delta\) be the pyramidal width\footnote{See Appendix \ref{app:p_width} for definitions of the diameter and pyramidal width.} of \(P\), then the lower bounds \(d^{(i)}\) in {\sc L-FCFW} (\cref{alg:fcfw}) converges linearly at the rate of \(1-\rho\), i.e. \(p^{\star} - d^{(i+1)} \leq(1-\rho)(p^{\star} - d^{(i)})\), where \(\rho \overset{\Delta}{=}\frac{\beta}{4\alpha}(\frac{\delta}{M})^2\).
\label{convergence1}
\end{theorem}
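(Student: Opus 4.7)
The plan is to reduce the linear convergence of {\sc L-FCFW} to the known linear convergence guarantee for Fully-Corrective Frank-Wolfe with approximate correction due to Lacoste-Julien and Jaggi \cite{Lacoste2015}. Their result yields exactly the rate $1-\rho$ with $\rho = \tfrac{\beta}{4\alpha}(\delta/M)^2$ whenever the iterates satisfy the three conditions of approximate correction($\epsilon$) with $\epsilon = 0$. So the entire proof reduces to verifying those three conditions for {\sc L-FCFW}, which is already sketched in the preamble of the theorem; my task is to formalize each verification carefully.

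First I would verify the ``better than FW'' condition. Since $w^{(i-1)} \in \conv(\mathcal{V}^{(i-1)})$ and $v^{(i-1)} \in \mathcal{V}^{(i-1)} \subseteq \conv(\mathcal{V}^{(i-1)})$, the segment $(1-\lambda) w^{(i-1)} + \lambda v^{(i-1)}$ lies in $\conv(\mathcal{V}^{(i-1)})$ for every $\lambda \in [0,1]$; hence by exactness of the convex subproblem, $h(w^{(i)}) \geq \max_{\lambda \in [0,1]} h((1-\lambda) w^{(i-1)} + \lambda v^{(i-1)})$. Second, the zero away-step gap: fix any $v \in \mathcal{V}^{(i-1)}$ with $\lambda_v > 0$ in the decomposition of $w^{(i)}$; if $v = w^{(i)}$ the gap is trivially zero, and otherwise $v - w^{(i)}$ is a feasible direction at $w^{(i)}$ (one can move a small positive step while staying in $\conv(\mathcal{V}^{(i-1)})$), so first-order optimality of $w^{(i)}$ with respect to $h$ gives $\nabla h(w^{(i)})^\top (v - w^{(i)}) \leq 0$, i.e. $(w^{(i)} - v)^\top x^{(i)} \leq 0$. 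Third, the representation condition: by the construction of $\mathcal{B}^{(i)}$ in \cref{line:choose_B}, every $v$ with $\lambda_v > 0$ is kept, so $w^{(i)} = \sum_v \lambda_v v \in \conv(\mathcal{B}^{(i)}) \subseteq \conv(\mathcal{V}^{(i)})$, which is the required invariant for the next iteration.

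With all three conditions holding at $\epsilon = 0$, the Lacoste-Julien--Jaggi theorem applies directly to the sequence of iterates $w^{(i)}$, yielding $p^\star - d^{(i+1)} \leq (1 - \rho)(p^\star - d^{(i)})$ with $\rho = \tfrac{\beta}{4\alpha}(\delta/M)^2$. The main subtlety---and the reason the result is nontrivial for a limited memory variant---is that aggressive deletion of vertices in passing from $\mathcal{V}^{(i-1)}$ to $\mathcal{V}^{(i)}$ could in principle break the representation condition, but it does not because {\sc L-FCFW} retains precisely the support of the current iterate. A minor bookkeeping point I would double-check is that the constants $\alpha$ (smoothness of $g$) and $\beta$ (strong convexity of $g$) translate correctly to the smoothness/strong convexity of $h = -g^*(-\cdot)$ used in \cite{Lacoste2015}, via \cref{lemma:convexity_smoothness} (i.e., $h$ is $1/\beta$-smooth and $1/\alpha$-strongly concave), so that the final rate still reads $\tfrac{\beta}{4\alpha}(\delta/M)^2$ rather than its reciprocal analogue.
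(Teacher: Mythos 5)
Your proposal is correct and follows essentially the same route as the paper: the paper likewise proves \cref{thm:l-fcfw} by verifying that the {\sc L-FCFW} iterates satisfy the three approximate-correction($\epsilon$) conditions of Lacoste-Julien and Jaggi with $\epsilon = 0$ (feasibility of the Frank--Wolfe segment, zero away-step gap via first-order optimality of $w^{(i)}$ over $\conv(\mathcal{V}^{(i-1)})$, and representation by construction of $\mathcal{B}^{(i)}$) and then invoking their linear rate. Your added check that the constants transfer correctly through $h = -g^*(-\cdot)$ (so that $\mu/L = (1/\alpha)/(1/\beta) = \beta/\alpha$ and the rate reads $\tfrac{\beta}{4\alpha}(\delta/M)^2$) is a worthwhile detail the paper leaves implicit.
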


\paragraph{Primal-from-dual algorithm.}
Recall that dual iterates yield primal iterates via \cref{thm:primal-from-dual}.
Hence the gradients $x^{(i)} = -\nabla g^*(-w^{(i)})$ computed by
{\sc L-FCFW} converge linearly to the solution $x^\star$ of \eqref{eq:primal}.
However, it is difficult to run {\sc L-FCFW} directly to solve \eqref{eq:dual}
given only access to $g$,
since in that case computing $g^*$ and its gradient requires solving another optimization problem;
moreover, we will see below that {\sc L-KM} computes the same iterates.
See \cref{s:primal-from-dual} for more discussion.

\journal{Consider approximate subproblem solves. 
Show how to ensure conditions for ``approximate correction'', possibly by taking a few away steps (?)
}

\section{L-KM (and OSM) converge linearly}\label{sec:convergence}

{\sc L-KM} (\cref{alg:L-KM}) and {\sc L-FCFW} (\cref{alg:fcfw}) are dual algorithms in the following strong sense:
\begin{theorem}
\label{thm:parallelism}
Suppose \(g\) is \(\alpha\)-smooth and \(\beta\)-strongly convex. In {\sc L-FCFW} (\cref{alg:fcfw}), suppose we choose $\mathcal B^{(i)} = \mathcal A^{(i)} = \{v \in \mathcal V^{(i-1)}: v^\top x^{(i)} = {w^{(i)}}^\top x^{(i)} \}$.
Then
\ben
\item The primal iterates $x^{(i)}$ of {\sc L-KM} and {\sc L-FCFW} match.
\item The sets \(\mathcal{V}^{(i)}\) used at each iteration of {\sc L-KM} and {\sc L-FCFW} match.
\item The upper and lower bounds $p^{(i)}$ and $d^{(i)}$ of {\sc L-KM} and {\sc L-FCFW} match.
\een
\end{theorem}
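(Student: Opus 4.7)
The plan is to prove all three claims simultaneously by induction on the iteration index $i$. Initialize both algorithms with the same $\mathcal V^{(0)}$, so the base case is immediate. Assume inductively that the memory $\mathcal V^{(i-1)}$ coincides in both runs; I will then derive in order (1) agreement of the primal iterates $x^{(i)}$ and the submodular subproblem outputs $v^{(i)}$, (2) agreement of the updated memory $\mathcal V^{(i)}$, and (3) agreement of the recorded upper and lower bounds.

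For claim (1), both algorithms now confront the primal--dual pair \eqref{eq:primal-subproblem} and \eqref{eq:dual-subproblem} over the common memory $\mathcal V = \mathcal V^{(i-1)}$: {\sc L-KM} minimizes $g(x) + f_{(i)}(x)$ directly, while {\sc L-FCFW} maximizes $h$ over $\conv(\mathcal V^{(i-1)})$ and forms $x^{(i)} = \nabla h(w^{(i)}) = \nabla g^*(-w^{(i)})$. \cref{thm:primal-from-dual} asserts exactly that this $\nabla g^*(-w^{(i)})$ is a primal optimum of \eqref{eq:primal-subproblem}, and strong convexity of $g$ makes the primal optimum unique, so the two $x^{(i)}$ coincide. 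Under any fixed tie-breaking rule, the submodular subproblem $\argmax_{w \in B(F)} w^{\top} x^{(i)}$ then returns a common $v^{(i)}$.

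For claim (2), the theorem picks $\mathcal B^{(i)} = \mathcal A^{(i)} = \{v \in \mathcal V^{(i-1)}: v^{\top} x^{(i)} = (w^{(i)})^{\top} x^{(i)}\}$; I must show that this coincides with {\sc L-KM}'s active set $\{w \in \mathcal V^{(i-1)}: w^{\top} x^{(i)} = f_{(i)}(x^{(i)})\}$ and is admissible in the sense of \cref{alg:fcfw}. First-order optimality of $w^{(i)}$ over $\conv(\mathcal V^{(i-1)})$ gives $(v - w^{(i)})^{\top} x^{(i)} \leq 0$ for every $v \in \mathcal V^{(i-1)}$, and since $w^{(i)}$ is itself a convex combination of such $v$,
\[
(w^{(i)})^{\top} x^{(i)} = \max_{v \in \mathcal V^{(i-1)}} v^{\top} x^{(i)} = f_{(i)}(x^{(i)}).
\]
This identifies the two definitions of $\mathcal A^{(i)}$; the admissibility inclusion $\{v: \lambda_v > 0\} \subseteq \mathcal A^{(i)}$ is exactly \cref{lemma:comp-slackness}. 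Appending the common $v^{(i)}$ then yields matching $\mathcal V^{(i)}$.

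For claim (3), I reinterpret {\sc L-FCFW}'s bookkeeping as bounds on the common optimal value $p^{\star} = d^{\star}$. The natural concavity bounds on $d^{\star}$ produced by {\sc L-FCFW} at iteration $i$ are
\[
\text{upper: } h(w^{(i)}) + (v^{(i)} - w^{(i)})^{\top} x^{(i)}, \qquad \text{lower: } h(w^{(i)}),
\]
using $(v^{(i)})^{\top} x^{(i)} = \max_{w \in B(F)} \nabla h(w^{(i)})^{\top} w$. The Fenchel--Young identity, triggered by $-w^{(i)} \in \partial g(x^{(i)})$ (equivalent to $x^{(i)} = \nabla g^*(-w^{(i)})$), gives
\[
h(w^{(i)}) = -g^*(-w^{(i)}) = g(x^{(i)}) + (w^{(i)})^{\top} x^{(i)},
\]
and combining with $(v^{(i)})^{\top} x^{(i)} = f(x^{(i)})$ and $(w^{(i)})^{\top} x^{(i)} = f_{(i)}(x^{(i)})$ rewrites the dual bounds as $g(x^{(i)}) + f(x^{(i)})$ and $g(x^{(i)}) + f_{(i)}(x^{(i)})$, exactly {\sc L-KM}'s $p^{(i)}$ and $d^{(i)}$. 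The main obstacle is precisely this reinterpretation: the raw scalars $(v^{(i)})^{\top} x^{(i)}$ and $(w^{(i)})^{\top} x^{(i)}$ that appear in {\sc L-FCFW}'s stopping test differ from genuine bounds on $p^{\star}$ by the common additive constant $g(x^{(i)})$, and Fenchel--Young is the identity that supplies that constant and makes the two algorithms' upper and lower bounds literally identical.
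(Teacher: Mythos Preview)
Your proof is correct and follows the same inductive skeleton as the paper: assume $\mathcal V^{(i-1)}$ agrees, invoke \cref{thm:primal-from-dual} together with uniqueness (strong convexity of $g$ and of $g^*$) to match $x^{(i)}$, identify $(w^{(i)})^{\top}x^{(i)} = f_{(i)}(x^{(i)})$ via first-order optimality to match the active sets, and use \cref{lemma:comp-slackness} (the paper uses the equivalent \cref{w_in_A}) to check admissibility of $\mathcal B^{(i)}$.

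You are in fact more careful than the paper on claim (3). The paper simply appeals to strong duality to say ``$d^{(i)}$ matches'', whereas you correctly observe that the scalars literally recorded by \cref{alg:fcfw}, namely $(v^{(i)})^{\top}x^{(i)}$ and $(w^{(i)})^{\top}x^{(i)}$, differ from {\sc L-KM}'s $p^{(i)}$ and $d^{(i)}$ by the common additive term $g(x^{(i)})$, and that the Fenchel--Young identity $h(w^{(i)}) = g(x^{(i)}) + (w^{(i)})^{\top}x^{(i)}$ is precisely what reconciles them. This makes explicit that the duality \emph{gaps} (and hence the stopping behavior) coincide even though the raw recorded values do not; the paper's proof glosses over this point.
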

\begin{corollary}
The active sets of {\sc L-FCFW} can be chosen to satisfy $|\mathcal{B}^{(i)}| \leq n+1$.
\end{corollary}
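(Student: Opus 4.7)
The plan is to invoke the two preceding results in sequence: \cref{thm:parallelism}, which identifies the L-FCFW memory with the L-KM memory under a specific choice of $\mathcal{B}^{(i)}$, and \cref{thm:limited}, which caps the L-KM memory at $n+1$ vectors via an affine-independence argument. Since the corollary is stated immediately after \cref{thm:parallelism}, it inherits the standing hypothesis that $g$ is $\alpha$-smooth and $\beta$-strongly convex.

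First I would instantiate \cref{alg:fcfw} with the choice
\[
\mathcal{B}^{(i)} \;=\; \mathcal{A}^{(i)} \;=\; \bigl\{v \in \mathcal{V}^{(i-1)} : v^\top x^{(i)} = (w^{(i)})^\top x^{(i)}\bigr\},
\]
that is, the L-KM active set computed on the current memory. Before I can use this, I must verify that it is an admissible choice for L-FCFW, namely $\mathcal{A}^{(i)} \supseteq \{w \in \mathcal{V}^{(i-1)} : \lambda_w > 0\}$. This inclusion is handed to me by \cref{lemma:comp-slackness} applied to the subproblem $(D(\mathcal{V}^{(i-1)}))$: complementary slackness forces every vertex with positive weight in the optimal convex combination to attain the maximum value of $v^\top x^{(i)}$ over $\mathcal{V}^{(i-1)}$, and that maximum equals $(w^{(i)})^\top x^{(i)} = \sum_v \lambda_v v^\top x^{(i)}$ since $\lambda$ lies in the simplex.

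Next I would invoke \cref{thm:parallelism}, which under precisely this choice of $\mathcal{B}^{(i)}$ guarantees that the memories $\mathcal{V}^{(i)}$ produced by L-FCFW and L-KM coincide at every iteration. Applying \cref{thm:limited} to the L-KM run then yields $|\mathcal{V}^{(i)}| \leq n+1$, and in particular $|\mathcal{V}^{(i-1)}| \leq n+1$. Since $\mathcal{B}^{(i)} = \mathcal{A}^{(i)} \subseteq \mathcal{V}^{(i-1)}$, we conclude $|\mathcal{B}^{(i)}| \leq n+1$.

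The argument is essentially a bookkeeping reduction, so I do not anticipate a genuine technical obstacle. The only subtlety worth flagging is the inductive structure: matching memories at step $i-1$ must feed through the convex subproblem of both algorithms to produce matching $x^{(i)}$ and hence matching active sets at step $i$. This induction is already packaged inside \cref{thm:parallelism}, so no additional work is needed here.
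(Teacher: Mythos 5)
Your argument is correct and is exactly the route the paper intends: the corollary is stated as a consequence of \cref{thm:parallelism}, so one takes $\mathcal{B}^{(i)} = \mathcal{A}^{(i)}$ (admissible by \cref{lemma:comp-slackness}), identifies the L-FCFW memory with the L-KM memory, and applies the affine-independence bound of \cref{thm:limited}. Your verification of the admissibility of $\mathcal{A}^{(i)}$ via complementary slackness is a nice touch that the paper leaves implicit.
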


\begin{theorem}
\label{thm:convergence_lkm}
Suppose \(g\) is \(\alpha\)-strongly convex and let \(M\) be the diameter of \(B(F)\), the duality gap \(p^{(i)}-d^{(i)}\) in {\sc L-KM} (\cref{alg:L-KM}) converges linearly: \(p^{(i)}-d^{(i)} \leq (p^{\star} - d^{(i)}) + M^2/(2\beta)\) when \((p^{\star} - d^{(i)}) \geq M^2/(2\beta)\) and \(p^{(i)}-d^{(i)} \leq M\sqrt{2(p^{\star} - d^{(i)})/\beta}\) otherwise. Note that \(p^{\star} - d^{(i)}\) converges linearly by \cref{thm:l-fcfw}.
\end{theorem}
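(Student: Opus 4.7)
The plan is to reduce the analysis to a Frank--Wolfe gap computation on the dual problem and then run the standard FW smoothness argument. By \cref{thm:parallelism} (taking $\mathcal{B}^{(i)} = \mathcal{A}^{(i)}$), the primal iterates and bounds of L-KM coincide with those of a particular instantiation of L-FCFW, so there exists a sequence $w^{(i)} \in B(F)$ with $x^{(i)} = \nabla h(w^{(i)})$, where $h(w) = -g^*(-w)$. From the complementary-slackness characterization in \cref{lemma:comp-slackness}, $w^{(i)} \in \conv(\mathcal{V}^{(i-1)})$ is supported on vertices that attain the maximum $f_{(i)}(x^{(i)})$, and hence $(w^{(i)})^\top x^{(i)} = f_{(i)}(x^{(i)})$; combined with the Fenchel identity $g(x^{(i)}) + g^*(-w^{(i)}) = -(w^{(i)})^\top x^{(i)}$ this gives $d^{(i)} = g(x^{(i)}) + f_{(i)}(x^{(i)}) = h(w^{(i)})$. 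Since strong duality yields $h(w^\star) = p^\star$, we obtain
\[
p^\star - d^{(i)} = h(w^\star) - h(w^{(i)}),
\qquad
p^{(i)} - d^{(i)} = (v^{(i)} - w^{(i)})^\top x^{(i)} = \langle \nabla h(w^{(i)}), v^{(i)} - w^{(i)} \rangle,
\]
that is, the L-KM duality gap equals the FW gap of $h$ at $w^{(i)}$.

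Second, I would invoke smoothness. Because $g$ is $\beta$-strongly convex, $g^*$ (and therefore $h$) is $1/\beta$-smooth (see \cref{lemma:convexity_smoothness} in \cref{app:convexity}). For any $\gamma \in [0,1]$, the point $w^{(i)} + \gamma(v^{(i)} - w^{(i)})$ lies in $B(F)$ and so is dominated by $h(w^\star)$. Applying the quadratic lower bound from smoothness, and using $\|v^{(i)} - w^{(i)}\| \leq M$, gives
\[
p^\star - d^{(i)} \;\geq\; \gamma\,(p^{(i)} - d^{(i)}) \;-\; \frac{\gamma^2 M^2}{2\beta}.
\]

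Third, I would optimize the right-hand side over $\gamma \in [0,1]$. The unconstrained maximizer is $\gamma^\star = \beta (p^{(i)} - d^{(i)})/M^2$. If $\gamma^\star \leq 1$, plugging in gives $p^\star - d^{(i)} \geq \beta (p^{(i)} - d^{(i)})^2 / (2M^2)$, i.e.\ $p^{(i)} - d^{(i)} \leq M\sqrt{2(p^\star - d^{(i)})/\beta}$. If $\gamma^\star > 1$, setting $\gamma=1$ yields $p^{(i)} - d^{(i)} \leq (p^\star - d^{(i)}) + M^2/(2\beta)$. A short case check using $2\sqrt{ab} \leq a+b$ shows that $\gamma^\star > 1$ forces $p^\star - d^{(i)} > M^2/(2\beta)$, while $p^\star - d^{(i)} < M^2/(2\beta)$ forces $\gamma^\star \le 1$; this splits into exactly the two regimes stated in the theorem. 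The linear convergence of $p^\star - d^{(i)}$ then transfers from \cref{thm:l-fcfw}, establishing linear convergence of the duality gap.

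The main obstacle is the first step: the translation between the L-KM bookkeeping ($f_{(i)}(x^{(i)})$, the active set, the upper bound $g(x^{(i)})+f(x^{(i)})$) and the dual FW picture ($\nabla h(w^{(i)})$, $(v^{(i)}-w^{(i)})^\top x^{(i)}$, $h(w^{(i)})$). Once that correspondence is made precise via \cref{thm:parallelism} and the Fenchel relation, the remainder of the argument is the textbook FW-gap-versus-suboptimality analysis on a smooth concave objective over a compact convex set.
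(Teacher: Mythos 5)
Your proof is correct and follows essentially the same route the paper intends: identify the L-KM duality gap with the Frank--Wolfe gap of $h=-g^*(-\cdot)$ at $w^{(i)}$ via \cref{thm:parallelism} and Fenchel duality, then run the standard smoothness argument (optimizing the step $\gamma$) that relates the FW gap to the suboptimality $p^\star-d^{(i)}$ --- an argument the paper leaves implicit by deferring to the FCFW analysis of Lacoste-Julien and Jaggi. Your case split on $\gamma^\star$ and the AM--GM step correctly recover the two regimes in the statement (including the paper's $\alpha$/$\beta$ notational slip, which you resolve the same way the bounds require).
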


When \(g\) is smooth and strongly convex, {\sc OSM} and vanilla {\sc FCFW} are dual algorithms in the same sense when we choose \(\mathcal{B}^{(i)} = \mathcal{V}^{(i-1)}\). For details of the duality between {\sc OSM} and {\sc L-FCFW} see \cref{app:duality_osm}. Hence we have a similar convergence result for {\sc OSM}:
\begin{theorem}
\label{thm:convergence_osm}
Suppose \(g\) is \(\alpha\)-strongly convex and let \(M\) be the diameter of \(B(F)\), the duality gap \(p^{(i)}-d^{(i)}\) in {\sc OSM} converges linearly: \(p^{(i)}-d^{(i)} \leq (p^{\star} - d^{(i)}) + M^2/(2\beta)\) when \((p^{\star} - d^{(i)}) \geq M^2/(2\beta)\) and \(p^{(i)}-d^{(i)} \leq M\sqrt{2(p^{\star} - d^{(i)})/\beta}\) otherwise.
\end{theorem}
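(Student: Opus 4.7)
The plan is to mirror the proof of \cref{thm:convergence_lkm} for {\sc L-KM}, exploiting the fact that {\sc OSM} is dual to vanilla {\sc FCFW} in exactly the same way that {\sc L-KM} is dual to {\sc L-FCFW}. All the analytic machinery we need is already in place: linear convergence of {\sc L-FCFW} (\cref{thm:l-fcfw}), the primal-from-dual correspondence (\cref{thm:primal-from-dual}), and the two-case bound on the duality gap used in the {\sc L-KM} proof.

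First, I would establish the analogue of \cref{thm:parallelism} relating {\sc OSM} and vanilla {\sc FCFW}. Run {\sc L-FCFW} with the specific choice $\mathcal{B}^{(i)} = \mathcal{V}^{(i-1)}$; this is allowed since it trivially satisfies the containment condition $\mathcal{B}^{(i)} \supseteq \{w \in \mathcal{V}^{(i-1)}: \lambda_w > 0\}$, and it produces the memory update $\mathcal{V}^{(i)} = \mathcal{V}^{(i-1)} \cup \{v^{(i)}\}$, which is exactly the {\sc OSM} rule. An induction on $i$ then shows that the memory sets, the primal iterates (via $x^{(i)} = \nabla g^\star(-w^{(i)})$ from \cref{thm:primal-from-dual}, using that $g$ smooth and strongly convex makes this map well-defined and invertible), the submodular subproblem solutions $v^{(i)}$, and the bounds $p^{(i)}$, $d^{(i)}$ agree between the two algorithms at every step. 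The argument is identical to the one sketched for \cref{thm:parallelism} in \cref{s:primal-from-dual}, with the active set $\mathcal{A}^{(i)}$ replaced by the full previous memory $\mathcal{V}^{(i-1)}$.

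Second, with this duality in hand, linear convergence of $p^\star - d^{(i)}$ for {\sc OSM} follows immediately from linear convergence of vanilla {\sc FCFW}. Vanilla {\sc FCFW} is an instance of {\sc L-FCFW}, and all three ``approximate correction'' conditions (better-than-FW, zero away-step gap, and representation) hold with $\epsilon = 0$ by the same verbatim argument given for {\sc L-FCFW} in \cref{s:solving-dual}; the representation condition is in fact trivial here because $w^{(i)} \in \conv(\mathcal{V}^{(i-1)}) = \conv(\mathcal{B}^{(i)})$. Hence \cref{thm:l-fcfw} applies and $p^\star - d^{(i+1)} \leq (1-\rho)(p^\star - d^{(i)})$ with $\rho = \tfrac{\beta}{4\alpha}(\delta/M)^2$ when $g$ is $\alpha$-smooth and $\beta$-strongly convex.

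Third, I would bound the duality gap $p^{(i)} - d^{(i)}$ in terms of $p^\star - d^{(i)}$ exactly as in the proof of \cref{thm:convergence_lkm}. From the definitions,
\[
p^{(i)} - d^{(i)} = f(x^{(i)}) - f_{(i)}(x^{(i)}) = \max_{v \in B(F)} v^\top x^{(i)} - \max_{v \in \mathcal{V}^{(i-1)}} v^\top x^{(i)},
\]
which by the parallelism equals the Frank--Wolfe gap $(v^{(i)} - w^{(i)})^\top x^{(i)}$ on the dual side. Using that $B(F)$ has diameter $M$ and that $h(w) = -g^\star(-w)$ is $(1/\beta)$-smooth under strong convexity of $g$, a standard two-case argument (comparing $p^\star - d^{(i)}$ against $M^2/(2\beta)$) yields the stated bounds. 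Since we have already shown $p^\star - d^{(i)}$ decays geometrically, composing with this bound gives the claim.

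The main obstacle is essentially bookkeeping rather than analysis: one must be careful to check that the {\sc L-FCFW}-to-{\sc OSM} parallelism truly reproduces the {\sc OSM} updates (in particular that no active-set trimming is implicit), after which everything reduces to invoking \cref{thm:l-fcfw} and rerunning the final calculation from the proof of \cref{thm:convergence_lkm} unchanged.
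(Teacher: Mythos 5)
Your proposal is correct and follows the same route as the paper: establish the OSM/vanilla-FCFW parallelism by taking $\mathcal{B}^{(i)} = \mathcal{V}^{(i-1)}$ in {\sc L-FCFW} (the paper's \cref{thm:parallelism_osm}), invoke \cref{thm:l-fcfw} for linear decay of $p^{\star}-d^{(i)}$, and convert the Frank--Wolfe gap $(v^{(i)}-w^{(i)})^{\top}x^{(i)}=p^{(i)}-d^{(i)}$ into the stated two-case bound using the $(1/\beta)$-smoothness of $h=-g^{*}(-\cdot)$ and the diameter $M$. The only caveat is the paper's own notational slip (the hypothesis says ``$\alpha$-strongly convex'' while the bound uses $\beta$), which you resolve in the natural way.
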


\begin{remark}
Note that \(p^{\star} - d^{(i)}\) converges linearly by \cref{thm:l-fcfw}, \cref{thm:convergence_lkm} and \cref{thm:convergence_lkm} imply {\sc L-KM} and {\sc OSM} converge linearly when \(g\) is smooth and strongly convex.
\end{remark}

Moreover, this connection generates a new way to prune the active set of {\sc L-KM} even further
using a primal dual solver:
we may use any active set $\mathcal B^{(i)} \supseteq \{w \in \mathcal V^{(i-1)}: \lambda_w >0\}$,
where $\lambda \in \reals^{|\mathcal V^{(i-1)}|}$ is a dual optimal solution to \eqref{eq:primal-subproblem}.
When strict complementary slackness fails, we can have $\mathcal B^{(i)} \subset \mathcal A^{(i)}$.

\if \oldproof

\section{{\sc L-FCFW}, Connections to {\sc FCFW} and Convergence Analysis}\label{sec:fcfw}
\textcolor{red}{We can call this section: Connections to the Dual and Convergence. Explain the fact that one can go back and forth between primal dual iterates. Leave the proofs as is for now.} 
In this section we present {\sc L-FCFW}, the dual of  {\sc L-KM}, as a special case of the Fully-Corrective Frank-Wolfe algorithm that uses limited memory. This will help us also provide a direct proof of finite convergence of {\sc L-KM}. The results in this section will be applicable to general polytopes, not just the submodular base polytopes. We assume that \(g\) is closed, which implies \({g^{*}}^{*} = g\). 


The dual of {\sc OSM} can be seen as a first-order method which exploits the fact that it is easy to do linear optimization over the base polytope \(B(F)\) \cite{Bach2013}. We use this duality to re-write {L-KM} in the dual form and derive the dual method from scratch for {\sc L-KM} in Appendix \ref{app:dual_method}. {\sc L-FCFW} is thus limited memory like {\sc L-KM}. 
For ease of readability and to better demonstrate the connections between {\sc L-FCFW}  and {\sc FCFW} method over general polytopes, we re-write {\sc L-FCFW} for the minimization of the convex function \(h(w) = g^{*}(-w)\) over the base polytope \(B(F)\) (see \cref{alg:fcfw}). For general polytopes $P$, $B(F)$ can be replaced verbatim with $P$ in Algorithm \cref{alg:fcfw}. 

\begin{algorithm}[!t]\footnotesize
\caption{{\sc L-FCFW}: Dual of Limited memory Kelley's Method}
\begin{algorithmic}[1]
\State \textbf{Input:} a convex function \(h: \mathbb{R}^n \rightarrow \mathbb{R}\), polytope \(B(F) \subseteq \mathbb{R}^{n}\), suboptimality parameter \(\epsilon > 0\)
\State \textbf{Output:} an approximate solution \(w^{\sharp} \in B(F)\) s.t. \(h(w^{\sharp}) - \epsilon \leq \min_{w \mathop{\in} B(F)} h(w)\)
\State \textbf{Initialization:} let \(w^{(0)}\in \text{vert}(B(F))\),\ \(x^{(0)} \in -\partial h(w^{(0)})\), \(v^{(0)} \in \arg \max_{v \mathop{\in} \text{vert}(B(F))} v^{\top}x^{(0)}\), \(\mathcal{V}^{(0)} := \{w^{(0)}\},\ v^{(0)}\), \({\bar{\Delta}}^{(0)} = (v^{(0)} - w^{(0)})^{\top}x^{(0)}\),\ \(i = 1\)
\While {\({\bar{\Delta}}^{(i-1)} > \epsilon\)}
 \State let \(w^{(i)} \in \arg\min_{w \mathop{\in} \mathop{\mathrm{conv}}(\mathcal{V}^{(i-1)})} h(w)\) \label{dual_subproblem}
 \State compute \(x^{(i)}\) \(\in -\partial h(w^{(i)})\)\label{wtox}
 \State compute \(v^{(i)} \in \arg \max_{v \mathop{\in} \text{vert}(B(F))} v^{\top}x^{(i)}\)
  \State let \(\mathcal{A}^{(i)} := \{v \in \mathcal{V}^{(i-1)}: v^{\top}x^{(i)} = {w^{(i)}}^{\top}x^{(i)}\}\)    \Comment set of tight vertices w.r.t subgradient $x^{(i)}$\label{active2}
 \State let \(\mathcal{V}^{(i)} := \mathcal{A}^{(i)}\) \(\cup\) \(\{v^{(i)}\}\)    \Comment update \(\mathcal{V}^{(i)}\)\label{active3}
 \State \(i = i + 1\)
\EndWhile
\State \Return \(w^{(i-1)}\)
\end{algorithmic}
\label{LMdual}
\end{algorithm}
\begin{algorithm}[!t]\footnotesize
\caption{{\sc FCFW}: Fully-Corrective Frank-Wolfe Method with approximate correction \textcolor{red}{(do we need fcfw now explicitly?)}\cite{Lacoste2015}}
\begin{algorithmic}[1]
\State \textbf{Input:} convex function \(h: \mathbb{R}^n \rightarrow \mathbb{R}\), polytope $P \subseteq \mathbb{R}^n$, suboptimality parameter \(\epsilon > 0\)
\State \textbf{Output:} an approximate solution \(w^{\sharp} \in P\) s.t. \(h(w^{\sharp}) - \epsilon \leq \min_{w \mathop{\in} P} h(w)\) 
\State \textbf{Initialization:} let \(w^{(0)}\in \text{vert}(P)\),\ \(x^{(0)} \in -\partial h(w^{(0)})\), \(v^{(0)} \in \arg \max_{v \mathop{\in} \text{vert}(P)} v^{\top}x^{(0)}\), \(\mathcal{V}^{(0)} := \{w^{(0)}\},\ v^{(0)}\), \({\bar{\Delta}}^{(0)} = (v^{(0)} - w^{(0)})^{\top}x^{(0)}\),\ \(i = 1\)
\While {\({\bar{\Delta}}^{(i-1)} > \epsilon\)}
 \State compute \(w^{(i)}\), \(\mathcal{V}^{(i)}\) such that  \label{fcfwconditions}
   \Statex \quad\quad (a). \(w^{(i)} \in\) conv(\(\mathcal{V}^{(i)}\))
   \Statex \quad\quad (b). \(h(w^{(i)}) \leq \min_{\lambda \mathop{\in} [0, 1]}h(w^{(i-1)} + \lambda(v^{(i-1)} - w^{(i-1)}))\) \Comment make some progress with FW step
   \Statex \quad\quad (c). \(\max_{v \mathop{\in} \mathcal{V}^{(i)}} (w^{(i)} - v)^{\top}(x^{(i)}) \leq \epsilon\)    \Comment make the away step gap small enough 
 \State compute \(v^{(i)} \in \arg \max_{v \mathop{\in} \text{vert}(P)} v^{\top}x^{(i)}\)
\State let \(\bar{\Delta}^{(i)} := (v^{(i)} - w^{(i)})^{\top}x^{(i)}\)   \Comment update the optimality gap
 \State \(i = i + 1\)
\EndWhile
\State \Return \(w^{(i-1)}\)
\end{algorithmic}
\label{fcfw}
\end{algorithm}

In the \(i{\mathrm{th}}\) iteration, {\sc L-FCFW} minimizes $h(\cdot)$ over \(\mathop{\mathrm{conv}}(\mathcal{V}^{(i-1)})\) which is a subset of the polytope $B(F)$ (or a general polytope $P$), to get an approximate solution \(w^{(i)}\). When the optimality gap is not desirable, we generate a new vertex $v^{(i)}$ by maximizing the subgradient $x^{(i)}$ of $-h(w^{(i)})$, and define $\mathcal{V}^{(i)}$ by expanding conv(\(\mathcal{A}^{(i)}\)) (the tight set of vertices with respect to $x^{(i)}$) to include $v^{(i)}$.  
Through duality, we have \(\bar{d}^{\star} = -p^{\star}\) and \(h(w^{(i)}) = -d^{(i)}\), i.e. the lower bounds in {\sc L-KM}.

To analyze the rate of convergence for {\sc L-KM}, we present the Fully-Corrective Frank-Wolfe ({\sc FCFW}) Method with approximate correction \cite{Lacoste2015} in \cref{fcfw} and show that {\sc L-FCFW} is a limited memory special case of {\sc FCFW} in \cref{caseoffcfw}.

\begin{theorem}\label{caseoffcfw}
The dual method {\sc L-FCFW} is a special case of {\sc FCFW} with approximate correction as the approximate solution \(w^{(i)}\) and the subset of vertices \(\mathcal{V}^{(i)}\) constructed in {\sc L-FCFW} satisfy the conditions in Line \ref{fcfwconditions}(a)-(c) of \cref{fcfw} in each iteration $i\geq 1$. 
\end{theorem}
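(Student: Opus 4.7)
The plan is to verify each of the three approximate-correction conditions (a), (b), (c) of {\sc FCFW} directly from the construction of \(w^{(i)}\) and \(\mathcal{V}^{(i)}\) in {\sc L-FCFW}. All three will hold with zero slack, which immediately implies they are satisfied for any tolerance \(\epsilon \geq 0\).

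For the representation condition \(w^{(i)} \in \conv(\mathcal{V}^{(i)})\), recall that \(\mathcal{V}^{(i)} = \mathcal{A}^{(i)} \cup \{v^{(i)}\}\) and that \(w^{(i)}\) is the minimizer of \(h\) over \(\conv(\mathcal{V}^{(i-1)})\). The key step is to combine first-order optimality with \(-x^{(i)} \in \partial h(w^{(i)})\) to conclude \(v^\top x^{(i)} \leq (w^{(i)})^\top x^{(i)}\) for every \(v \in \mathcal{V}^{(i-1)}\). Writing \(w^{(i)} = \sum_{v \in \mathcal V^{(i-1)}} \lambda_v v\) as a convex combination and taking the inner product with \(x^{(i)}\) then forces \(v^\top x^{(i)} = (w^{(i)})^\top x^{(i)}\) whenever \(\lambda_v > 0\); that is, the support of the combination lies inside \(\mathcal{A}^{(i)} \subseteq \mathcal{V}^{(i)}\), which gives (a).

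For the Frank-Wolfe progress condition (b), I will argue that the line segment \(w^{(i-1)} + \lambda(v^{(i-1)} - w^{(i-1)})\), \(\lambda \in [0,1]\), lies entirely in \(\conv(\mathcal V^{(i-1)})\): the endpoint \(w^{(i-1)}\) is in this set by the representation condition applied at the previous iteration, while \(v^{(i-1)} \in \mathcal V^{(i-1)}\) by construction. Since \(w^{(i)}\) is the global minimizer of \(h\) over \(\conv(\mathcal V^{(i-1)})\), it is at least as good as any point on this segment, giving the desired inequality.

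For the away-step gap condition (c), I will split the max over \(\mathcal V^{(i)}\) into contributions from \(\mathcal{A}^{(i)}\) and \(\{v^{(i)}\}\). Any \(v \in \mathcal{A}^{(i)}\) satisfies \((w^{(i)} - v)^\top x^{(i)} = 0\) by definition of \(\mathcal{A}^{(i)}\). For the new vertex \(v^{(i)} \in \arg\max_{w \in \mathrm{vert}(B(F))} w^\top x^{(i)}\), feasibility of \(w^{(i)} \in B(F)\) gives \((v^{(i)})^\top x^{(i)} \geq (w^{(i)})^\top x^{(i)}\), so \((w^{(i)} - v^{(i)})^\top x^{(i)} \leq 0\). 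Hence the maximum is nonpositive, and in particular at most \(\epsilon\). The only slightly subtle step is the representation condition (a), which requires converting first-order optimality into a complementary-slackness statement on the combination weights; once that is in place, conditions (b) and (c) follow by direct inspection.
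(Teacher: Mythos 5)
Your proposal is correct and follows essentially the same route as the paper: condition (a) via first-order optimality of \(w^{(i)}\) over \(\conv(\mathcal V^{(i-1)})\) combined with the convex-combination/complementary-slackness argument (the paper isolates this as a separate lemma showing only vertices in \(\mathcal A^{(i)}\) can carry positive weight), condition (b) from \(\conv(\{w^{(i-1)}, v^{(i-1)}\}) \subseteq \conv(\mathcal V^{(i-1)})\) and optimality of \(w^{(i)}\) over that set, and condition (c) by splitting \(\mathcal V^{(i)}\) into \(\mathcal A^{(i)}\) (zero gap by definition) and \(v^{(i)}\) (nonpositive gap since \(v^{(i)}\) maximizes \(w^\top x^{(i)}\) over \(B(F) \ni w^{(i)}\)). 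No gaps to report.
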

\textbf{Proof sketch:} Since {\sc L-FCFW} computes $w^{(i)} \in \arg \min_{v \in \mathop{\mathrm{conv}}(\V^{(i-1)})} h(w)$, by first-order optimality conditions we get $w^{(i)\top}x^{(i)} \geq w^\top x^{(i)}$ for all $w \in \mathop{\mathrm{conv}}(\V^{(i-1)})$ (recall $x^{(i)} \in -\partial h(w^{(i)})$). Since $v \in \A^{(i)}$ for all $v \in \V^{(i-1)}$ such that \(v^\top x^{(i)} = w^{(i)\top}x^{(i)}\), only vertices in $\A^{(i)}$ can have non-zero convex multipliers in the decomposition of $w^{(i)}$ (see {\cref{winA}}, Appendix \ref{app:dual_method}). Therefore, $w^{(i)}\in \mathop{\mathrm{conv}}(\mathcal{A}^{(i)}) \subseteq \text{conv}(\mathcal{V}^{(i)})$ and (a) holds. Condition (b) holds because \(\mathop{\mathrm{conv}}(\{w^{(i-1)},\ v^{(i-1)}\}) \subseteq \mathop{\mathrm{conv}}(\mathcal{V}^{(i-1)})\), over which \(w^{(i)}\) is optimal. Lastly (c) holds because by construction of $\A^{(i)}$ and \(v^{(i)}\), \({({w^{(i)}} - v)}^{\top}x^{(i)}=0\) for \(v \in A^{(i)}\) and \({({w^{(i)}} - v)}^{\top}x^{(i)}<0\) when \(v =  v^{(i)}\).

Therefore, known convergence rates for {\sc FCFW} \cite{Lacoste2015} are applicable to {\sc L-FCFW}: 

\begin{theorem} 
Suppose \(g\) is closed, \(\frac{1}{\mu}\)-strongly smooth and \(\frac{1}{L}\)-strongly convex. Let \(M\) be the diameter and \(\delta\) be the pyramidal width\footnote{See Appendix \ref{app:dual_method} for definitions of the diameter and pyramidal width.} of \(P\), then the lower bounds \(d^{(i)}\) in \cref{alg:L-KM} converges linearly at the rate of \(1-\rho\), i.e. $p^{\star} - d^{(i+1)} \leq(1-\rho)(p^{\star} - d^{(i)})$, where \(\rho \overset{\Delta}{=}\frac{\mu}{4L}(\frac{\delta}{M})^2\).
\label{convergence1}
\end{theorem}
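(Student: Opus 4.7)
The plan is to prove that the OSM duality gap $p^{(i)} - d^{(i)}$ decays linearly by combining two ingredients: (i) linear decay of the objective-value suboptimality $p^\star - d^{(i)}$, obtained by invoking \cref{thm:l-fcfw} through a primal-dual correspondence between OSM and vanilla FCFW, and (ii) a two-case upper bound expressing $p^{(i)} - d^{(i)}$ in terms of $p^\star - d^{(i)}$, derived from smoothness of the dual objective. The theorem statement packages these as the two explicit inequalities; the linear convergence claim follows from their combination.

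First, I would establish that OSM is the dual of the L-FCFW specialization obtained by choosing $\mathcal{B}^{(i)} = \mathcal{V}^{(i-1)}$ (i.e.\ never pruning). This is the content of \cref{app:duality_osm} and mirrors \cref{thm:parallelism}: starting from $w^{(0)}$, use \cref{thm:primal-from-dual} to recover $x^{(0)} = \nabla g^*(-w^{(0)})$, and then inductively match the iterates $x^{(i)}$, memory sets $\mathcal{V}^{(i)}$, and bounds $p^{(i)}, d^{(i)}$ across the two algorithms. The key step is to apply \cref{lemma:comp-slackness} to the dual subproblem \eqref{eq:dual-subproblem} over $\conv(\mathcal{V}^{(i-1)})$, yielding the identity $(w^{(i)})^\top x^{(i)} = f_{(i)}(x^{(i)})$; this shows $d^{(i)} = h(w^{(i)})$ in the OSM scale, so $p^\star - d^{(i)} = h(w^\star) - h(w^{(i)})$. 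Because $\mathcal{B}^{(i)} = \mathcal{V}^{(i-1)}$ trivially satisfies the inclusion $\mathcal{B}^{(i)} \supseteq \{v : \lambda_v > 0\}$ required in step \ref{condition_for_B} of \cref{alg:fcfw}, vanilla FCFW is a legitimate instance of L-FCFW and \cref{thm:l-fcfw} applies, giving $p^\star - d^{(i+1)} \leq (1-\rho)(p^\star - d^{(i)})$.

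Second, I would derive the two-case bound. Using the match from step one, rewrite $p^{(i)} - d^{(i)} = (v^{(i)} - w^{(i)})^\top x^{(i)} = \nabla h(w^{(i)})^\top (v^{(i)} - w^{(i)})$, the Frank-Wolfe gap of $h$ at $w^{(i)}$. Since $g$ is $\beta$-strongly convex, $h = -g^*(-\cdot)$ is $1/\beta$-smooth concave (\cref{lemma:convexity_smoothness} in \cref{app:convexity}). For any $\gamma \in [0,1]$, $w_\gamma := w^{(i)} + \gamma(v^{(i)} - w^{(i)}) \in B(F)$, so by the smoothness inequality and the bound $\|v^{(i)} - w^{(i)}\| \leq M$,
\[
p^\star - d^{(i)} \;\geq\; h(w_\gamma) - h(w^{(i)}) \;\geq\; \gamma(p^{(i)} - d^{(i)}) - \frac{\gamma^2 M^2}{2\beta}.
\]
Evaluating at $\gamma = 1$ yields the first bound $p^{(i)} - d^{(i)} \leq (p^\star - d^{(i)}) + M^2/(2\beta)$ unconditionally. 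When $p^\star - d^{(i)} < M^2/(2\beta)$, this first bound forces $p^{(i)} - d^{(i)} < M^2/\beta$, so the optimizer $\gamma^\star := \beta(p^{(i)} - d^{(i)})/M^2$ lies in $(0, 1]$; plugging it in and rearranging yields the second bound $p^{(i)} - d^{(i)} \leq M\sqrt{2(p^\star - d^{(i)})/\beta}$.

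Combining ingredients, the stated linear convergence of $p^{(i)} - d^{(i)}$ follows: after finitely many iterations the linearly decaying $p^\star - d^{(i)}$ enters the small regime, where the second bound gives $p^{(i)} - d^{(i)} \leq M\sqrt{2 (1-\rho)^{i}(p^\star - d^{(0)})/\beta}$, a linear rate with factor $\sqrt{1-\rho}$. The main obstacle I anticipate is step one: unlike the L-KM/L-FCFW parallelism where pruning keeps $\mathcal{V}^{(i)}$ small and the induction transparent, here the active set grows with $i$, so one must carefully verify that the complementary-slackness identity $(w^{(i)})^\top x^{(i)} = f_{(i)}(x^{(i)})$ persists at every step so that the Frank-Wolfe-gap rewriting of $p^{(i)} - d^{(i)}$ used in step two remains valid.
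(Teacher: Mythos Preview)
You have misread the statement. The theorem concerns the quantity \(p^\star - d^{(i)}\) for the {\sc L-KM} lower bounds (see the conclusion: \(p^{\star} - d^{(i+1)} \leq(1-\rho)(p^{\star} - d^{(i)})\)), not the duality gap \(p^{(i)} - d^{(i)}\), and it is about {\sc L-KM}, not {\sc OSM}. Your entire step~(ii) --- the smoothness-based two-case bound relating \(p^{(i)}-d^{(i)}\) to \(p^\star - d^{(i)}\) --- is a proof of a \emph{different} theorem in the paper (\cref{thm:convergence_lkm}/\cref{thm:convergence_osm}), not of the statement at hand.

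The paper's proof is a one-liner layered on material already established immediately before the theorem: {\sc L-FCFW} (for any admissible choice of \(\mathcal{B}^{(i)}\)) satisfies the three approximate-correction conditions of Lacoste-Julien and Jaggi by construction, so their linear-convergence result for {\sc FCFW} with approximate correction applies directly to the dual iterates \(w^{(i)}\). Since \(d^{(i)} = h(w^{(i)})\) by the {\sc L-KM}/{\sc L-FCFW} parallelism (\cref{thm:parallelism}), the bound transfers verbatim to {\sc L-KM}'s lower bounds. Your step~(i) contains essentially this argument, but routed through {\sc OSM} and vanilla {\sc FCFW} rather than {\sc L-KM} and the pruned {\sc L-FCFW}; that detour is unnecessary and proves the wrong algorithm's bound. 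Also note that you invoke \cref{thm:l-fcfw} to obtain the linear rate, but in this paper \cref{thm:l-fcfw} carries the same label as the statement you are asked to prove --- what you actually need to cite is the external result from \cite{Lacoste2015} together with the verification of the approximate-correction conditions, not the theorem itself.
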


\begin{theorem}
Suppose \(g\) is closed, \(\frac{1}{L}\)-strongly convex and let \(M\) be the diameter of \(P\), the duality gap \(\Delta^{(i)}\) in Algorithm \ref{alg:L-KM} converges sub-linearly: $\Delta^{(i+1)} \leq (p^{\star} - d^{(i)}) + LM^2/2$ when $(p^{\star} - d^{(i)}) \geq LM^2/2$ and $\Delta^{(i)} \leq M\sqrt{2(p^{\star} - d^{(i)})L}$ otherwise.
\end{theorem}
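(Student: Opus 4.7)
The plan is to decompose the duality gap as $p^{(i)} - d^{(i)} = (p^{\star} - d^{(i)}) + (p^{(i)} - p^{\star}) =: A + B$ with $A, B \geq 0$, and bound $B$ in terms of $A$ and the iterate distance $r := \|x^{(i)} - x^{\star}\|$. First I would control $r$ using $\beta$-strong convexity of $g + f_{(i)}$, which has minimizer $x^{(i)}$ and minimum value $d^{(i)}$. Evaluating the strong-convexity inequality at $x^{\star}$ and using $f_{(i)} \leq f$ yields
\[
d^{(i)} + \tfrac{\beta}{2} r^2 \;\leq\; g(x^{\star}) + f_{(i)}(x^{\star}) \;\leq\; g(x^{\star}) + f(x^{\star}) \;=\; p^{\star},
\]
so $r \leq \sqrt{2A/\beta}$.

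Next I would bound $B$ by combining the full strong-convexity subgradient inequality for $g$ with the ordinary subgradient inequality for $f$. Optimality of $x^{(i)}$ in the convex subproblem of \cref{alg:L-KM} supplies $z^{(i)} \in \partial g(x^{(i)})$ and $u^{(i)} \in \partial f_{(i)}(x^{(i)})$ with $z^{(i)} + u^{(i)} = 0$, and I pick any $v^{(i)} \in \partial f(x^{(i)})$. Adding the two subgradient inequalities gives
\[
B = [g(x^{(i)}) - g(x^{\star})] + [f(x^{(i)}) - f(x^{\star})] \;\leq\; \langle z^{(i)} + v^{(i)}, x^{(i)} - x^{\star}\rangle - \tfrac{\beta}{2} r^2 \;=\; \langle v^{(i)} - u^{(i)}, x^{(i)} - x^{\star}\rangle - \tfrac{\beta}{2} r^2.
\]
Because $\partial f_{(i)}(x^{(i)}) = \conv(\mathcal A^{(i)}) \subseteq B(F)$ and $v^{(i)} \in B(F)$, Cauchy--Schwarz with the diameter bound gives $\langle v^{(i)} - u^{(i)}, x^{(i)} - x^{\star}\rangle \leq M r$, hence $B \leq Mr - \tfrac{\beta}{2}r^2$.

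Finally I would maximize $\phi(r) := A + Mr - \tfrac{\beta}{2} r^2$ over the feasible range $r \in [0, \sqrt{2A/\beta}]$. The unconstrained maximum sits at $r_{\star} = M/\beta$ with value $A + M^2/(2\beta)$, which lies in the feasible interval iff $A \geq M^2/(2\beta)$, yielding the first regime of the theorem. Otherwise $\phi$ is strictly increasing on $[0, \sqrt{2A/\beta}]$, so its maximum is at the right endpoint, where the linear term $A$ cancels exactly: $\phi(\sqrt{2A/\beta}) = A + M\sqrt{2A/\beta} - A = M\sqrt{2A/\beta}$, matching the second regime. Linear convergence of $A$ itself is then inherited from \cref{thm:l-fcfw} via the primal--dual correspondence of \cref{thm:parallelism}. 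The key subtlety is to retain the $-\tfrac{\beta}{2}r^2$ term coming from $g$'s strong convexity when bounding $B$: discarding it would leave the additive $A$ uncancelled and destroy the square-root bound in the small-gap regime.
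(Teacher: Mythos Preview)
Your argument is correct. The decomposition $\Delta^{(i)} = A + B$, the strong-convexity bound $r \le \sqrt{2A/\beta}$ from the subproblem objective $g + f_{(i)}$, and the subgradient estimate $B \le Mr - \tfrac{\beta}{2}r^2$ (using that both $u^{(i)} \in \partial f_{(i)}(x^{(i)}) = \conv(\mathcal A^{(i)}) \subseteq B(F)$ and $v^{(i)} \in \partial f(x^{(i)}) \subseteq B(F)$ lie in the base polytope) are all valid, and your case split on whether $M/\beta$ falls in the admissible range for $r$ recovers the two regimes exactly. The observation that the quadratic term cancels the additive $A$ at the endpoint is precisely what makes the square-root bound go through.

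The paper, however, does not include a proof of this theorem: it is stated (in both the older form you were given and the revised form of \cref{thm:convergence_lkm}) without argument, so there is no proof in the text to compare against. Your approach is the natural one and almost certainly what the authors had in mind, since it uses only the primal ingredients already established in \cref{sec:lkm} (strong convexity of the subproblem, $f_{(i)} \le f$, and $\partial f_{(i)}(x^{(i)}) \subseteq B(F)$) together with the diameter bound. One cosmetic point: the index discrepancy in the stated theorem ($\Delta^{(i+1)}$ in the first regime versus $\Delta^{(i)}$ in the second) appears to be a typo in the older version; the revised \cref{thm:convergence_lkm} uses $p^{(i)} - d^{(i)}$ throughout, which is exactly what you bound.
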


\textcolor{red}{(change the theorems. revisit the following statement:)}
Note that the same analysis would not apply to the dual of {\sc OSM} since {\sc OSM} does not support deletion of constraints. Therefore, it is not possible to bound the away step gap in Line \ref{fcfwconditions}(c). 

\fi

\section{Experiments and Conclusion}\label{sec:experiments}
We present in this section a computational study: 
we minimize non-separable composite functions $g+f$ where \(g(x) = x^{\top}(A + n \mathbf{I}_n) x + b^{\top}x\) for \(x \in \mathbb{R}^{n}\), and $f$ is the Lov\'{a}sz extension of the submodular function \(F(A) \mathop{=} \frac{|A|(2n - |A| + 1)}{2}\) for $A \subseteq [n]$. To construct $g(\cdot)$, entries of \(A \in M_{n}\) and \(b \in \mathbb{R}^{n}\) were randomly sampled from $U[-1, 1]^{n \times n}$, and $U[0, n]^n$ respectively.
 $\mathbf{I}_n$ is an $n\times n$ identity matrix.
\begin{figure}[!t]
\centering
 \begin{subfigure}{3.3cm}
  \centering\includegraphics[width=3.3cm]{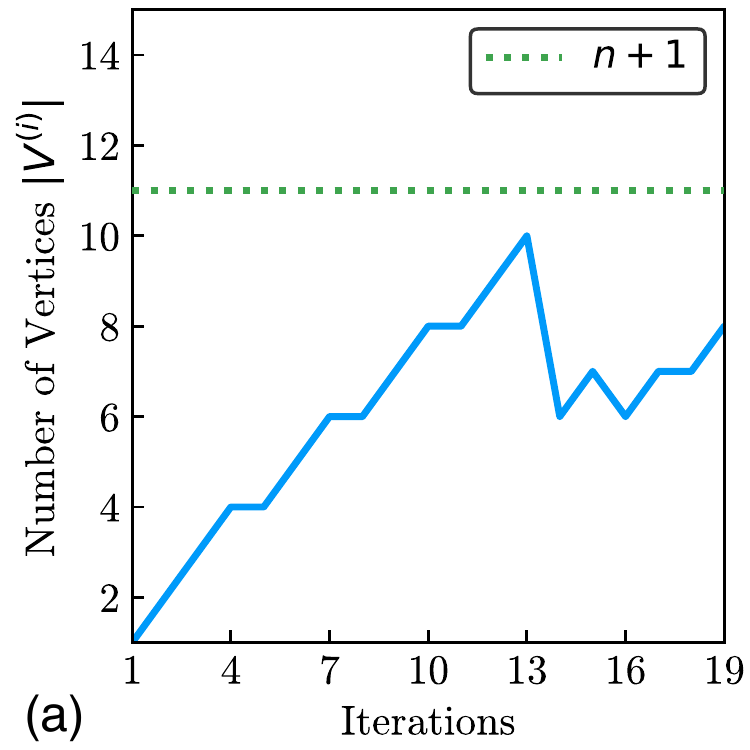}
 \end{subfigure}
 \begin{subfigure}{3.3cm}
\centering\includegraphics[width=3.3cm]{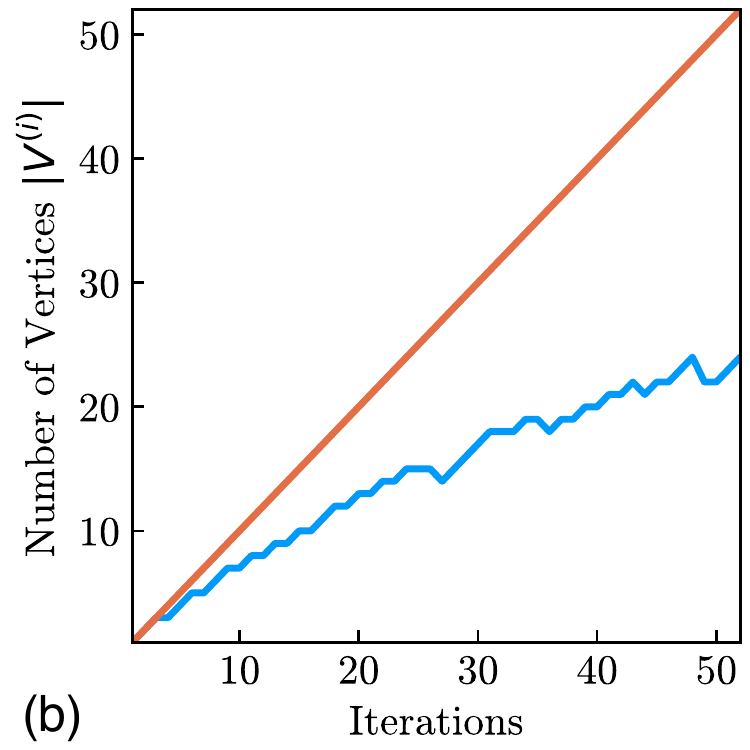}
 \end{subfigure}
 \begin{subfigure}{3.3cm}
 \centering\includegraphics[width=3.3cm]{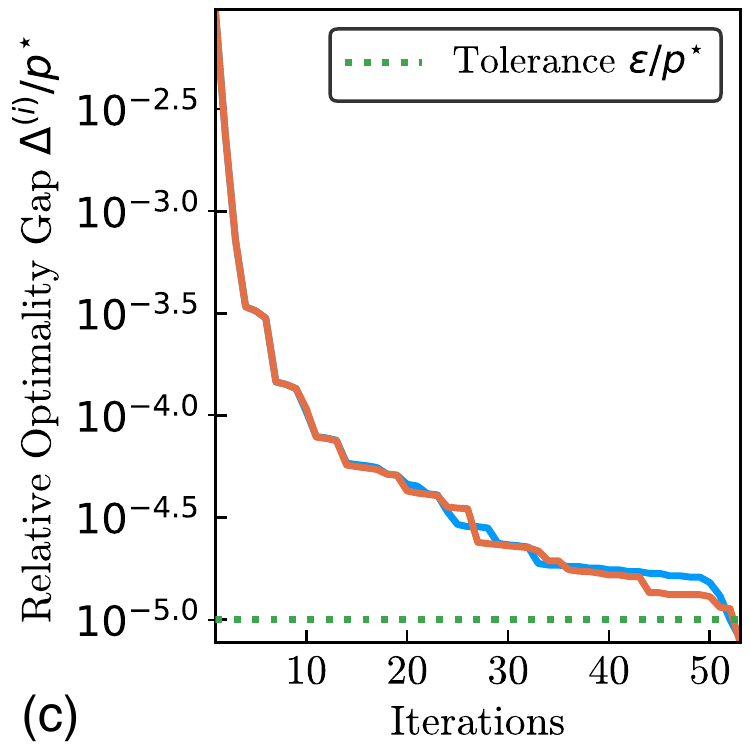}
 \end{subfigure}
 \begin{subfigure}{3.3cm}
 \centering\includegraphics[width=3.3cm]{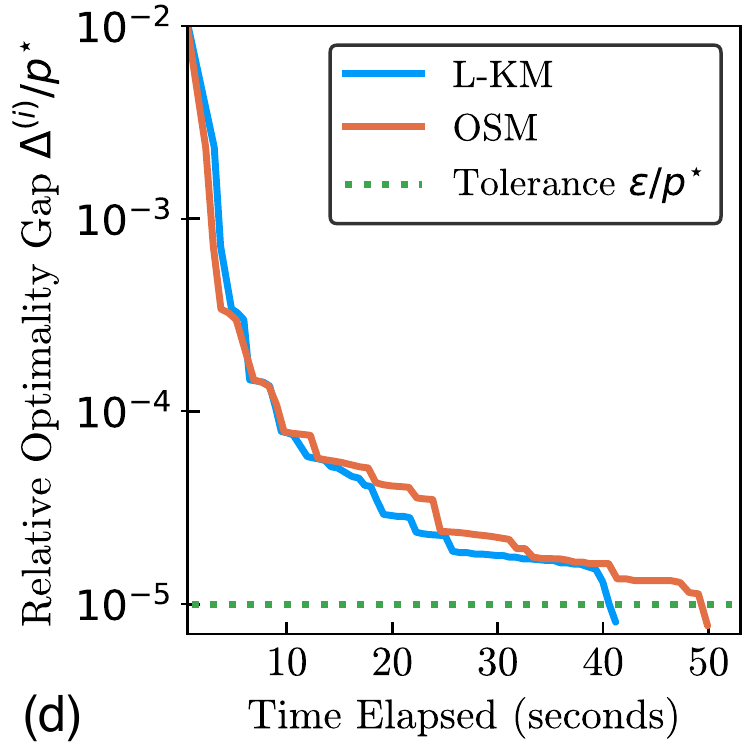}
 \end{subfigure}
 \caption{\footnotesize Dimension \(n = 10\) in (a), \(n = 100\) in (b), (c) and (d). The methods converged in (a), (b), (c) and (d).}
  \label{ori_and_ltd_fig}
\end{figure}
We remark that {\sc L-KM} converges so quickly that the bound on the size of the active set is less important, in practice,
than the fact that the active set need not grow at every iteration.

\textbf{Primal convergence}: We first solve a toy problem \textbf{Primal convergence}: We first solve a toy problem for \(n = 10\) and show that the number of constraints does not exceed \(n + 1\). Note that the number of constraints might oscillate before it reaches \(n + 1\) (\cref{ori_and_ltd_fig}(a)). We next compare the memory used in each iteration (\cref{ori_and_ltd_fig}(b)), the optimality gap per iteration (\cref{ori_and_ltd_fig}(c)), and the running time (\cref{ori_and_ltd_fig}(d)) of {\sc L-KM} and {\sc OSM} by solving the problem for \(n = 100\) up to accuracy of \(10^{-5}\) of the optimal value. Note that {\sc L-KM} uses much less memory compared to {\sc OSM}, converges at almost the same rate in iterations, and its running time per iteration improves as the iteration count increases.
\begin{figure}[!t]
\centering
 \begin{subfigure}{3.3cm}
 \centering\includegraphics[width=3.3cm]{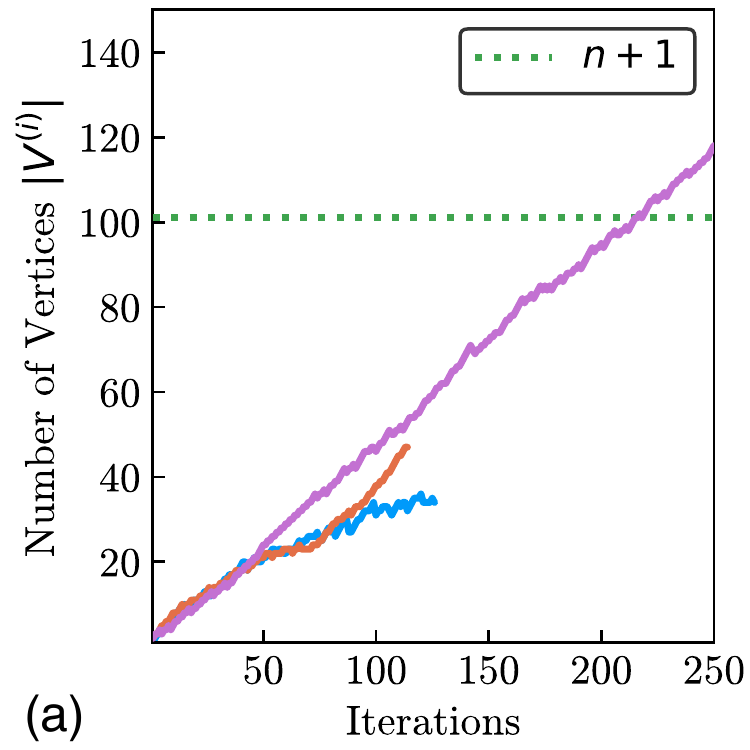}
 \end{subfigure}
 \begin{subfigure}{3.3cm}
 \centering\includegraphics[width=3.3cm]{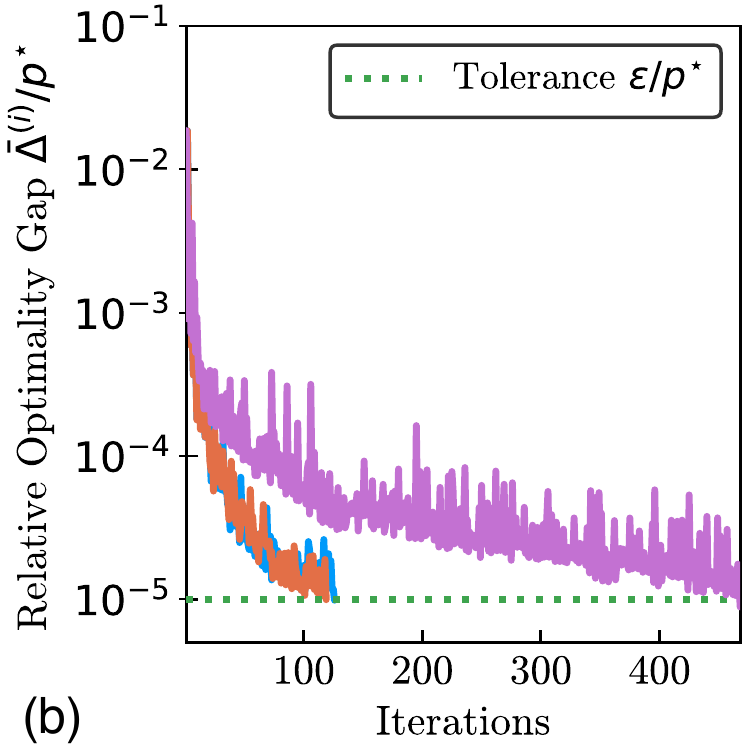}
 \end{subfigure}
 \begin{subfigure}{3.3cm}
 \centering\includegraphics[width=3.3cm]{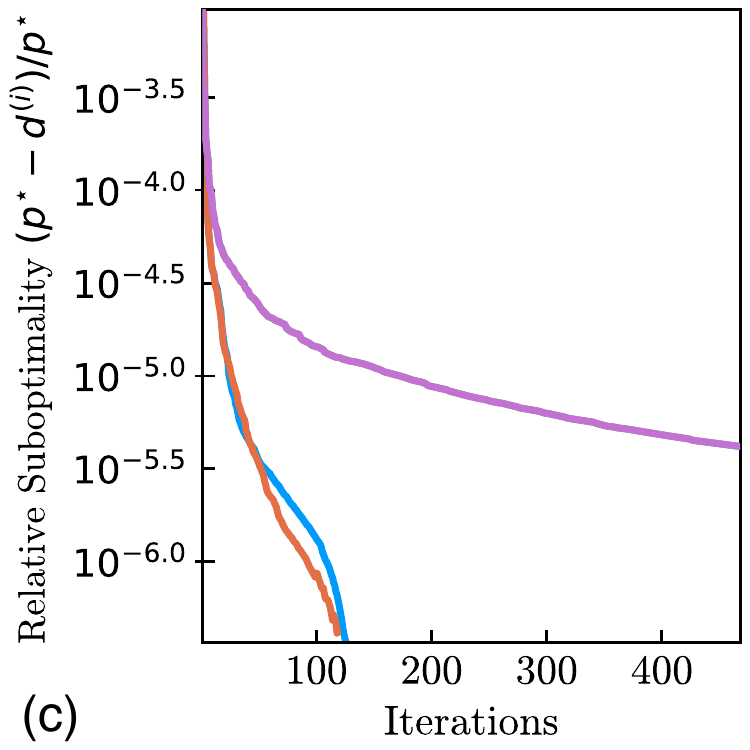}
 \end{subfigure}
 \begin{subfigure}{3.3cm}
 \centering\includegraphics[width=3.3cm]{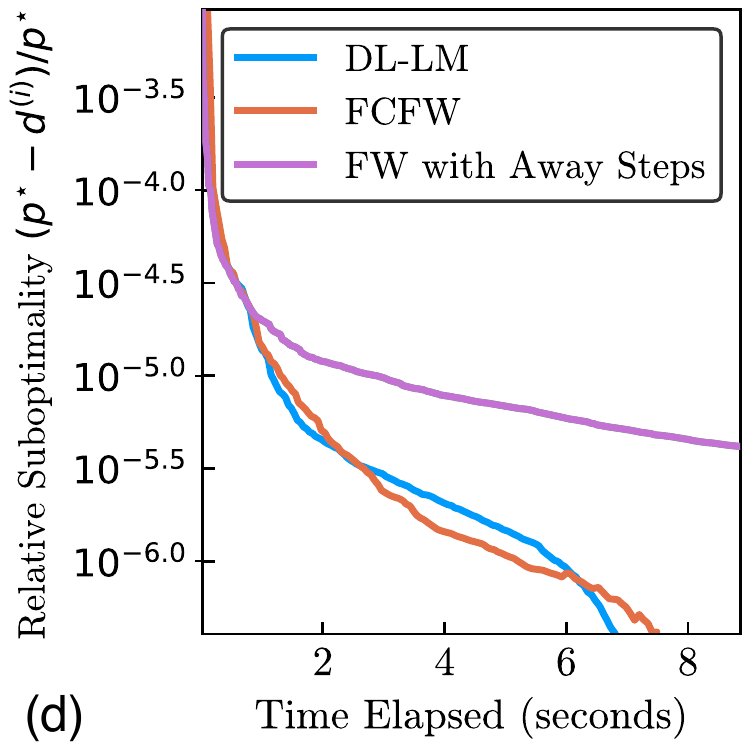}
 \end{subfigure}
 \caption{\footnotesize {\sc L-FCFW} and {\sc FCFW} converged in all plots, {\sc FW} with away steps has converged in (b), (c) and (d).}
 \label{dual_fw_fig}
\end{figure}

\textbf{Dual convergence}: We compare the convergence of {\sc L-FCFW}, {\sc FCFW} and Frank-Wolfe with away steps for the dual problem \(\max_{w \mathop{\in} B(F)}-(-w - b)^{\top}(A + n \mathbf{I}_n)^{-1}(-w-b)\) for \(n = 100\) up to relative accuracy of \(10^{-5}\). {\sc L-FCFW} maintains smaller sized subproblems (Figure \eqref{dual_fw_fig}(a)), and it converges faster than {\sc FCFW} as the number of iteration increases (Figure \eqref{dual_fw_fig}(d)). Their provable duality gap converges linearly in the number of iterations. Moreover, as shown in Figures \eqref{dual_fw_fig}(b) and (c), {\sc L-FCFW} and {\sc FCFW} return better approximate solutions than Frank-Wolfe with away steps under the same optimality gap tolerance.

\textbf{Conclusion} This paper defines a new limited memory version of Kelley's method adapted to composite convex and submodular objectives, and establishes the first convergence rate for such a method, solving the open problem proposed in \cite{Bach2015,Bach2013}.
We show bounds on the memory requirements and convergence rate, and demonstrate compelling performance in practice.

\subsubsection*{Acknowledgments}
This work was supported in part by DARPA Award FA8750-17-2-0101. A part of this work was done while the first author was at the Department of Mathematical Sciences, Tsinghua University and while the second author was visiting the Simons Institute, UC Berkeley. The authors would also like to thank Sebastian Pokutta for invaluable discussions on the Frank-Wolfe algorithm and its variants.







\footnotesize
\bibliographystyle{plain}
\bibliography{related_m}
\medskip
\small

\newpage
\appendix
\section{Additional Background} \label{app:background}
\subsection{Additional Examples}
We list some examples of popular submodular functions in Table \ref{tab:listofproblems}.

\begin{table}[ht]\footnotesize
\centering
\begin{tabular}{|p{6.0cm}|p{6.0cm}|}\hline
\textbf{Problem}	 & \textbf{Submodular function, $S \subseteq E$ (unless specified)}\\ \hline\hline
n experts (simplex), $E = \{1, \hdots, n\}$  & $f(S) = 1$\\ \hline
k out of n experts (k-simplex), $E = \{1, \hdots, n\}$ & $f(S) = \min\{|S|, k\}$ \\ \hline
Permutations over $E = \{1, \hdots, n\}$ & $f(S) = \sum_{s=1}^{|S|} (n+1-s) $  \\\hline 
k-truncated permutations over $E = \{1, \hdots, n\}$ & $f(S) = (n-k)|S|$ for $|S| \leq k$, $f(S) = k(n-k)  + \sum_{s=k+1}^{|S|} (n+1-s)$ if $|S| \geq k$  \\\hline
Spanning trees on $G=(V,E)$  & $f(S) = |V(S)| - \kappa(S)$, $\kappa(S)$ is the number of connected components of $S$ \\\hline
Matroids over ground set $E$: $M = (E, \mathcal{(I)}), \mathcal{(I)} \subseteq 2^{E}$ & $f(S) = r_M(S)$, the rank function of the matroid \\\hline
Coverage of T: given $T_1, \hdots, T_n \subseteq T$ &  $f(S) = |\bigcup_{i\in S} T_i|$, $E = \{1, \hdots, n\}$  \\\hline
Cut functions on a directed graph $D = (V, E)$, $c: E \rightarrow \mathbb{R}_+$ & $f(S) = c(\delta^{out}(S))$, $S \subseteq V$\\\hline
Flows into a sink vertex $t$, given a directed graph $D= (V, E)$ and costs $c: E \rightarrow \mathbb{R}_+$ & $f(S) = $ max flow from $S \subseteq V \setminus \{t\}$ into $t$ \\\hline
Maximal elements in $E$, $h: E \rightarrow \mathbb{R}$  & $f(S) = \max_{e \in S} h(e)$, $f(\emptyset) = \min_{e \in E} h(e)$  \\\hline 
Entropy $H$ of random variables $X_1, \hdots, X_n$ & $f(S) = H(\bigcup_{i \in S} X_i)$, $E = \{1, \hdots, n\}$\\\hline
\end{tabular}
\caption{Problems and the submodular functions (on ground set of elements $E$) that give rise to them.}
\label{tab:listofproblems} 
\end{table}

\subsection{Strong Convexity and Smoothness}\label{app:convexity}
We say a function \(g: \mathbb{R}^n \rightarrow \mathbb{R}\) is \(\alpha\)-strongly convex if \(g(x) - \alpha/2 \|x\|^2\) is convex, where \(\alpha > 0\). It is easy to see that the sum of a stronly convex function and a piecewise linear function is still strongly convex, and we have
\begin{lemma}
\label{lemma:uniqueness}
When \(g: \mathbb{R}^n \rightarrow \mathbb{R}\) is a strongly convex function, then
\begin{equation}
\tag{\textit{P(V)}} 
\bopt
\minimize & g(x) + \max_{w \mathop{\in} \mathop{\mathrm{conv}(\mathcal{V})}}w^{\top}x \\
\eopt
\end{equation}
has a unique optimal solution \(x^{\star}\) for all \(\mathcal{V} \subseteq \mathbb{R}^n\).
\end{lemma}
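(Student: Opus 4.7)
The plan is to show that the objective $x \mapsto g(x) + \phi(x)$, with $\phi(x) := \max_{w \in \conv(\mathcal V)} w^\top x$, is a strongly convex, closed, proper, coercive function on $\mathbb R^n$; standard results then give existence and uniqueness of the minimizer.

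First I would observe that $\phi$ is closed convex (but not necessarily finite-valued): it is the support function of $\conv(\mathcal V)$, i.e.\ the pointwise supremum of the affine maps $x \mapsto w^\top x$ over $w \in \conv(\mathcal V)$, hence convex and lower semicontinuous, and proper whenever $\mathcal V$ is nonempty. Consequently $g + \phi$ is the sum of an $\alpha$-strongly convex closed proper function and a closed proper convex function, so it inherits $\alpha$-strong convexity on its effective domain and remains closed and proper.

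Next I would argue existence via coercivity. Fix any $w_0 \in \conv(\mathcal V)$; then $\phi(x) \geq w_0^\top x$, so $\phi$ is bounded below by a linear function. Because $g$ is $\alpha$-strongly convex, it satisfies $g(x) \geq g(x_0) + s^\top(x-x_0) + \tfrac{\alpha}{2}\|x-x_0\|^2$ for any $x_0 \in \dom g$ and $s \in \partial g(x_0)$. Combining these two inequalities gives $g(x)+\phi(x) \geq c_1 + c_2^\top x + \tfrac{\alpha}{2}\|x-x_0\|^2$, which tends to $+\infty$ as $\|x\|\to\infty$. Coercivity together with closedness of $g+\phi$ implies that the sublevel sets are compact, so the infimum is attained.

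Finally, uniqueness is immediate from strong convexity: if $x_1 \neq x_2$ were two minimizers with common optimal value $p^\star$, then by $\alpha$-strong convexity
\[
(g+\phi)\!\left(\tfrac{x_1+x_2}{2}\right) \;\leq\; \tfrac{1}{2}(g+\phi)(x_1) + \tfrac{1}{2}(g+\phi)(x_2) - \tfrac{\alpha}{8}\|x_1-x_2\|^2 \;<\; p^\star,
\]
a contradiction. Hence $x^\star$ is unique. The only delicate step is ensuring $\phi$ does not destroy properness or cause the combined function to have empty domain, but since $\phi(x) > -\infty$ for all $x$ whenever $\mathcal V \neq \emptyset$ and $\dom g$ is nonempty by assumption, $\dom(g+\phi) = \dom g \cap \dom \phi$ is nonempty, and the argument goes through.
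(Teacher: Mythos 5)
Your proof is correct and follows the same route the paper intends: the paper offers no explicit proof, only the remark that the sum of a strongly convex function and a piecewise linear (here, support) function is again strongly convex, from which uniqueness follows. Your write-up supplies the details the paper glosses over — in particular the coercivity/closedness argument needed to guarantee that the minimizer actually \emph{exists}, which strong convexity alone does not give — so nothing is missing.
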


On the other hand, we say a function \(g: \mathbb{R}^n \rightarrow \mathbb{R}\) is \(\beta\)-smooth if there exists \(\beta > 0\) such that \(g(x) - \beta/2\|x\|^2\) is concave. We have\cite{Ryu2016}:
\begin{lemma}
\label{lemma:convexity_smoothness}
When a function \(g\) is \(\alpha\)-strongly convex, its Fenchel conjugate \(g^*\) is \(\frac{1}{\alpha}\)- smooth.
\end{lemma}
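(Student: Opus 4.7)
The plan is to establish the equivalent characterization that $g^*$ is $(1/\alpha)$-smooth by showing its gradient is $(1/\alpha)$-Lipschitz. First I would use $\alpha$-strong convexity of $g$ to establish that, for every $w \in \mathbb{R}^n$, the supremum in $g^*(w) = \sup_x \{w^\top x - g(x)\}$ is attained at a unique maximizer $x(w)$, since $w^\top x - g(x)$ is itself strongly concave. By Fermat's rule this unique maximizer is characterized by $w \in \partial g(x(w))$, equivalently $x(w) \in \partial g^*(w)$ via \eqref{lemma:Fenchelpair}. Uniqueness of the maximizer translates into $\partial g^*(w)$ being a singleton at every $w$, hence $g^*$ is differentiable on $\mathbb{R}^n$ with $\nabla g^*(w) = x(w)$.

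Next I would prove the Lipschitz bound $\|\nabla g^*(w_1) - \nabla g^*(w_2)\| \leq \frac{1}{\alpha}\|w_1 - w_2\|$ for arbitrary $w_1, w_2$. Writing $x_i = \nabla g^*(w_i)$ so that $w_i \in \partial g(x_i)$, I would invoke the standard strong convexity inequality at both points:
\begin{align*}
g(x_2) &\geq g(x_1) + w_1^\top (x_2 - x_1) + \tfrac{\alpha}{2}\|x_2-x_1\|^2, \\
g(x_1) &\geq g(x_2) + w_2^\top (x_1 - x_2) + \tfrac{\alpha}{2}\|x_1-x_2\|^2.
\end{align*}
Adding these two inequalities cancels the $g(x_i)$ terms and yields the cocoercivity-type bound $(w_2 - w_1)^\top (x_2 - x_1) \geq \alpha \|x_2 - x_1\|^2$. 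A single application of Cauchy--Schwarz then gives $\|w_1 - w_2\|\cdot \|x_1 - x_2\| \geq \alpha \|x_1 - x_2\|^2$, so $\|\nabla g^*(w_1) - \nabla g^*(w_2)\| \leq \frac{1}{\alpha}\|w_1 - w_2\|$.

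Finally I would translate the $(1/\alpha)$-Lipschitz gradient condition into the $(1/\alpha)$-smoothness definition used in the paper, namely that $g^*(w) - \frac{1}{2\alpha}\|w\|^2$ is concave; this is the classical equivalence between Lipschitz gradients and the quadratic-upper-bound / DC formulation and follows immediately from the descent lemma applied to $g^*$.

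The main obstacle I anticipate is the first step, cleanly arguing differentiability of $g^*$ from strong convexity of $g$ without assuming $g$ is smooth or finite-valued everywhere: one has to make sure the maximizer exists (coercivity of $w^\top x - g(x)$ from strong convexity), is unique (strict concavity), and that uniqueness of subgradients of a convex function implies differentiability. Once differentiability is in hand, the Lipschitz step is a short and classical calculation, and the conversion to smoothness is routine.
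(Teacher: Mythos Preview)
Your argument is correct and is the standard route to this result: establish that $g^*$ is everywhere differentiable via existence and uniqueness of the maximizer in the conjugate, then derive the $(1/\alpha)$-Lipschitz bound on $\nabla g^*$ from strong monotonicity of $\partial g$, and finally convert Lipschitz gradient into the concavity of $g^*(w)-\tfrac{1}{2\alpha}\|w\|^2$ via the descent lemma. The computations are clean and the only delicate point you flag---differentiability from uniqueness of the subgradient---is a genuine issue but is resolved by the classical fact that a convex function with singleton subdifferential at every point of an open set is differentiable there.

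The paper, however, does not prove this lemma at all: it is simply stated with a citation to \cite{Ryu2016} as a known fact from convex analysis. So there is no ``paper's proof'' to compare against; your proposal supplies a self-contained argument where the paper only provides a reference. What your write-up buys is independence from the cited source and an explicit verification that no smoothness or finiteness assumptions on $g$ beyond $\alpha$-strong convexity are needed; what the paper's approach buys is brevity, since this duality between strong convexity and smoothness is textbook material.
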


\begin{lemma}
\label{lemma:smoothness_convexity}
When a function \(g\) is \(\beta\)-smooth, its Fenchel conjugate \(g^*\) is \(\frac{1}{\beta}\)-strongly convex.
\end{lemma}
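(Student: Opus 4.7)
The plan is to prove this standard conjugacy result by exploiting the quadratic upper bound that defines $\beta$-smoothness and transferring it through the supremum defining $g^*$.

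First I would fix arbitrary $u, v \in \mathrm{dom}(g^*)$ and choose $s \in \partial g^*(u)$. By the Fenchel--Young identity \eqref{lemma:Fenchelpair}, this is equivalent to $u \in \partial g(s)$, and since $g$ is $\beta$-smooth it is in particular differentiable, so $u = \nabla g(s)$ and $g^*(u) = u^\top s - g(s)$.

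Next I would invoke the defining quadratic upper bound for $\beta$-smoothness,
\[
g(y) \;\leq\; g(s) + \nabla g(s)^\top (y-s) + \tfrac{\beta}{2}\|y-s\|^2 \qquad \forall\, y \in \mathbb{R}^n,
\]
rearrange to lower-bound $v^\top y - g(y)$, and take the supremum over $y$. Substituting $u = \nabla g(s)$ yields
\[
g^*(v) \;\geq\; \sup_{y} \Bigl[ (v-u)^\top y + u^\top s - g(s) - \tfrac{\beta}{2}\|y-s\|^2 \Bigr].
\]
The inner maximization is an unconstrained strongly concave quadratic in $y$, attained at $y^\star = s + (v-u)/\beta$, giving optimal value $v^\top s - g(s) + \tfrac{1}{2\beta}\|v-u\|^2$.

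Finally, using $g^*(u) = u^\top s - g(s)$ to rewrite $v^\top s - g(s) = g^*(u) + (v-u)^\top s$, I would conclude
\[
g^*(v) \;\geq\; g^*(u) + s^\top (v-u) + \tfrac{1}{2\beta}\|v-u\|^2 \qquad \forall\, s \in \partial g^*(u),
\]
which is precisely the subgradient characterization of $(1/\beta)$-strong convexity of $g^*$. The only mildly delicate point is ensuring $\partial g^*(u) \neq \emptyset$ on the relative interior of $\mathrm{dom}(g^*)$ and that the sup is attained at an interior $y^\star$; since $g$ is assumed closed and convex this follows from standard convex-analysis facts, so no serious obstacle is expected. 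The argument is symmetric in spirit to \cref{lemma:convexity_smoothness} and would be presented compactly in the appendix.
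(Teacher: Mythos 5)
Your argument is correct, and in fact it supplies more than the paper does: the paper states \cref{lemma:smoothness_convexity} without proof, citing Section~4.2 of the Ryu--Boyd primer, so there is no in-paper proof to compare against. Your derivation is the standard one and the computation checks out: picking $s \in \partial g^*(u)$ gives $u = \nabla g(s)$ and $g^*(u) = u^\top s - g(s)$ by Fenchel--Young; the descent-lemma upper bound on $g$ (which is equivalent to the paper's definition that $g - \tfrac{\beta}{2}\|\cdot\|^2$ be concave, and which also forces differentiability of the convex function $g$) turns the supremum defining $g^*(v)$ into a strongly concave quadratic in $y$ whose exact maximum yields
\[
g^*(v) \;\geq\; g^*(u) + s^\top(v-u) + \tfrac{1}{2\beta}\|v-u\|^2 ,
\]
and this subgradient inequality is equivalent to convexity of $g^* - \tfrac{1}{2\beta}\|\cdot\|^2$, i.e.\ $\tfrac1\beta$-strong convexity in the paper's sense. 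The one genuine caveat is the one you flag yourself: the inequality is established only at points $u$ with $\partial g^*(u) \neq \emptyset$, and $\beta$-smoothness of a finite convex $g$ alone does not force $\mathrm{dom}(g^*) = \mathbb{R}^n$ (e.g.\ $g(x)=\sqrt{1+x^2}$ has $\mathrm{dom}(g^*)=[-1,1]$), so strictly one obtains strong convexity of $g^*$ relative to its domain, extended by lower semicontinuity to the boundary. In the setting where the paper actually uses the lemma, $g$ is additionally strongly convex, so $\nabla g$ is surjective, $g^*$ is finite and differentiable everywhere, and the caveat is vacuous. Your proof would be a reasonable self-contained replacement for the external citation.
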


\section{The Original Simplicial Method (Section \ref{sec:lkm})}
\label{app:osm}
We present the Original Simplicial Method ({\sc OSM}) in Algorithm \ref{alg:osm}.

\begin{algorithm}[ht]\footnotesize
\caption{{\sc OSM}: The Original Simplicial Method for \eqref{eq:primal}
\label{alg:osm}}
\begin{algorithmic}[1]
\Require strongly convex function \(g: \mathbb{R}^n \rightarrow \mathbb{R}\), 
submodular function \(F: 2^n \rightarrow \mathbb{R}\), 
tolerance \(\epsilon > 0\)
\Ensure $\epsilon$-suboptimal solution \(x^{\sharp}\)
\State initialize:
choose \(x^{(0)} \in \mathbb{R}^{n}\), 
set \(\mathcal{V}^{(0)} = \emptyset\)
\For {i=1,2,\ldots}
 \State \textbf{Convex subproblem.} Define approximation \(f_{(i)}(x) = \max\{w^{\top}x : w \mathop{\in} \mathcal{V}^{(i-1)}\}\) and solve
 \[
 x^{(i)} = \argmin g(x) + f_{(i)}(x).
 \]
 \State \textbf{Submodular subproblem.} Compute value and subgradient of $f$ at $x^{(i)}$ 
 \[
 f(x^{(i)}) = \max_{w \in B(F)} w^{\top} x^{(i)}, \qquad v^{(i)} \mathop{\in} \partial f(x^{(i)}) = \argmax_{w \in B(F)} w^{\top} x^{(i)}.
 \]
\State \textbf{Stopping condition.} Break if duality gap \(p^{(i)} - d^{(i)} \leq \epsilon\), where
    \[
    p^{(i)} = g(x^{(i)}) + f(x^{(i)}), \qquad d^{(i)} = g(x^{(i)}) + f_{(i)}(x^{(i)}).
    \]
  \State \textbf{Update memory.} Update memory \(\mathcal{V}^{(i)}\):
 \[
 \mathcal{V}^{(i)} = \mathcal{V}^{(i-1)} \cup \{v^{(i)}\}.
 \]
\EndFor
\State \Return \(x^{(i)}\)
\end{algorithmic}
\end{algorithm}

\section{Limited Memory Kelley's Method (Section \ref{sec:lkm})}
\label{app:lkm}
In this section, we provide proofs of some of the results in Section \ref{sec:lkm}.

\textbf{Proof of \cref{thm:limited}.}
\begin{proof}
We prove this by induction. The claim is true for \(i = 0\) since \(\mathcal{V}^{(0)}\) has only one element. Suppose that the claim is true for \(i < i_0\). When \(\Delta > \epsilon\), we have \({v^{(i_0)}}^{\top}x^{(i_0)} = f(x^{(i_0)}) > f_{(i_0)}(x^{(i_0)})\). From \(\mathcal{A}^{(i_0)} \subseteq \{w \in \mathbb{R}^n\mid w^{\top}x^{(i_0)} = f_{(i_0)}(x^{(i_0)})\}\) we have \(v^{(i_0)} \notin \mathop{\mathrm{affine}}(\mathcal{A}^{(i_0)})\). Otherwise when \(\Delta^{(i)} \leq \epsilon\), the algorithm terminates in the \(i_{0}\)th iteration.

Since vectors in \(\mathcal{V}^{(i)}\) are affinely independent, we have \(|\mathcal{V}^{(i)}| \leq n+1\) for all \(i\) since \(\mathcal{V}^{(i)} \subseteq \mathbb{R}^n\).
\end{proof}

Before proving \cref{lemma:equality}, we first present a lemma that is used in the proof of \cref{lemma:equality}:

\begin{lemma}
\label{lemma:eq_ball}
Given a submodular function \(F: 2^{V} \to \mathbb{R}\), let \(\mathcal{W} \subseteq \mathop{\mathrm{vert}}(B(F))\) be a subset of the vertices of its base polytope. For the piecewise linear function
\[
\tilde{f}(x) = \max_{w \mathop{\in} \mathop{\mathrm{conv}}(\mathcal{W})}w^{\top}x,
\]
let \(\mathcal{A}(x) \overset{\Delta}{=} \{w^{\star} \in \mathcal{W} \mid {w^{\star}}^{\top}x = \tilde{f}(x)\}\) be the points in \(\mathcal{W}\) that are active at \(x\). Then given any \(\bar{x} \in \mathbb{R}^n\), there exists \(\epsilon > 0\) such that 
\[
\tilde{f}(x) = \max_{w^{\star} \mathop{\in} \mathop{\mathrm{conv}}(\mathcal{A}(\bar{x}))}{w^{\star}}^{\top}x
\]
for all \(x \in \mathcal{B}(\bar{x},\ \epsilon)\).
\end{lemma}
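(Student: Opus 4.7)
The plan is a standard continuity/compactness argument that exploits the fact that $\mathcal{W}$ is finite, since $B(F)$ is a polytope and hence has only finitely many vertices. First I would replace the max over $\mathop{\mathrm{conv}}(\mathcal{W})$ by a max over the finite set $\mathcal{W}$: a linear functional attains its maximum on a polytope at a vertex, so
\[
\tilde{f}(x) \;=\; \max_{w \in \mathop{\mathrm{conv}}(\mathcal{W})} w^\top x \;=\; \max_{w \in \mathcal{W}} w^\top x,
\]
and likewise $\max_{w \in \mathop{\mathrm{conv}}(\mathcal{A}(\bar x))} w^\top x = \max_{w \in \mathcal{A}(\bar x)} w^\top x$. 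Since $\mathcal{A}(\bar x) \subseteq \mathcal{W}$, the inequality $\tilde{f}(x) \geq \max_{w \in \mathcal{A}(\bar x)} w^\top x$ is automatic, so only the reverse inequality needs work.

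For the reverse inequality, I would quantify the strict slack at $\bar x$ between active and inactive atoms. For every $w \in \mathcal{W}\setminus \mathcal{A}(\bar x)$ set $\delta_w := \tilde{f}(\bar x) - w^\top \bar x > 0$, and let $\delta := \min\{\delta_w : w \in \mathcal{W}\setminus \mathcal{A}(\bar x)\}$, which is strictly positive because it is the minimum over a finite set of positive numbers (if $\mathcal{W}\setminus \mathcal{A}(\bar x) = \emptyset$ the statement is trivial). Let $R := \max_{w \in \mathcal{W}} \|w\|$, also finite, and choose $\epsilon := \delta/(3R)$. Then for any $x \in \mathcal{B}(\bar x,\epsilon)$ and any $w \in \mathcal{W}$ we have $|w^\top(x-\bar x)| \leq R \cdot \epsilon \leq \delta/3$ by Cauchy--Schwarz.

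Combining these, for any $w \in \mathcal{W}\setminus \mathcal{A}(\bar x)$ and any $w' \in \mathcal{A}(\bar x)$,
\[
w^\top x \;\leq\; w^\top \bar x + \tfrac{\delta}{3} \;\leq\; \tilde{f}(\bar x) - \delta + \tfrac{\delta}{3},
\qquad
{w'}^\top x \;\geq\; {w'}^\top \bar x - \tfrac{\delta}{3} \;=\; \tilde{f}(\bar x) - \tfrac{\delta}{3},
\]
so $w^\top x < {w'}^\top x$. Consequently, the maximum of $w^\top x$ over $\mathcal{W}$ is attained in $\mathcal{A}(\bar x)$, which yields $\tilde{f}(x) = \max_{w \in \mathcal{A}(\bar x)} w^\top x = \max_{w \in \mathop{\mathrm{conv}}(\mathcal{A}(\bar x))} w^\top x$, as desired.

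I do not anticipate a real obstacle: the only thing to keep straight is that $\mathcal{W}$ (and hence $\mathcal{A}(\bar x)$) is finite because it consists of vertices of a polytope, which is what lets us take the minimum defining $\delta$ and the maximum defining $R$, and turn the strict pointwise inequality at $\bar x$ into a uniform inequality on a ball.
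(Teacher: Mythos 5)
Your proof is correct and follows essentially the same argument as the paper's: both quantify the minimum positive slack $\delta$ of the inactive atoms at $\bar x$, bound the perturbation of each linear functional on a ball of radius $\delta/(3R)$ via the norm bound $R=\max_{w\in\mathcal W}\|w\|$, and conclude that the inactive atoms remain strictly dominated throughout the ball. Your treatment is in fact slightly more careful, handling the trivial case $\mathcal W\setminus\mathcal A(\bar x)=\emptyset$ and justifying the reduction from the convex hull to the finite vertex set explicitly.
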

\begin{proof}

Since \(\mathcal{W}\) is finite, we have \(\tilde{f}(\bar{x}) \geq \max_{\tilde{w} \in \mathcal{W} \setminus \mathcal{A}(\bar{x})}\tilde{w}^{\top}\bar{x} + \epsilon\), where \(\epsilon > 0\). Let \(L = \max_{w \in \mathcal{W}}\|w\|\), then for all \(x \in \mathcal{B}(\bar{x},\ \epsilon/(3L))\), \(w^{\star} \in \mathcal{A}(\bar{x})\) and \(\tilde{w} \in \mathcal{W} \setminus \mathcal{A}(\bar{x})\), we have
\begin{equation}
    \begin{split}
        {w^{\star}}^{\top}x - \tilde{w}^{\top}x &= ({w^{\star}}-\tilde{w})^{\top}\bar{x} + {w^{\star}}^{\top}(x - \bar{x}) + \tilde{w}^{\top}(\bar{x} - x) \\
        &\geq \epsilon - L\frac{\epsilon}{3L} - L\frac{\epsilon}{3L} \\
        &= \frac{\epsilon}{3}.
    \end{split}
\end{equation}
Hence \(\tilde{f}(x) > \tilde{w}^{\top}x\) for all \(x \in \mathcal{B}(\bar{x},\ \epsilon/(3L))\) and \(\tilde{w} \in \mathcal{W} \setminus \mathcal{A}(\bar{x})\), which is equivalent to \(\tilde{f}(x) = \max_{w^{\star} \mathop{\in} \mathop{\mathrm{conv}}(\mathcal{A}(\bar{x}))}{w^{\star}}^{\top}x\) for all \(x \in \mathcal{B}(\bar{x},\ \epsilon/(3L))\).
\end{proof}

\textbf{Proof of \cref{lemma:equality}.}
\begin{proof}
Let \(P_{(i)}(x)\overset{\Delta}{=} \min g(x) + \max_{w \mathop{\in} \mathop{\mathrm{conv}}(\mathcal{V}^{(i-1)})}w^{\top}x = g(x) + f_{(i)}(x)\) and \(\widetilde{P}_{(i)} \overset{\Delta}{=} \min_{x \mathop{\in} \mathbb{R}^n} g(x) + \max_{w \mathop{\in} \mathop{\mathrm{ conv}}(\mathcal{A}^{(i)})}w^{\top}x\). There exists at least one $w^{\star} \in \V^{(i-1)}$ such that $f_{(i)}(x^{(i)}) = w^{\star T}x^{(i)}$. Therefore, $P_{(i)}(x^{(i)}) = g(x^{(i)}) + w^{\star T}x^{(i)} = g(x^{(i)}) + \max_{w \in \mathop{\mathrm{conv}}(\mathcal{A}^{(i)})} w^{\top}x^{(i)} = \widetilde{P}_{(i)}(x^{(i)})$, where the last equality follows from the definition of $\A^{(i)}$. Next, if we can show local optimality of $x^{(i)}$ for $\widetilde{P}_{(i)}$, this would imply global optimality of $x^{(i)}$ for $\widetilde{P}_{(i)}$ due to convexity of $\widetilde{P}_{(i)}$, thus \({P}_{(i)}\) and $\widetilde{P}_{(i)}$ will have the same optimal value. By the definition of \(\mathcal{A}^{(i)}\) and \cref{lemma:eq_ball}, we have \(f(x) = \max_{w \in \mathop{\mathrm{conv}}(\mathcal{A}(x^{(i)}))} = f_{(i)}(x^{(i)})\) in \(\mathcal{B}(x^{(i)},\ \epsilon)\) for some \(\epsilon > 0\). Thus \(P_{(i)}(x^{(i)}) = g(x) + f(x) = g(x) + f_{(i)}(x) = \widetilde{P}_{(i)}(x)\) for \(x \in \mathcal{B}(x^{(i)},\ \epsilon)\). Hence \(x^{(i)}\) is an local optimal solution to \(\widetilde{P}_{(i)}\), and the lemma is proved. By \cref{lemma:uniqueness}, \(x^{(i)}\) is the unique solution to both \(P_{(i)}\) and \(\widetilde{P}_{(i)}\).

\end{proof}

\textbf{Proof of Corollary \ref{cor:lowerbounds}.}
\begin{proof}
For any \(i \geq 1\), by \cref{lemma:uniqueness}, there exists an \(x^{(i)} \in \mathbb{R}^n\) that minimizes \(g(x) + f_{(i)}(x)\). Thus we have
\begin{align}
    d^{(i)} &= g(x^{(i)}) + f_{(i)}(x^{(i)}) \nonumber\\
    & = g(x^{(i)}) + \max_{w \in \mathop{\mathrm{conv}}(\mathcal{V}^{(i-1)})}w^{\top}x^{(i)} \nonumber\\
    & \geq g(x^{(i)}) + \max_{w \in \mathop{\mathrm{conv}}(\mathcal{A}^{(i-1)})}w^{\top}x^{(i)}  &\triangleright\ \mathcal{A}^{(i-1)} \subseteq \mathcal{V}^{(i-1)} \\
    &>  g(x^{(i-1)}) + \max_{w \in \mathop{\mathrm{conv}}(\mathcal{A}^{(i-1)})}w^{\top}x^{(i-1)}  &\triangleright\ \text{optimality and uniqueness of }x^{(i-1)} \nonumber\\
    &= d^{(i-1)}. \nonumber
\end{align}
On the other hand, by \(\mathcal{V}^{(i-1)} \subseteq \mathop{\mathrm{vert}}(B(F))\), we have
\begin{equation}
    \begin{split}
        d^{(i)} &= \min_{x \mathop{\in} \mathbb{R}^n} \{g(x) + \max_{w \in \mathop{\mathrm{conv}}(\mathcal{V}^{(i-1)})}w^{\top}x\} \\
        &\leq \min_{x \mathop{\in} \mathbb{R}^n}\{ g(x) + \max_{w \in B(F)}w^{\top}x\} \\
        &= \min_{x \mathop{\in} \mathbb{R}^n} g(x) + f(x)
    \end{split}
\end{equation}
for all \(i \geq 0\).
\end{proof}

\textbf{Proof of Corollary \ref{cor:inequality}.}
\begin{proof}
Note that each \(\mathcal{V}^{(i)}\) determines a unique \(d^{(i)}\). Suppose for contradiction that there exists \(i_1 \neq i_2\) but \(\mathcal{V}^{(i_1)} = \mathcal{V}^{(i_2)}\), then we will have \(d^{(i_1)} = d^{(i_2)}\), which contradicts the fact that \{\(d^{(i)}\)\} strictly increases.
\end{proof}

\textbf{Proof of \cref{thm:termination}}

\begin{proof}
Since \(\mathop{\mathrm{vert}}(B(F))\) has finitely many vertices, there are only finitely many choices of \(\mathcal{V}^{(i)} \subseteq \mathop{\mathrm{vert}}(B(F))\). Thus by \cref{cor:inequality}, \cref{alg:L-KM} terminates within finitely many steps.

Suppose for contradiction that when the algorithm terminates at \(i = i_0\), \(p^{(i_0)} - d^{(i_0)} > \epsilon \geq 0\). Let \(\mathcal{A}^{(i_0)} \overset{\Delta}{=} \{w \in \mathcal{V}^{(i_0-1)}: w^{\top}{x^{(i_0)}} \overset{\Delta}{=} f_{(i_0)}(x^{(i_0)})\) and \(v^{(i_0)} \in \mathcal{V}(x^{(i_0)})\). Define \(\mathcal{V}^{(i_0)} \overset{\Delta}{=} \mathcal{A}^{(i_0)} \cup \{v^{(i_0)}\}\) and \(f_{(i_0+1)}(x) = \max\{w^{\top}x : w \mathop{\in} \mathcal{V}^{(i_0)}\}\), then let \(x^{(i_0+1)} = \argmin_{x \mathop{\in} \mathbb{R}^n} g(x) + f_{(i_0+1)}(x)\). By the proof of \cref{cor:lowerbounds}, we have \(d^{(i_0+1)} = g(x^{(i_0+1)}) + f_{(i_0+1)}(x^{(i_0+1)}) > d^{i_0}\), so \(\mathcal{V}^{(i_0)}\) is different to any \(V^{(i)}\) where \(i \leq i_0\), and {\sc L-KM} should not have terminated at \(i = i_0\). Thus {\sc L-KM} would never terminate when \(p^{(i)} - d^{(i)} > \epsilon \geq 0\).

\end{proof}

\section{Duality (Section \ref{s:duality})} \label{app:duality}

To prove the strong duality between \eqref{eq:primal-subproblem} and \eqref{eq:dual-subproblem}, we first verify the weak duality:

\begin{theorem}[Weak Duality]
The optimal value of primal problem \eqref{eq:primal-subproblem} is greater than or equal to the optimal value of the dual problem \eqref{eq:dual-subproblem}. \label{thm:weak-duality-subproblems}
\end{theorem}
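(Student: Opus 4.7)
The plan is to combine the Fenchel--Young inequality for $g$ with the fact that any $w \in \conv(\mathcal V)$ can be written as a convex combination of the vectors $v \in \mathcal V$ that bound $t$ from below in the primal feasibility constraint. First I would fix an arbitrary primal feasible pair $(x,t)$ (so $t \geq v^\top x$ for every $v \in \mathcal V$) and an arbitrary dual feasible $w \in \conv(\mathcal V)$, written as $w = \sum_{v \in \mathcal V} \lambda_v v$ with $\lambda_v \geq 0$ and $\sum_v \lambda_v = 1$.

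Next, I would apply Fenchel--Young to the pair $(x,-w)$ using the definition of the conjugate in \eqref{fenchel}:
\[
g^*(-w) \;\geq\; (-w)^\top x - g(x) \quad\Longleftrightarrow\quad g(x) \;\geq\; -w^\top x - g^*(-w).
\]
Adding $t$ to both sides and using the convex-combination bound
\[
w^\top x \;=\; \sum_{v \in \mathcal V} \lambda_v\, v^\top x \;\leq\; \sum_{v \in \mathcal V} \lambda_v\, t \;=\; t,
\]
which follows directly from the primal constraints $t \geq v^\top x$ and $\lambda_v \geq 0$, I obtain
\[
g(x) + t \;\geq\; (t - w^\top x) - g^*(-w) \;\geq\; -g^*(-w).
\]
Taking the infimum over primal feasible $(x,t)$ on the left and the supremum over $w \in \conv(\mathcal V)$ on the right yields the claim.

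There is really no hard step here: the argument is a textbook Fenchel--Young plus convex-combination estimate, and the only mild subtlety is making sure $g^*(-w)$ is well-defined, which follows from the assumption that $g$ is strongly convex (so $g^*$ has full domain, as noted after \eqref{fenchel}). This lemma will then serve as the easy half of \cref{thm:strong-duality-subproblems}, whose nontrivial content is attainment and equality of optima.
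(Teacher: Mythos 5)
Your proof is correct and is essentially the paper's own argument unrolled pointwise: the paper phrases weak duality as the max--min inequality for the Lagrangian $g(x)+w^\top x$ and then identifies $\min_x g(x)+w^\top x=-g^*(-w)$, which is exactly your Fenchel--Young step, while your convex-combination bound $w^\top x\le t$ plays the role of the paper's observation that the inner maximum over $\conv(\mathcal V)$ dominates any fixed $\tilde w$. No gap; nothing further needed.
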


\begin{proof}
We first have
\begin{equation}
\min_{x \mathop{\in} \mathbb{R}^n} \{g(x) + \max_{w \mathop{\in} \mathop{\mathrm{conv}}(\mathcal{V})}w^{\top}x\} = \min_{x \mathop{\in} \mathbb{R}^n} \max_{w \mathop{\in} \mathop{\mathrm{conv}}(\mathcal{V})} g(x) + w^{\top}x.
\label{ftow}
\end{equation}
For any given \(\tilde{w} \in \mathop{\mathrm{conv}}(\mathcal{V})\), we also have
\begin{equation}
\min_{x \mathop{\in} \mathbb{R}^n} \max_{w \mathop{\in} \mathop{\mathrm{conv}}(\mathcal{V})} g(x) + {\tilde{w}}^{\top}x \geq \min_{x \mathop{\in} \mathbb{R}^n} g(x) + {\tilde{w}}^{\top}x.
\end{equation}
Thus by the definition of \(g^{*}\), we can see that
\begin{equation}
\begin{split}
\min_{x \mathop{\in} \mathbb{R}^n} \max_{w \mathop{\in} \mathop{\mathrm{conv}}(\mathcal{V})} g(x) + w^{\top}x 
\mathop{\geq}& \max_{w \mathop{\in} \mathop{\mathrm{conv}}(\mathcal{V})} \min_{x \mathop{\in} \mathbb{R}^n} g(x) + w^{\top}x \\
\mathop{=}& \max_{w \mathop{\in} \mathop{\mathrm{conv}}(\mathcal{V})} - \max_{x \mathop{\in} \mathbb{R}^n} (-w)^{\top}x - g(x) \\
\mathop{=}& \max_{w \mathop{\in} \mathop{\mathrm{conv}}(\mathcal{V})} - g^{*}(-w).
\end{split}
\label{wtog}
\end{equation}
Combine {\eqref{ftow}} and {\eqref{wtog}}, and the theorem follows.
\end{proof}

\textbf{Proof of \cref{thm:strong-duality-subproblems}}

\begin{proof}
By \cref{lemma:uniqueness}, we know \eqref{eq:primal-subproblem} has a unique solution \(\bar{x}\). Since \(g\) is convex, we have \(\partial g(x) \neq \emptyset\). By the optimality of \(\bar{x}\), we also have \(0 \in \partial g(\bar{x}) + \partial f(\bar{x})\). Let \(\bar{w} \in -\partial g(\bar{x}) \cap \partial f(\bar{x})\), then
\begin{equation}
g^{*}(-\bar{w}) = (-\bar{w})^{\top}\bar{x} - g(\bar{x})
\end{equation}
by \cref{lemma:Fenchelpair}. Note that \(\bar{w} \in \partial f(\bar{x})\), we also have \(f(\bar{x}) = {\bar{w}}^{\top}\bar{x}\) by Equation \eqref{lemma:lovasz_attainment}. Thus
\begin{equation}
    f(\bar{x}) + g(\bar{x}) = g^*({\bar{w}}),
\end{equation}
\(\bar{w}\) is an optimal solution to \eqref{eq:dual-subproblem} and we have \eqref{eq:primal-subproblem} and \eqref{eq:dual-subproblem} via weak duality.
\end{proof}

\section{Primal-from-dual algorithm (Section \ref{s:solving-dual})} \label{s:primal-from-dual}
Now consider the \emph{Primal-from-dual} algorithm presented in \cref{s:solving-dual}.

Formally, assume $g$ is $\alpha$-strongly convex.
Suppose we obtain $w \in B(F)$ with
\[
\|w - w^{\star}\| \leq \epsilon
\]
via some dual algorithm (e.g., {\sc L-FCFW}). 
Define $x = \nabla_w (-g^*(-w)) = \argmin_x g(x) + w^\top x$.
Since $g^*$ is $1/\alpha$ smooth, we have
\[
\|x - x^\star\| \leq 1/\alpha \|w - w^\star\| \leq \epsilon/\alpha
\]
Hence if the dual iterates converge linearly, so do the primal iterates.

The remaining difficulty is how to solve the {\sc L-FCFW} subproblems. 
One possibility is to use the values and gradients of
(a {\sc first order oracle} for) $h = g^*$.
To implement a first order oracle for $h = g^*$, we need only 
solve an unconstrained minimization problemma:
\[
g^*(y) = \max_{x \mathop{\in}\mathbb{R}^n} y^\top x - g(x),
\qquad
\nabla g^*(y) = \argmax_{x \mathop{\in}\mathbb{R}^n} y^\top x - g(x).
\]
This problem is straightforward to solve 
since $g$ is smooth and strongly convex.
However, it is not clear how solving these subproblems approximately 
affects the convergence of {\sc L-FCFW}. 
Morever, we will see in the next section that {\sc L-KM} achieves 
exactly the same sequence of iterates as the above (rather unwieldly) proposal.

\section{Duality between L-KM and L-FCFW (Section \ref{sec:convergence})}

\begin{lemma}
Only vertices in \(\mathcal{A}^{(i)}\) can have positive convex multipliers in the convex decomposition of \(w^{(i)}\), i.e., if we write \(w^{(i)} = \sum_{v \in \mathcal{V}^{(i-1)}}\lambda_{v}^{(i)}v\) such that \(0 \leq \lambda_{v} \leq 1\) for any \(v \in \mathcal{V}^{(i-1)}\), then \(\lambda_{v}^{(i)} = 0\) for any \(v \in \mathcal{V}^{(i-1)} \setminus \mathcal{A}^{(i)}\).
\label{w_in_A}
\end{lemma}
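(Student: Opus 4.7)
The plan is to combine the first-order optimality of $w^{(i)}$ as a maximizer of the concave function $h$ over $\conv(\mathcal V^{(i-1)})$ with the elementary fact that a convex combination of numbers equals the maximum of those numbers only if all weight concentrates on elements that attain the maximum.

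First I would use the fact that $w^{(i)} = \argmax\{h(w): w \in \conv(\mathcal V^{(i-1)})\}$ together with $x^{(i)} = \nabla h(w^{(i)})$. Since $h$ is concave and $\conv(\mathcal V^{(i-1)})$ is convex, the first-order optimality condition yields $(w - w^{(i)})^\top x^{(i)} \leq 0$ for every $w \in \conv(\mathcal V^{(i-1)})$; specialising $w = v$ for each $v \in \mathcal V^{(i-1)}$ gives
\[
v^\top x^{(i)} \;\leq\; (w^{(i)})^\top x^{(i)} \qquad \forall\, v \in \mathcal V^{(i-1)}.
\]
By definition of $\mathcal A^{(i)}$, equality holds precisely for $v \in \mathcal A^{(i)}$, while $v^\top x^{(i)} < (w^{(i)})^\top x^{(i)}$ strictly for $v \in \mathcal V^{(i-1)} \setminus \mathcal A^{(i)}$.

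Next I would plug the convex decomposition $w^{(i)} = \sum_{v \in \mathcal V^{(i-1)}} \lambda_v^{(i)} v$ into the identity $(w^{(i)})^\top x^{(i)} = \sum_v \lambda_v^{(i)} (v^\top x^{(i)})$. Writing $t = (w^{(i)})^\top x^{(i)}$, this becomes
\[
0 \;=\; \sum_{v \in \mathcal V^{(i-1)}} \lambda_v^{(i)} \bigl( t - v^\top x^{(i)} \bigr),
\]
a sum of nonnegative terms (by the previous display and $\lambda_v^{(i)} \geq 0$). Hence each summand vanishes: for every $v$ with $\lambda_v^{(i)} > 0$ we must have $v^\top x^{(i)} = t$, i.e. $v \in \mathcal A^{(i)}$. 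Equivalently, $\lambda_v^{(i)} = 0$ for all $v \in \mathcal V^{(i-1)} \setminus \mathcal A^{(i)}$, which is the claim.

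There is no real obstacle; the only subtlety is making sure the first-order optimality condition is applied correctly to a concave maximization problem (so that the inequality points the right way) and that the convex-combination argument is presented cleanly. Note this is essentially the complementary slackness statement of \cref{lemma:comp-slackness} translated into the dual picture, which is consistent with the duality developed in \cref{s:duality}.
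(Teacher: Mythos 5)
Your proof is correct and follows essentially the same route as the paper: both arguments reduce to observing that $w^{(i)}$ maximizes $w^\top x^{(i)}$ over $\conv(\mathcal V^{(i-1)})$ (so vertices outside $\mathcal A^{(i)}$ satisfy a strict inequality) and then noting that $0 = \sum_{v} \lambda_v^{(i)}\bigl((w^{(i)})^\top x^{(i)} - v^\top x^{(i)}\bigr)$ is a sum of nonnegative terms, forcing each to vanish. Your version is slightly more careful in that it makes the first-order optimality condition explicit, which the paper's proof uses implicitly when it identifies $\conv(\mathcal A^{(i)})$ with $\argmax_{w \in \conv(\mathcal V^{(i-1)})} w^\top x^{(i)}$.
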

\begin{proof}
By the definition of \(\mathcal{A}^{(i)}\), we have
\begin{align}
\mathop{\mathop{\mathrm{conv}}}(\mathcal{A}^{(i)}) &= \mathop{\mathop{\mathrm{conv}}}(\{v \in \mathcal{V}^{(i-1)}\mid v^{\top}x^{(i)} = {w^{(i)}}^{\top}x^{(i)}\}) \nonumber \\
&= \mathop{\mathop{\mathrm{conv}}}(\{v \in \mathcal{V}^{(i-1)}\mid v^{\top}x^{(i)} = \max_{w \mathop{\in} \mathop{\mathop{\mathrm{conv}}}(\mathcal{V}^{(i-1)})}w^{\top}x^{(i)}\}) \label{maximizer2}  \\
&= {\arg\max}_{w \mathop{\in} \mathop{\mathop{\mathrm{conv}}}(\mathcal{V}^{(i-1)})}w^{\top}x^{(i)}.  \nonumber
\end{align}
Then
\begin{align}
0 &= (w^{(i)} - w^{(i)})^{\top}x^{(i)}    \nonumber  \\
&= (w^{(i)} - \sum_{v \mathop{\in} \mathcal{V}^{(i-1)}}\lambda_{v}^{(i)}v) \\
&= \sum_{v \mathop{\in} \mathcal{V}^{(i-1)} \setminus \mathcal{A}^{i}} \lambda_{v}^{(i)}[(w^{(i)})^{\top}x^{(i)} - v^{\top}x^{(i)}].  &\triangleright\ v^{\top}x^{(i)} = {w^{(i)}}^{\top}x^{(i)},\ \mathop{\forall} v \in \mathcal{A}^{(i)}  \nonumber
\end{align}
By the definition of \(\mathcal{A}^{(i)}\), we have \(v^{\top}x^{(i)} - w^{(i)}x^{(i)} < 0\) for any \(v \in \mathcal{V}^{(i-1)} \setminus \mathcal{A}^{(i)}\). Thus \(\lambda_{v}^{(i)} = 0\) for any \(v \in \mathcal{V}^{(i-1)} \setminus \mathcal{A}^{(i)}\).
\end{proof}

\textbf{Proof of \cref{thm:parallelism}.}
\begin{proof}
We prove by induction. When \(i = 1\), \(\mathcal{V}^{(0)}\) will naturally refer to the same set of points in {\sc L-KM} and {\sc L-FCFW}. By \cref{lemma:uniqueness}, we have \(x^{(1)}\) is the unique solution to \(g + f_{(1)}\). Note \(g^*\) is strongly convex given \(g\) is smooth (\cref{lemma:smoothness_convexity}), we have \(w^{(1)}\) is the unique solution to \(\max_{w \mathop{\in} \mathop{\mathrm{conv}}(\mathcal{V}^{(0)})}-g^*(-w)\). Let \(\mathcal{V} = \mathcal{V}^{(0)}\) in \cref{thm:primal-from-dual}, we have that \(x^{(1)}  = -\nabla g^*(-w^{(1)})\) is the unique minimizer of \(g + f_{(1)}\). So \(x^{(1)}\) in the two algorithms match. Also note that \(w^{(1)}\) solves \(\max_{w \mathop{\in} \mathop{\mathrm{conv}}(\mathcal{V}^{(0)})}-g^*(-w)\), we have \(w^{(1)}\) maximizes \(w^{\top}x^{(1)}\) for all \(w \in \mathop{\mathrm{conv}}(\mathcal{V}^{(i-1)})\) by the first order optimality condition, which gives \({w^{(i)}}^{\top}x^{(i)} = f_{(i)}(x^{(i)})\). Thus \(\mathcal{A}^{(1)}\), \(\mathcal{V}^{(1)}\) match consequently. By strong duality in \cref{thm:strong-duality-subproblems}, we have \(d^{(1)}\) matches in the two algorithms. Note \(g^*\) is strongly convex, which gives the uniqueness of \(w^{(1)}\). By \cref{thm:primal-from-dual}, \(\nabla g(w^{(1)})\) solves the primal subproblem, so \(x^{(1)}\) match in the two algorithms by the uniqueness of \(x^{(1)}\).

Suppose that the theorem holds for \(i = i_0\), in particular, the \(\mathcal{V}^{(i_0)}\) match in the two algorithms. Then for \(i = i_0 + 1\), we can use the same argument as in the previous paragraph by substituting 0 with \(i_0\) and 1 with \(i_0+1\), and show that all the statements hold for \(i = i_0 + 1\). Note that by \cref{w_in_A}, \(\mathcal{A}^{(i)}\) satisfies the condition in \cref{condition_for_B} of {\sc L-FCFW}. Thus this theorem is valid.
\end{proof}

\section{Duality between OSM and L-FCFW (Section \ref{sec:convergence})}
\label{app:duality_osm}

\begin{theorem}
\label{thm:parallelism_osm}
If \(g\) is smooth and strongly convex and in \cref{alg:fcfw} we choose \(\mathcal B^{(i)} = \mathcal{V}^{(i-1)}\),
then
\ben
\item The primal iterates $x^{(i)}$ of \cref{alg:osm} and \cref{alg:fcfw} match.
\item The set  \(\mathcal{V}^{(i)}\) used at each iteration of \cref{alg:osm} and \cref{alg:fcfw} match.
\item The upper and lower bounds $p^{(i)}$ and $d^{(i)}$ of \cref{alg:osm} and \cref{alg:fcfw} match.
\een
\end{theorem}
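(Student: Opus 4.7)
The plan is to run an induction on $i$ that mirrors the proof of Theorem \ref{thm:parallelism} but is actually simpler because no pruning occurs. I first observe that the choice $\mathcal{B}^{(i)} = \mathcal{V}^{(i-1)}$ trivially satisfies the requirement $\mathcal{B}^{(i)} \supseteq \{w \in \mathcal{V}^{(i-1)} : \lambda_w > 0\}$ in \cref{alg:fcfw}, and under this choice the L-FCFW memory update reduces to $\mathcal{V}^{(i)} = \mathcal{V}^{(i-1)} \cup \{v^{(i)}\}$, which is exactly the OSM rule in \cref{alg:osm}. So claim 2 of the theorem will follow immediately once claims 1 and the matching of $v^{(i)}$ are established at each iteration.

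For the base case $i=1$, both algorithms are initialized with the same $\mathcal{V}^{(0)}$. By \cref{lemma:uniqueness}, OSM's primal subproblem has a unique solution $x^{(1)}$. Since $g$ is smooth, $g^{*}$ is strongly convex by \cref{lemma:smoothness_convexity}, so L-FCFW's dual subproblem $\max\{-g^{*}(-w): w \in \conv(\mathcal{V}^{(0)})\}$ also has a unique solution $w^{(1)}$. \cref{thm:primal-from-dual} then identifies these via $x^{(1)} = \nabla g^{*}(-w^{(1)}) = \nabla h(w^{(1)})$, which is exactly the gradient L-FCFW computes in its submodular subproblem. Hence both algorithms proceed to solve $\max_{w \in B(F)} w^{\top}x^{(1)}$ with the same $x^{(1)}$, yielding the same $v^{(1)}$, and consequently the same $\mathcal{V}^{(1)}$.

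For the inductive step, assume $\mathcal{V}^{(i-1)}$ matches across the two algorithms. Then the OSM primal subproblem and the L-FCFW dual subproblem are a primal-dual pair over a common vertex set, as developed in \cref{s:duality}. Strong duality (\cref{thm:strong-duality-subproblems}), combined with the uniqueness of both the primal minimizer (by strong convexity of $g$) and the dual maximizer (by strong convexity of $g^{*}$), together with \cref{thm:primal-from-dual}, yields a unique matched pair $(x^{(i)}, w^{(i)})$ with $x^{(i)} = \nabla h(w^{(i)})$. Both algorithms thus perform the identical linear maximization over $B(F)$ in their submodular subproblems and produce the same $v^{(i)}$, closing the induction on claims 1 and 2. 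For claim 3, first-order optimality of $w^{(i)}$ in the strongly concave maximization of $h$ over $\conv(\mathcal{V}^{(i-1)})$ gives $(v - w^{(i)})^{\top}\nabla h(w^{(i)}) \leq 0$ for every $v \in \conv(\mathcal{V}^{(i-1)})$, so $(w^{(i)})^{\top}x^{(i)} = f_{(i)}(x^{(i)})$; combined with the Fenchel identity $h(w^{(i)}) = g(x^{(i)}) + (w^{(i)})^{\top}x^{(i)}$ whenever $-w^{(i)} \in \partial g(x^{(i)})$, the OSM and L-FCFW values of $d^{(i)}$ coincide up to the Fenchel correspondence, and the duality gap $p^{(i)} - d^{(i)} = (v^{(i)} - w^{(i)})^{\top}x^{(i)}$ agrees in both algorithms.

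The main obstacle is the same as in \cref{thm:parallelism}, namely transporting primal iterates to dual iterates via \cref{thm:primal-from-dual} on every iteration; however, the absence of pruning in OSM and the corresponding choice $\mathcal{B}^{(i)} = \mathcal{V}^{(i-1)}$ in L-FCFW makes the bookkeeping considerably lighter, since we do not need the analogue of \cref{lemma:equality} to argue that restriction to an active set preserves the subproblem solution. In particular, the proof of \cref{thm:parallelism_osm} goes through verbatim from the corresponding steps of the L-KM/L-FCFW proof after replacing each instance of $\mathcal{A}^{(i)}$ by $\mathcal{V}^{(i-1)}$.
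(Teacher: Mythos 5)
Your proposal is correct and follows the same route the paper intends: the paper simply states that the proof of \cref{thm:parallelism_osm} is ``similar to the proof of \cref{thm:parallelism},'' and your induction is exactly that adaptation, using \cref{lemma:uniqueness}, \cref{lemma:smoothness_convexity}, \cref{thm:strong-duality-subproblems}, and \cref{thm:primal-from-dual} to match $(x^{(i)}, w^{(i)})$ and hence $v^{(i)}$ and $\mathcal{V}^{(i)}$ at each step. Your added observations --- that $\mathcal{B}^{(i)} = \mathcal{V}^{(i-1)}$ removes the need for the active-set bookkeeping (the analogue of \cref{lemma:equality}), and that the bounds match via $(w^{(i)})^{\top}x^{(i)} = f_{(i)}(x^{(i)})$ from first-order optimality --- are accurate and in fact spell out details the paper leaves implicit.
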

The proof of \cref{thm:parallelism_osm} is similar to the proof of \cref{thm:parallelism}.

\section{Definition of Diameter and Pyramid Width}
\label{app:p_width}
\textbf{Diameter.} The diameter of a set \(\mathcal{P} \subseteq \mathbb{R}^{n}\) is defined as \begin{equation}
\mathop{\mathrm{Diam}}(\mathcal{P}) \overset{\Delta}{=} \max_{v,\ w \mathop{\in} \mathcal{P}} \|v-w\|_2.
\end{equation}

\textbf{Directional Width.} Given a direction \(x \in \mathbb{R}^{n}\), the directional width of a set \(\mathcal{P} \subseteq \mathbb{R}^{n}\) with respect to \(x\) is defined as
\begin{equation}
\mathop{\mathrm{dirW}}(\mathcal{P},\ x) \overset{\Delta}{=} \max_{v,\ w \mathop{\in} \mathcal{P}} {(v - w)}^{\top}\frac{x}{\|x\|_2}.
\end{equation}

Pyramid directional width and pyramid width are defined by Lacoste-Julien and Jaggi in \cite{Lacoste2015} for a finite sets of vectors \(\mathcal{V} \subseteq \mathbb{R}^{n}\). Here we extend the definition of pyramid width to a polytope \(\mathcal{P} = \mathop{\mathrm{conv}}(V)\), and it should be easy to see that the two definitions are essentially the same.

\textbf{Pyramid Directional Width.} Let \(\mathcal{V} \subseteq \mathbb{R}^{n}\) be a finite set of vectors in \(\mathbb{R}^{n}\). The pyramid directional width of \(\mathcal{V}\) with respect to a direction \(x\) and a base point \(w \in \mathop{\mathrm{conv}}(\mathcal{V})\) is defined as
\begin{equation}
\mathop{\mathrm{PdirW}}(\mathcal{V},\ x,\ w) \overset{\Delta}{=} \min_{A \mathop{\in} \mathcal{A}(w)}\mathop{\mathrm{dirW}}(A \cup \{\mathop{v}(\mathcal{V},\ x)\},\ x),
\end{equation}
where \(\mathcal{A}(w) \overset{\Delta}{=} \{A \subseteq \mathcal{V}\mid \text{the convex multipliers are non-zero for all}\ v \in A\ \text{in the decomposition of}\ w\}\) and \(\mathop{v}(\mathcal{V},\ x)\) is a vector in \(\arg\max_{v \in \mathcal{V}} v^{\top}x\). The pyramid directional width got its name because the set \(A \cup \{\mathop{v}(\mathcal{V},\ x)\}\) has the shape of a pyramid with \(A\) being the base and \(v(\mathcal{V},\ x)\) being the summit.

\textbf{Pyramid Width.}
The pyramid width of \(\mathcal{P}\) is defined as
\begin{equation}
\mathop{\mathrm{PWidth}}(\mathcal{P}) \overset{\Delta}{=} \min_{\mathcal{K} \mathop{\in} \mathop{\mathrm{face}(\mathcal{P})}}\min_{x \mathop{\in} \mathop{\mathrm{cone}}(\mathcal{K}-w) \setminus \{0\},\ w \in {\mathcal{K}}}\mathop{\mathrm{PdirW}}(\mathcal{K}\cap \mathop{\mathrm{vert}(\mathcal{P})},\ x,\ w),
\end{equation}
where \(\mathop{\mathrm{face}(\mathcal{P})}\) stands for the faces of \(\mathcal{P}\) and \(\mathop{\mathrm{cone}}(\mathcal{K}-w)\) is equivalent to the set of vectors pointing inwards \(\mathcal{K}\).

\end{document}


\maketitle

\section{A detailed example}
Here we include some equations and theorem-like environments to show
how these are labeled in a supplement and can be referenced from the
main text.
Consider the following equation:
\begin{equation}
  \label{eq:suppa}
  a^2 + b^2 = c^2.
\end{equation}
You can also reference equations such as \cref{eq:matrices,eq:bb} 
from the main article in this supplement.

\lipsum[100-101]

\begin{theorem}
  An example theorem.
\end{theorem}

\lipsum[102]
 
\begin{lemma}
  An example lemma.
\end{lemma}

\lipsum[103-105]

Here is an example citation: \cite{KoMa14}.

\section[Proof of Thm]{Proof of \cref{thm:bigthm}}
\label{sec:proof}

\lipsum[106-112]

\section{Additional experimental results}
\Cref{tab:foo} shows additional
supporting evidence. 

\begin{table}[htbp]
{\footnotesize
  \caption{Example table}  \label{tab:foo}
\begin{center}
  \begin{tabular}{|c|c|c|} \hline
   Species & \bf Mean & \bf Std.~Dev. \\ \hline
    1 & 3.4 & 1.2 \\
    2 & 5.4 & 0.6 \\ \hline
  \end{tabular}
\end{center}
}
\end{table}

\bibliographystyle{siamplain}
\bibliography{references}